\newcommand{\blind}{1}
\theoremstyle{plain}
\newtheorem{theorem}{Theorem}[section]
\newtheorem{lemma}{Lemma}[section]
\newtheorem{cor}{Corollary}[section]
\theoremstyle{remark}
\newtheorem{assume}{Assumption}[section]
\theoremstyle{definition}
\newcommand{\bfm}[1]{\ensuremath{\mathbf{#1}}}
\def\ba{\bfm a}     \def\bA{\bfm A}          
\def\bb{\bfm b}
     \def\bE\\
\def\cE{{\cal  E}}
\def\bh{\bfm h}     \def\bH{\bfm H}          
     \def\bI{\bfm I}          
          \def\cJ{{\cal  J}}     
          \def\cL{{\cal  L}}     
          \def\cN{{\cal  N}}     
     \def\bP{\bfm P}     \def\cP{{\cal  P}}
          \def\cS{{\cal  S}}     
          \def\cT{{\cal  T}}     
\def\bu{{\bfm u}}               
\def\bv{\bfm v}     \def\bV{\bfm V}          
\def\bx{\bfm x}     \def\bX{\bfm X}          
\def\by{\bfm y}               
\def\bz{\bfm z}               
\def\bzero{\bfm 0}
\newcommand{\bfsym}[1]{\ensuremath{\boldsymbol{#1}}}
       \def \bbeta    {\bfsym{\beta}}
\def \bgamma   {\bfsym{\gamma}}       
\def \bepsilon {\bfsym{\epsilon}}     
         \def \btheta   {\bfsym{\theta}}
\def \blambda  {\bfsym{\lambda}}      \def \bmu      {\bfsym{\mu}}
\def \bGamma   {\bfsym{\Gamma}}
\def \bSigma   {\bfsym{\Sigma}}
\DeclareMathOperator*{\argmin}{argmin}
\DeclareMathOperator*{\argmax}{argmax}
\DeclareMathOperator{\var}{var}
\DeclareMathOperator{\cov}{cov}
\DeclareMathOperator{\fdp}{FDP}
\DeclareMathOperator{\tpp}{TPP}
\newcommand{\E}{\mathbb{E}}
\DeclareMathOperator{\supp}{supp}
\DeclareMathOperator \col {\mathrm{col}}
\def \bss{\mathrm{best}}
\def \bone   {\bfsym{1}}
\def \AA {\mathbb{A}}
\def \RR	{\mathbb{R}}
\def \PP {\mathbb{P}}
\def \SS {\mathbb{S}}
\def \NN {\mathbb{N}}
\def \EE {\mathbb{E}}
\def \mU {\mathcal{U}}
\def \mV {\mathcal{V}}
\def \mS {\mathcal{S}}
\def \mN {\mathcal{N}}
\def \mL {\mathcal{L}}
\def \mW {\mathcal{W}}
\def \mT {\mathcal{T}}
\def \mG {\mathcal{G}}
\def \fdr {\mathrm{FDR}}
\def \psm {\mathfrak{m}}
\def \wh{\widehat}
\def \wt{\widetilde}
\newcommand{\beq}  {\begin{equation}}
\newcommand{\eeq}  {\end{equation}}
\newcommand{\beqn} {\begin{eqnarray}}
\newcommand{\eeqn} {\end{eqnarray}}
\newcommand{\beqnn}{\begin{eqnarray*}}
\newcommand{\eeqnn}{\end{eqnarray*}}
\newcommand{\lzeronorm}[1]{\lVert#1\rVert_0}
\newcommand{\lonenorm}[1]{\lVert#1\rVert_1}
\newcommand{\ltwonorm}[1]{\lVert#1\rVert_2}
\begin{document}

\if1\blind
{
  \title{\bf On sure early selection of the best subset}
  \author{Ziwei Zhu, Shihao Wu \thanks{
    The authors gratefully acknowledge \textit{NSF DMS 2015366} for the support for this work.}\hspace{.2cm}\\
    Department of Statistics, University of Michigan\\}
  \maketitle
} \fi

\if0\blind
{
  \bigskip
  \bigskip
  \bigskip
  \begin{center}
    {\LARGE\bf On sure early selection of the best subset}
 \end{center}
  \medskip
} \fi
%\bibliographystyle{natbib}

%\def\spacingset#1{\renewcommand{\baselinestretch}%
%{#1}\small\normalsize} \spacingset{1}

%%%%%%%%%%%%%%%%%%%%%%%%%%%%%%%%%%%%%%%%%%%%%%%%%%%%%%%%%%%%%%%%%%%%%%%%%%%%%%

%{
%  \bigskip
%  \bigskip
%  \bigskip
%  \begin{center}
%    {\LARGE\bf Title}
%\end{center}
%  \medskip
%} 

\begin{abstract}
The early solution path, which tracks the first few variables that enter the model of a selection procedure, is of profound importance to scientific discoveries. 
In practice, it is often statistically hopeless to identify all the important features with no false discovery, let alone the intimidating expense of experiments to test their significance.
Such realistic limitation calls for statistical guarantee for the early discoveries of a model selector.
In this paper, we focus on the early solution path of best subset selection (BSS), where the sparsity constraint is set to be lower than {the true sparsity}.
Under a sparse high-dimensional linear model, we establish the sufficient and (near) necessary condition for BSS to achieve sure early selection, or equivalently, zero false discovery throughout its early path.
Essentially, this condition boils down to a lower bound of the \emph{minimum projected signal margin} that characterizes the gap of the captured signal strength between sure selection models and those with spurious discoveries.
Defined through projection operators, this margin is independent of the restricted eigenvalues of the design, suggesting the robustness of BSS against collinearity.
Moreover, our model selection guarantee tolerates reasonable optimization error and thus applies to \emph{near} best subsets. 
Finally, to overcome the computational hurdle of BSS under high dimension,  we propose the ``screen then select'' (STS) strategy to reduce dimension for BSS. Our numerical experiments show that the resulting early path exhibits much lower false discovery rate (FDR) than LASSO, MCP and SCAD, especially in the presence of highly correlated design. We also investigate the early paths of the iterative hard thresholding algorithms, which are greedy computational surrogates for BSS, and which yield comparable FDR as our STS procedure. 
\end{abstract}

\noindent%
{\it Keywords:}  Sure Early Selection, Best Subset Selection, 
 False Discovery Rate,
 Solution Path,
% Compressive Sampling Matching Pursuit,
 Sure Screening.

%\vfill

%\spacingset{1.9} % DON'T change the spacing!
\section{Introduction}

High dimensional sparse linear models have been receiving intense theoretical investigation and widely applied in the big data era.
Suppose we have $n$ independent and identically distributed (i.i.d.) observations $\{(\bx_i,y_i)\}_{i=1}^n$ of $(\bx, Y)$ that follows the linear model:
\beq
\label{equ:linear model}
    Y = \bx^\top\bbeta^* + \epsilon. 
\eeq
Here $\bx$ is a design vector valued in $\RR ^ p$, $\bbeta^{\ast}\in\RR^p$ is an unknown sparse coefficient vector, and $\epsilon$ is random noise independent of $\bx$. Write $\bX = (\bx_1, \bx_2, \ldots, \bx_n) ^ \top$, $\by = (y_1, \ldots, y_n) ^ {\top}$ and $\bepsilon = (\epsilon_1, \ldots, \epsilon_n) ^ {\top}$. Then in matrix form, we have that 
\beq
\label{eq:lm_matrix}
\by = \bX\bbeta ^ * + \bepsilon. 
\eeq 

A central problem for high dimensional sparse linear models is the variable selection problem, i.e., to estimate the true support $\cS^*$ of $\bbeta ^ *$. Write $s ^ *  = |\cS ^ *|$. Given a target sparsity $s$, which is not necessarily equal to $s ^ *$, the best subset selection (BSS) solves 
\beq
	\label{eq:best_subset}
	\wh \bbeta^{\bss} (s):= \argmin_{\bbeta \in \RR^p, \|\bbeta\|_0 \le s} \sum_{i = 1} ^ n (y_i -\bx_i ^ \top \bbeta) ^ 2. 
\eeq
In words, BSS seeks for the size-$s$ subset of the available variables that achieves the minimum $L_2$ error of fitting $\by$. Another related type of subset regression approaches, which emerged in the 1970s, is the $\ell_0$-regularized approach, exemplified by Mallow's $C_p$ \cite{Mal73}, Akaike Information Criterion (AIC) \cite{akaike1974new, akaike1998information} and Bayesian Information Criterion (BIC) \citep{schwarz1978estimating}. Instead of directly constraining $\lzeronorm{\bbeta}$, these approaches penalize the loss function by a regularization term that is proportional to $\lzeronorm{\bbeta}$, which can be viewed as an indicator of model complexity.  
%Such $\ell_0$-regularized loss functions provide convenient gauges for model comparison and thus lead to effective model selection.
Nevertheless,  both the $\ell_0$-constrained and $\ell_0$-regularized methods are notorious for their NP-hardness \citep{natarajan1995sparse} and are thus computationally infeasible under high dimensions. 

For a long period of time, the computational barrier of BSS has been shifting attention away from its exact discrete formulation to its surrogate forms that are amenable to polynomial algorithms. The past three decades or so have witnessed a flurry of profound works on this respect, giving rise to a myriad of variable selection methods with both statistical accuracy and computational efficiency, particularly in high-dimensional regimes. A partial list of them include LASSO \citep{tibshirani1996regression, chen1998atomic}, SCAD \citep{fan2001variable,fan2004nonconcave, loh2013regularized, loh2017support, fan2018lamm}, elastic net \citep{zou2005regularization}, adaptive LASSO \citep{zou2006adaptive}, MCP \citep{zhang2010nearly}  and so forth. The shared spirit of these approaches is to substitute the $\ell_0$-regularization with a surrogate penalty of model complexity as follows: 
\begin{equation}\label{equ_penalizedsol}
	\wh{\bbeta}^{\text{pen}}:=\argmin_{\bbeta\in\RR^p}\sum_{i = 1} ^ n (y_i -\bx_i ^ \top \bbeta) ^ 2 + \rho_{\blambda}(\bbeta), 
\end{equation} 
where $\rho_{\blambda}(\bbeta)$, parameterized by $\blambda$, is a regularizer that encourages parsimony. All these methods are backed up by solid guarantee of model consistency. For example, \cite{zhao2006model} established the well-known irrepresentable conditions for model consistency of LASSO under fixed designs.
%\cite{ZY06} established the sufficient and nearly necessary conditions for model consistency of the LASSO estimator $\hat\bbeta^{\mathrm{lasso}}$. 
%One crucial condition there is the famous irrepresentable condition: there exists a constant $\eta > 0$ such that
%\begin{equation}
%	\label{eq:irrepresentable}
%	\left\lVert \wh\bSigma_{(\cS^*)^c\cS^*} (\wh\bSigma_{\cS^*\cS^*})^{-1}\mathrm{sign}(\bbeta^*_{\cS^*})\right\rVert_\infty\leq 1-\eta,
%\end{equation}
%where $\wh\bSigma_{\cS^*\cS^*}$ is the sample covariance of $\bX_{\cS^*}$, and where $\wh\bSigma_{(\cS^*)^c\cS^*}$ is the sample covariance between $\bX_{(\cS^*)^c}$ and $\bX_{\cS^*}$. One sufficient condition to deduce \eqref{eq:irrepresentable} is that the $\ell_1$-norm of the regression coefficient vector obtained by regressing any spurious covariate on the true covariates is bounded by $1 - \eta$, which is quite restrictive for practical applications.  
%\cite{MBu06} and under random design \cite{MBu06} and 
\cite{zhang2010nearly} showed that MCP achieves model consistency when the design satisfies a sparse Riesz condition and the minimum signal strength is not too weak.
\cite{fan2011nonconcave} proposed an iterative local adaptive majorize-minimization (I-LAMM) algorithm for general empirical risk minimization with folded concave penalty (e.g., SCAD) and showed that only a \emph{local} Riesz condition suffices to ensure model consistency. 
A recent work \cite{hazimeh2020fast} considered a hybrid of $L_0$ and $L_2$ regularization for problem \eqref{equ_penalizedsol}, which we refer to as L0L2, to pursue sparsity, robustness and computational efficiency. Specifically, they choose $\rho_{\lambda, \gamma}(\bbeta) = \lambda \|\bbeta\|_0 + \gamma \|\bbeta\|_2 ^ 2$ in \eqref{equ_penalizedsol}. They developed fast algorithms for this class of problems based on coordinate descent and local combinatorial search, which exhibited outstanding numerical performance among the state-of-the-art sparse learning algorithms in terms of prediction, estimation and exact recovery probability.

Unlike the previous works focusing on exact support recovery, this paper studies the selection behavior of BSS when the true sparsity is underestimated. It is motivated by the practical situations where exact model recovery is almost always hopeless, in particular in the presence of weak signals.  In Section \ref{sec:early_path}, we introduce the concept of the early solution path, through which we can assess the accuracy of the first few discoveries of a selection procedure. Our goal is to pursue sure early selection, meaning zero false early selection, which we believe as a realistic and desirable goal in modern practice. Besides, given that the exact BSS is often intractable, we accommodate certain optimization error in our statistical analysis. Section \ref{sec:algo_bss} reviews and proposes several approximate algorithms for BSS, which turn out to exhibit superior FDR-TPR tradeoff over penalized methods on their early solution paths in the numerical experiments. Finally, we summarize the major contributions of the paper in Section \ref{sec:contribution}.

\subsection{The early solution path}\label{sec:early_path}

For any two subsets $\cS_1, \cS_2 \subset [p]$, let $\cS_1 \backslash \cS_2$ denote $\cS_1 \cap \cS_2 ^ c$.
Then for any estimate $\wh\cS$ of the true model $\cS ^ *$, the false discovery proportion (FDP) and true positive proportion (TPP) are defined as
\beq
\label{equ:fdptpp}
\fdp(\wh\cS) := \frac{|\wh \cS \backslash \cS ^ {\ast}|}{\max(|\wh\cS|, 1)}\quad\text{and} \quad \text{TPP}(\wh\cS) := \frac{|\wh\cS\cap \cS^*|}{|\cS^*|}. 
\eeq
The false discovery rate (FDR) and true positive rate (TPR) are defined as the expectations of FDP and TPP respectively. In the context of multiple hypothesis testing, FDR and $1 - \mathrm{TPR}$ are essentially type I and type II errors respectively.

A solution path provides a comprehensive view of a model selection procedure: it displays tradeoff between type I and type II errors of the selected models as the regularization {parameter $\lambda$ in \eqref{equ_penalizedsol}} or the model size {$s$ in \eqref{eq:best_subset}} varies. In contrast, model consistency requires oracular knowledge of sparsity, which is often unavailable in practice. Moreover, model consistency is often too ambitious a goal to achieve in real-world problems, where the true sparsity or signal strength rarely satisfies the theoretical requirement. Therefore, the tradeoff between type I and type II error is inevitable, rendering the solution path a more meaningful evaluation criteria for comparing variable selectors. In particular, the early solution path, which tracks the first few selected variables, is of interest to scientific research; after all, it is impossible in practice to assess the causality of too many variables by experiments. Therefore, it is imperative to provide statistical guarantee, say FDR, for the early solution path to guide the subsequent scientific effort.

Formally, define the early solution path of BSS as the set $\bigl\{\wh\bbeta^{\bss}(s)\bigr\}_{s \le  s ^ *}$, i.e., the BSS estimators whose sparsity is not greater than $s ^ *$. In this paper, we explicitly characterize when BSS achieves zero false discovery throughout the early solution path, which we refer to as \emph{sure early selection}.
%There are two recent works \cite{su2017false} and \cite{FGZ20} on LASSO and BSS respectively that are closely related with our work.
Regarding related works on solution paths, 
\cite{su2017false} showed that under the regime of linear sparsity, i.e., $s ^ * / p$ tends to a constant, even when the features are independent, false discoveries occur early on the LASSO path with high probability, regardless of the signal strength.
\cite{wang2020complete} further provided a complete FDR-TPR tradeoff diagram of LASSO.  \cite{su2018first} investigated when the first false variable is selected by sequential regression procedures, which include forward stepwise, the LASSO, and least angle regression. Su's setup shares similar flavor with this paper, while the theoretical results therein are based on i.i.d Gaussian design. We instead work with fixed designs with possible collinearity. More details are in Section \ref{sec_ss}.

\subsection{Algorithmic development for BSS}
\label{sec:algo_bss}

% Besides the difference in theoretical assumptions, remarkable advancement in computing hardware and optimization has made possible the numerical comparison between BSS and its surrogate methods under practical setups, thereby inspiring renewal of interest in BSS. 

Recent advancement in computing hardware and optimization algorithms has made possible the implementation of BSS for real-world problems. 
\cite{bertsimas2016best} recast the BSS problem \eqref{eq:best_subset} as a mixed integer optimization (MIO) problem and showed that for $n$, $p$ in thousands, a MIO algorithm implemented on optimization softwares such as Gurobi can achieve certified optimality within minutes. \cite{bertsimas2020sparse} devised a new cutting plane method that solves to provable optimality the Tikhonov-regularized \cite{Tik43} BSS problem with $n, p$ in the $100,000$s. A more recent work \cite{ZWZ20} proposed an iterative splicing method called ABESS, short for adaptive best subset selection, to solve the BSS problem. They showed that ABESS enjoys both statistical accuracy and polynomial computational complexity when the design satisfies the sparse Riesz conditon and the minimum signal strength is of order $\Omega\{(s^* \log p \log \log n / n) ^ {1 / 2}\}$. 

Regarding the statistical performance in numerical experiments, \cite{bertsimas2016best} and \cite{bertsimas2020sparse} demonstrated that BSS enjoys higher predictive power and lower false discovery rate (FDR) than LASSO. \cite{ZWZ20} presented similar numerical results of ABESS and also showed that ABESS is able to estimate the model sparsity more accurately than LASSO, MCP and SCAD.  \cite{hastie2017extended} conducted extensive numerical experiments on comparison between LASSO, relaxed LASSO and BSS. They covered a wider lower range of signal-to-noise ratios (SNR) than \cite{bertsimas2016best}. The main message therein is that regarding prediction accuracy, LASSO outperforms BSS in the low SNR regime, while the situation is reversed in the high SNR regime. Furthermore, relaxed LASSO \citep{Mei07} is the overall winner, performing similarly or outperforming both LASSO and BSS in nearly all the cases in \cite{hastie2017extended}.

%choose CoSaMP (Compressive Sampling Matching Pursuit), an iterative two-stage hard thresholding algorithm proposed by \cite{NTr09}, to obtain an approximate solution path of BSS.  \cite{jain2014iterative} established a crucial optimization theorem that relates CoSaMP with BSS: given the true sparsity $s ^ *$, CoSaMP can find a model of size slightly larger than $s ^ *$ that achieves superior goodness of fit over the exact best size-$s$ subset within few iterations. 
%Based on this, a more recent work \cite{FGZ20} discovered that CoSaMP (referred to as IHT therein) can inherit sure screening properties from BSS. These positive results motivate us to use CoSaMP as an approximate solver for BSS.
\begin{figure}
    \centering
	\includegraphics[width=7cm]{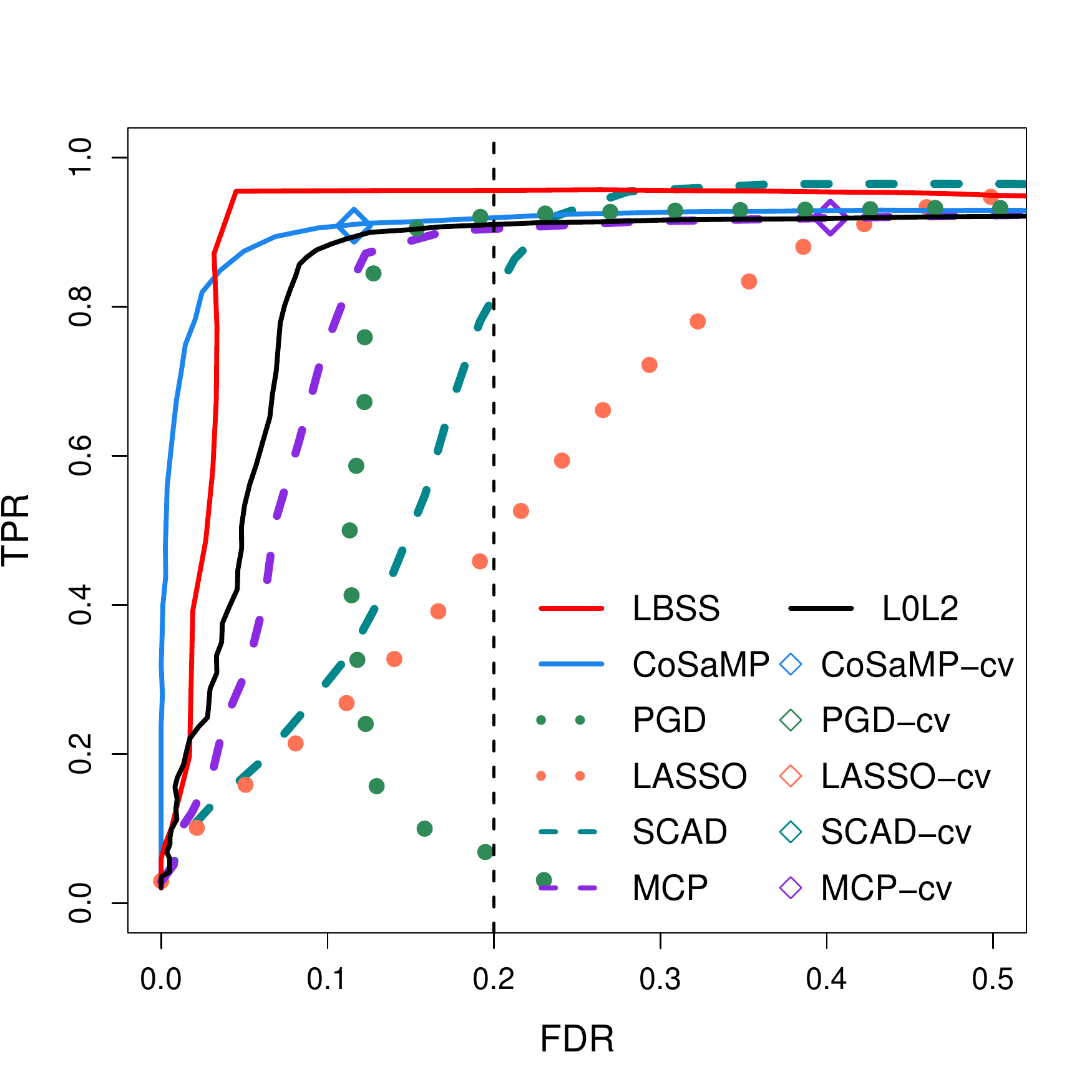}
	\caption{\footnotesize{The FDR-TPR paths of LBSS (red), CoSaMP (blue), L0L2 (black), PGD (dotted green), LASSO (orange), SCAD (dashed green) and MCP (violet) under autoregressive design ($\bx \sim \cN(\textbf{0}, \bSigma)$ with $\Sigma_{ij}=0.8^{|i-j|}$ for $i, j \in [p]$). This is one setup ($\rho = 0.8$ and $\sigma = 0.5$) in Figure \ref{fig:ar}. 
		%	We generate the true support set $\cS^*$ by randomly selecting a $s^*$-subset of $[p]$. The coefficient vector $\bbeta^*$ has $\beta^*_j=0$ for $j\in(\cS^*)^c$ and $\{(\beta_j^*/\beta_{\min}) -1 \}_{j\in\cS^*}\overset{\text{i.i.d.}}{\sim}\chi_1^2$ with $\beta_{\min}=0.1$. Rows of $\bX$ are i.i.d. generated from  We let $\by=\bX\bbeta^*+\bepsilon$ with $\bepsilon\sim\cN(\mathbf{0},0.5\bI)$. We compute the solution paths of SCAD, LASSO, MCP by choosing different $\lambda$-values and CoSaMP by specifying the projection size set (see Section \ref{sec_ipath}). 
		The FDR and TPR of each point on the solution path is based on averaging FDP and TPP of the solutions over $100$ independent Monte Carlo experiments. More implementation details are in Section \ref{sec:paths}.}}
	\label{fig_example}
\end{figure}

In this paper, we 
introduce a ``screen then select'' (STS) strategy to approximately solve BSS under high-dimensional sparse models and achieve sure early selection. Specifically, we first apply a computationally cheap model selector, say LASSO, to filter out massive useless variables. Then we run a BSS algorithm, say MIO, on the remaining variables to select the final model. The STS strategy is designed to accommodate the computational hardness of the BSS algorithms by first reducing the dimension for them. One particular STS-type algorithm that we investigate is LBSS (LASSO plus BSS), which uses LASSO to screen variables and MIO to select the model (more details are in Section \ref{sec:lbss}). To give some flavor of the statistical performance of LBSS, 
% The celebrated projected gradient descent (PGD) algorithm and CoSaMP algorithm \citep{NTr09} can also be applied to approximate the solution of BSS.
Figure \ref{fig_example} compares the FDR-TPR curves of the solution paths of LBSS, MCP, SCAD, LASSO, PGD and CoSaMP \citep{NTr09} under autoregressive design. There the $x$-axis represents FDR, and the y-axis represents TPR. Note that a perfect model selection procedure yields a ``$\Gamma$''-shaped FDR-TPR path, meaning that it selects no false variable until enforced to select more than $s ^ *$ variables. One can see from Figure \ref{fig_example} that the FDR-TPR paths of LBSS and CoSaMP are the closest to the perfect ``$\Gamma$''-shape, suggesting their superiority over the competing approaches in terms of FDR-TPR tradeoff. More comparisons of this type can be found in Section \ref{sec:paths}, and the overall winner is LBSS.

%Furthermore, the sure early selection property of BSS inspires us to incorporate BSS to FDR control procedures. 

\subsection{Major contributions}
\label{sec:contribution}
Below we summarize the major contributions of our work: 
\begin{itemize}
	\item[(1)] In Section \ref{sec_ss_suf}, we establish a sufficient condition for near best subsets to achieve sure early selection for any target sparsity $s \le s ^ *$ based on a quantity called \emph{minimum projected signal margin}. This margin 
	%is entirely independent of the RE parameter of the design 
	is insensitive to high collinearity of the design matrix and even accommodates degenerate covariance structure of the design. 
	
	\item[(2)] In Section \ref{sec_ss_nec}, we show that throughout the early path, i.e., for any $s < s ^ *$, the sufficient condition above is nearly necessary (up to a universal constant) for BSS to achieve sure early selection.
	
	\item[(3)] In Section \ref{sec_margin}, we explicitly derive a non-asymptotic bound of the minimum projected signal margin under random design to allow for verification of the established sufficient and necessary conditions. 
	
	\item[(4)] In Section \ref{sec:lbss}, we introduce the ``screen then select'' (STS) strategy to  efficiently solve 
	%(approximate)
	BSS. We show that BSS within a sure screening set can achive sure early selection. Our numerical experiments show that the early solution path of the STS strategy enjoys superior FDR-TPR tradeoff over competing approaches. 
%	\item[(5)] In Section \ref{sec:lock}, we propose a model selection procedure called LOCK (L-zerO Constrained feature screening + Knockoff) that numerically exhibits superior FDR control and TPR performance over competing FDR control procedures.
\end{itemize}
The proof for all the theorems and the technical lemmas are collected in the appendix.

\subsection{Notation}
We use regular letters, bold regular letters and bold capital letters to denote scalars, vectors and matrices respectively. Given any two sequences $\{a_n\}, \{b_n\}$ valued in $\RR$, we say $a_n \lesssim b_n$ or $b_n\gtrsim a_n$ if there exists a universal constant $C > 0$ such that $a_n \le Cb_n$. We say $a_n\asymp b_n $ if $a_n\lesssim b_n$ and $a_n\gtrsim b_n$.  For any positive integer $a$, we use $[a]$ to denote the set $\{1, 2, \ldots, a\}$. For any vector $\ba$ and matrix $\bA$, we use $\ba^{\top}$ and $\bA ^ {\top}$ to denote the transpose of $\ba$ and $\bA$ respectively. For any matrix $\bX \in \RR ^{n \times p}$, define the projection operator for its column space 
\[
	\bP_{\bX} := \bX(\bX ^ {\top} \bX) ^+\bX ^ {\top}, 
\]
where the superscript symbol $^+$ denotes the Moore-Penrose inverse. We use $\bX_{\cS}$ to denote the submatrix of $\bX$ with columns indexed by $\cS \subset [p]$. 
For any random variable $X$ valued in $\RR$, define
\[
\label{eq:psi_norm}
\|X\|_{\psi_2} := \inf\{t > 0: \E\exp(X ^ 2 / t ^ 2) \le 2\}~\text{and}~\|X\|_{\psi_1} := \inf\{t > 0: \E\exp(|X| / t) \le 2\}.
\]
For any random vector $\bx$ valued in $\RR ^ p$, define 
\[
\label{eq:psi_norm_vec}
\|\bx\|_{\psi_2} := \sup_{\ltwonorm{\bv} = 1}\|\bv ^{\top} \bx\|_{\psi_2}~\text{and}~\|\bx\|_{\psi_1} := \sup_{\ltwonorm{\bv} = 1}\|\bv ^{\top} \bx\|_{\psi_1}.
\]
For any estimator $\wh\bbeta$ of $\bbeta ^ *$, based on \eqref{equ:fdptpp}, we abuse the notation to define 
\[
	\fdp(\wh\bbeta) := \frac{|\mathrm{supp}(\wh\bbeta) \backslash \cS ^ {\ast}|}{\max(|\supp(\wh \bbeta)|, 1)} \quad\text{and} \quad \mathrm{TPP}(\wh\bbeta) := \frac{|\supp(\wh \bbeta)\cap \cS^*|}{|\cS^*|}.
\]

\section{On the early solution path of best subset selection}\label{sec_ss} 

Consider $s \le s ^ *$ in \eqref{eq:best_subset}.
Our focus in this section is to establish sufficient and necessary conditions for BSS to achieve \emph{sure early selection}, i.e., $\fdp\{\wh \bbeta ^{\bss}(s)\} = 0$.  To start with, we introduce a measure called the \emph{minimum projected signal margin} to characterize the fundamental difficulty for BSS to achieve sure early selection. 
Define 
\[
	\AA(s):=\{\mS: \mS\subset[p], |\mS|=s, \mS\setminus \mS^{\ast}\ne \emptyset \}~\text{and}~\AA^{\ast}(s) := \{\mS: \mS\subset\mS^{\ast}, |\mS|=s \}. 
\]
In words, $\AA(s)$ is the set of all size-$s$ models with at least one false variable, while $\AA ^ {\ast}(s)$ is the set of those without any false variable. We further define $\AA_t(s) := \{\cS \in \AA(s): |\cS \backslash \cS ^ *| = t\}$ for $t \in [s]$, which collects all the models in $\AA(s)$ with exactly $t$ false variables.  
For any $\cS_1, \cS_2 \subset [p]$, define the marginal projection operator 
\[
	\bP_{\cS_1 | \cS_2} := \bP_{\bX_{\cS_1}} - \bP_{\bX_{\cS_1 \cap \cS_2}}.
\] 
Note that since $\cS_1 \cap \cS_2 \subset \cS_1$, $\bP_{\cS_1 | \cS_2}$ is a projection operator: it captures the directions in the column space of $\bX_{\cS_1}$ that are orthogonal to the column space of $\bX_{\cS_1 \cap \cS_2}$, representing the margin of fitting power of $\bX_{\cS_1}$ on top of $\bX_{\cS_1 \cap \cS_2}$. 
Writing $\bmu^{\ast} = \bX\bbeta^{\ast}$, 
we define the \emph{optimal feature swap} $\Phi: \AA(s) \to \AA^ *(s)$ as
\begin{equation}
	\Phi(\mS) := \argmax_{\mS^\ddagger\in\AA^{\ast}(s), \cS\cap\cS^*\subset \cS^\ddagger }~\ltwonorm{\bP_{\bX_{\cS^\ddagger}} \bmu ^ *} ^ 2. 
\end{equation}
As the name indicates, $\Phi$ swaps the false variables in $\cS$ for the true ones that maximize the fitting power. 

We are now in position to define the projected signal margin of $\cS \in \AA(s)$ as 
\beq
	\label{eq:psm_def}
	\psm(\cS) := \frac{\ltwonorm{\bP_{\Phi(\cS)|\cS}\bmu^*}  - \ltwonorm{\bP_{\cS|\Phi(\cS)} \bmu^*} }{|\Phi(\cS)\setminus\cS|^{1/2}}
\eeq
and its minimum over $\AA(s)$ as
\[
\psm_*(s) := \min_{\mS\in\AA(s)} \psm(\cS).
\]
Intuitively, $\psm_*(s)$ represents the minimum gain of explanation power from the optimal feature swap, normalized by the square root of the number of false variables. The smaller $\psm_*(s)$, the easier for some $\cS \in \AA(s)$ to outperform $\Phi(\cS)$ in terms of goodness of fit, giving rise to potential false discoveries. Note that if $\beta^ *_j = b, \forall j \in [p]$ and $n ^ {-1} \bX^{\top}\bX = \bI_p$, i.e., homogeneous signal and orthonormal design, we have $\psm(\cS) = bn ^ {1 / 2}$ for any $\cS \in \AA(s)$, implying that $\psm_*(s) = bn ^ {1 / 2}$. Regarding general cases, we refer the readers to Section \ref{sec_margin}, where we establish non-asymptotic bounds for projected signal margin under random design. 
%\begin{enumerate}
%	\item[(i)] Orthogonal design: We assume that $n ^ {-1} \bX^{\top}\bX = \bI$. Given that $\bX$ has orthogonal columns, for any $\cS \in \AA_t(s)$, we have that $\psm(\cS) = b$, which implies that $\psm_*(s) = bn ^ {1 / 2}$.
%	\item[(ii)] Equi-correlated design: We assume that $n ^ {-1}\bX ^ {\top}\bX = \rho \bone\bone ^{\top} + (1 - \rho)\bI$. For any $\cS \in \AA_t(s)$, we have that 
%\end{enumerate}

In the sequel, we will see that $\psm_*(s)$ dictates the statistical behavior of the early path of BSS: the sufficient and necessary condition for BSS to achieve sure early selection essentially boils down to a lower bound of $\psm_*(s)$. Throughout this section, we assume that $s^{\ast} < n$ and that $\|\epsilon\|_{\psi_2} \le \sigma$ in \eqref{equ:linear model}.

\subsection{Sufficient conditions}\label{sec_ss_suf}

The goal of this subsection is to explicitly characterize a sufficient condition for $\wh\bbeta ^ {\bss}(s)$ to achieve zero false discovery.  Given the optimization challenge of obtaining $\wh\bbeta ^ {\bss}(s)$ exactly, we extend the scope of our statistical analysis to embrace all  \emph{near} best $s$-subsets, i.e., the size-$s$ subsets that yield comparable goodness of fit as the best subset. Specifically, for any $\cS \subset [p]$, let $\cL_{\cS} = \by^{\top}(\bI - \bP_{\bX_{\cS}})\by$ and $\cL_* = \min_{\cS \subset [p], |\cS| = s} \cL_\cS$. Given any tolerance level $\eta\in[0,1)$, consider the following collection of near best $s$-subsets:
\beq
	\label{eq:near_best}
	\mathbb{S}(s,\eta) := \{\cS: |\cS| = s, \cL_{\mS} \le \cL_* + \eta \psm_*^2(s) \}. 
\eeq
%\wsh{Cannot have $|\Phi(\cS) \setminus \cS|$ in the definition.} 
Here $\eta$ is a tolerance threshold that determines the condition for $\cS$ to be a near best $s$-subset: the larger $\eta$, the higher fitting error we can tolerate for a near best subset. Below we introduce a sufficient condition for all sets in $\SS(s,\eta)$ to achieve sure selection.  
\begin{theorem}
	\label{thm1}
	Suppose that $\log p \gtrsim s ^ *$. There exists a universal constant $C \ge 1 $, such that for any $\xi>C$ and $0 \le \eta < 1$, whenever
	\beq
		\label{eq:psm_suff}
		\psm_*(s) \ge \frac{8 \xi \sigma (\log p)^{1/2}}{1-\eta}, 
	\eeq	
	we have that 
	\beq
	       \label{ineq:thm1_result}
		\PP\Bigl\{\fdp(\wh \cS) = 0,~\forall \wh\mS \in \mathbb{S}(s,\eta) \Bigr\} \ge 1-Csp^{-2(C^{-2}\xi^2-1)}.
	\eeq
\end{theorem}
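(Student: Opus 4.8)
The plan is to show that, on a high-probability event, no size-$s$ model containing a false variable can be a near best subset; equivalently $\SS(s,\eta)\subseteq\AA^*(s)$, which forces $\fdp(\wh\cS)=0$ for every $\wh\cS\in\SS(s,\eta)$. Since $\Phi(\cS)$ is a feasible size-$s$ model, $\cL_*\le\cL_{\Phi(\cS)}$, so it suffices to establish the uniform statement
\[
\cL_\cS-\cL_{\Phi(\cS)}>\eta\,\psm_*^2(s),\qquad\forall\,\cS\in\AA(s),
\]
because it yields $\cL_\cS>\cL_*+\eta\,\psm_*^2(s)$, i.e. $\cS\notin\SS(s,\eta)$. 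First I would record the algebraic backbone. Writing $\cS_0:=\cS\cap\Phi(\cS)$, the swap constraint $\cS\cap\cS^*\subseteq\Phi(\cS)$ together with $\Phi(\cS)\subseteq\cS^*$ forces $\cS_0=\cS\cap\cS^*$, so both $\bP_{\bX_{\Phi(\cS)}}$ and $\bP_{\bX_\cS}$ decompose orthogonally over the common piece $\bP_{\bX_{\cS_0}}$ and the marginal projections $\bP_{\Phi(\cS)|\cS},\bP_{\cS|\Phi(\cS)}$. The common piece cancels, giving the exact identity
\[
\cL_\cS-\cL_{\Phi(\cS)}=\ltwonorm{\bP_{\Phi(\cS)|\cS}\by}^2-\ltwonorm{\bP_{\cS|\Phi(\cS)}\by}^2 .
\]

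Next I would substitute $\by=\bmu^*+\bepsilon$ and abbreviate $a:=\ltwonorm{\bP_{\Phi(\cS)|\cS}\bmu^*}$, $b:=\ltwonorm{\bP_{\cS|\Phi(\cS)}\bmu^*}$, $g_1:=\ltwonorm{\bP_{\Phi(\cS)|\cS}\bepsilon}$, $g_2:=\ltwonorm{\bP_{\cS|\Phi(\cS)}\bepsilon}$, and $t:=|\Phi(\cS)\setminus\cS|$, the number of false variables of $\cS$. Expanding both squared norms and bounding the two cross terms by Cauchy--Schwarz ($\langle\bP_{\Phi(\cS)|\cS}\bmu^*,\bepsilon\rangle\ge-ag_1$ and $\langle\bP_{\cS|\Phi(\cS)}\bmu^*,\bepsilon\rangle\le bg_2$) collapses everything into the clean deterministic inequality
\[
\cL_\cS-\cL_{\Phi(\cS)}\ge (a-g_1)^2-(b+g_2)^2 .
\]
By the definition of the margin, $a-b=t^{1/2}\psm(\cS)\ge t^{1/2}\psm_*(s)=:m>0$, and hence $a\ge m$. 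Factoring the right side as $[(a-g_1)-(b+g_2)]\,[(a-g_1)+(b+g_2)]$ and setting $\rho:=\max(g_1,g_2)/m$, I would show that whenever $\rho\le(1-\eta)/3$ the two factors are at least $(1-2\rho)m>0$ and $(1-\rho)m$, so that
\[
\cL_\cS-\cL_{\Phi(\cS)}\ge(1-2\rho)(1-\rho)\,m^2>\eta\,m^2\ge\eta\,\psm_*^2(s),
\]
using $(1-2\rho)(1-\rho)\ge 1-3\rho\ge\eta$ and $m^2=t\,\psm_*^2(s)\ge\psm_*^2(s)$. Thus the whole theorem reduces to one uniform high-probability guarantee that $\rho\le(1-\eta)/3$ for every $\cS\in\AA(s)$ simultaneously.

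The substantive step is this uniform control of the projected noise. For a fixed $\cS$ each of $g_1^2,g_2^2$ is a quadratic form $\bepsilon^\top\bP\bepsilon$ with $\bP$ an orthogonal projection of rank at most $t$, so Hanson--Wright (using $\|\epsilon_i\|_{\psi_2}\le\sigma$) gives $\PP\{\ltwonorm{\bP\bepsilon}^2>C_1\sigma^2(t+u)\}\le 2e^{-u}$ for a universal $C_1$. I would then union bound over $\AA_t(s)$, whose cardinality satisfies $|\AA_t(s)|=\binom{p-s^*}{t}\binom{s^*}{s-t}\le p^t2^{s^*}$; here the hypothesis $\log p\gtrsim s^*$ is exactly what absorbs the $2^{s^*}$ factor and yields $\log|\AA_t(s)|\lesssim t\log p$. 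Taking the deviation level proportional to $t\log p$, every $g_1,g_2$ with $\cS\in\AA_t(s)$ is simultaneously at most $K\sigma\sqrt{t\log p}$ on an event of failure probability $\lesssim p^{-2(C^{-2}\xi^2-1)}$ for each $t$; summing over $t\in[s]$ produces the factor $s$ in the bound. On this event, since $m\ge t^{1/2}\psm_*(s)\ge 8\xi\sigma\sqrt{t\log p}/(1-\eta)$, we obtain $\rho\le K(1-\eta)/(8\xi)$, and the constant $8$ is chosen precisely so that $K\le 8\xi/3$ forces $\rho\le(1-\eta)/3$, closing the loop with the deterministic step.

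The main obstacle is exactly this calibration: the threshold $K\sigma\sqrt{t\log p}$ must be large enough to beat the entropy $t\log p$ of $\AA_t(s)$ in the union bound, yet small enough relative to $m$ that $\rho\le(1-\eta)/3$. Reconciling the two fixes the universal constant $C$ in terms of the Hanson--Wright constant $C_1$ and the absorbed entropy, and it is this interplay that pins down the precise exponent $2(C^{-2}\xi^2-1)$. Everything else --- the orthogonal decomposition, the Cauchy--Schwarz collapse, and the factorization --- is deterministic bookkeeping. Once the uniform event holds we have $\AA(s)\cap\SS(s,\eta)=\emptyset$, so every near best $s$-subset is contained in $\cS^*$ and has zero false discovery, which is the assertion \eqref{ineq:thm1_result}.
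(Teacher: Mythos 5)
Your proposal is correct and reaches the same conclusion as the paper, but via a noticeably different organization of the noise. The paper expands $n^{-1}(\cL_\cS-\cL_{\Phi(\cS)})$ into a signal gap plus a \emph{linear} noise term $2n^{-1/2}(\bgamma_\cS-\bgamma_{\Phi(\cS)})^{\top}\bepsilon$ plus a \emph{quadratic} noise term $n^{-1}\bepsilon^{\top}(\bP_{\bX_\cS}-\bP_{\bX_{\Phi(\cS)}})\bepsilon$, splits the signal budget as $\eta+\tfrac{1-\eta}{2}+\tfrac{1-\eta}{2}$, and controls the two noise terms with two different tools: Hoeffding for the linear one and Hanson--Wright for the quadratic one, where it exploits the exact cancellation $\E\,\bepsilon^{\top}\bP_{\mU}\bepsilon=\E\,\bepsilon^{\top}\bP_{\mV}\bepsilon$ between the two rank-$t$ complements. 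You instead complete the square: after the (shared) identity $\cL_\cS-\cL_{\Phi(\cS)}=\ltwonorm{\bP_{\Phi(\cS)|\cS}\by}^2-\ltwonorm{\bP_{\cS|\Phi(\cS)}\by}^2$, Cauchy--Schwarz folds the cross terms into $(a-g_1)^2-(b+g_2)^2$, so the only stochastic input is uniform control of the norms of rank-$\le t$ projections of sub-Gaussian noise --- a single application of Hanson--Wright. What you give up is the mean cancellation: you pay $\E\ltwonorm{\bP\bepsilon}^2\asymp t\sigma^2$, but this is harmlessly dominated by the deviation level $\xi^2\sigma^2 t\log p$, so nothing is lost beyond constants. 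The entropy accounting ($|\AA_t(s)|\le p^t2^{s^*}$ with $2^{s^*}$ absorbed via $\log p\gtrsim s^*$), the factorization $(a+b)(a-b)$ linking the signal gap to the margin, and the calibration of $\xi$ against the exponent are the same as in the paper. Two small points to tighten: (i) your good event yields $(1-2\rho)(1-\rho)m^2\ge\eta m^2$ non-strictly, whereas excluding $\cS$ from $\SS(s,\eta)$ needs $\cL_\cS>\cL_*+\eta\,\psm_*^2(s)$; this is repaired for free since the concentration bounds are strict on the complement event (or take $\rho\le(1-\eta)/4$); (ii) your threshold $K\sigma\sqrt{t\log p}$ must carry a factor of $\xi$ for the exponent $2(C^{-2}\xi^2-1)$ to emerge, which your calibration discussion implicitly assumes and which is consistent with the requirement $K\le 8\xi/3$.
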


Theorem \ref{thm1} says that whenever the minimum projected signal margin $\psm_*(s)$ satisfies the lower bound \eqref{eq:psm_suff}, all the sets in $\SS(s,\eta)$ are sure selection sets with high probability. Consequently, we have $\tpp(\wh{\cS})=s/s^*$ for all  $\wh{\cS}\in\SS(s,\eta)$ with high probability. As the target sparsity $s$ increases and gets closer to the true sparsity $s^*$, the $\tpp$ guarantee grows higher and finally reaches $1$. Regarding  $\eta$, note that as per \eqref{eq:psm_suff}, the larger $\eta$ is, that is, the more relaxed requirement we impose on near best $s$-subsets, the higher $\psm_*(s)$ we need to guarantee sure selection; this is natural given that $\SS(s,\eta)$ grows as $\eta$ increases. Regarding $\xi$, by assessing its role on the right hand side of \eqref{eq:psm_suff} and \eqref{ineq:thm1_result} respectively, we can deduce that larger minimum projected signal margin leads to higher confidence in sure selection. 

One important message of this theorem is that the covariance structure of the design is non-essential in determining the early path of BSS. Through the marginal projection operators, Theorem \ref{thm1} unveils that it is the gap in capacity of capturing the true signal that determines if any false variable enters the early path of BSS. Given that projection matrices of the features are invariant with respect to linear transformations of them, the minimum projected signal margin is \emph{robust against collinearity}. For instance, given $\ba, \bb \in \RR ^ n$, consider a simple yet illustrative case where $\bX_{\cS ^ *} = \ba \bone^{\top}_{s ^ *}$ and  $\bX_{{\cS ^ *} ^ c} = \bb \bone_{p - s ^ *} ^ {\top}$, i.e, all the columns of $\bX_{\cS ^ *}$ equal $\ba$, and all the columns of $\bX_{{\cS ^ *} ^ c}$ equal $\bb$. This case violates both the irrepresentable condition and the sparse Riesz condition but can satisfy the minimum projected signal margin condition. By applying Theorem \ref{thm1}, one can deduce that as long as $\ba$ and $\bb$ are not too correlated and the signal is not too weak, BSS achieves sure early selection with high probability. Section \ref{sec:paths} presents a wide range of numerical results to demonstrate the robustness of BSS against design correlation.

{Now we discuss the difference between our work and a recent related work \cite{guo2020best} that studies the model consistency of BSS. The main theoretical innovation of our work given \cite{guo2020best} lies in defining the projected signal margin through the optimal feature swap. \cite{guo2020best} proposed the following ``identifiability margin'' to characterize the fundamental difficulty for BSS to achieve model consistency: 
\[
    \begin{aligned}
	    \tau_*(s ^ *) & := \min_{\cS \subset [p], |\cS| = s ^ *, \cS \neq \cS ^ *} \frac{{\bbeta ^ *} ^ {\top}_{\cS^*\setminus\cS}\bigl(\wh\bSigma_{\cS^*\setminus\cS, \cS^*\setminus\cS}-\wh\bSigma_{\cS^*\setminus\cS, \cS}\wh\bSigma_{\cS \cS}^{-1}\wh\bSigma_{\cS, \cS^*\setminus\cS}\bigr) \bbeta_{\cS ^ *\setminus \cS} ^ *}{|\cS ^ * \setminus \cS|} \\
	    & = \min_{\cS \subset [p], |\cS| = s ^ *, \cS \neq \cS ^ *} \frac{\ltwonorm{\bmu ^ *} ^ 2 - \ltwonorm{\bP_{\cS} \bmu ^ *} ^ 2}{n|\cS ^ * \setminus \cS|}. 
	\end{aligned}
\]
The most essential difference between the identifiability margin and our projected signal margin is that for any set $\cS$ with false discoveries, the former compares the projected signal strength of $\cS$ with that of $\cS ^ *$, while the latter compares $\cS$ with its optimal swap set $\Phi(\cS)$. Note that to ensure BSS to recover $\cS ^ *$ with known sparsity $s ^ *$, naturally it suffices to impose a large gap of signal strength between $\cS ^ *$ and the rest of the models of size $s ^ *$. However, on the early path of BSS, there are multiple ($s ^ * \choose s$, more precisely) sure selection sets of size $s$, and it is non-trivial to identify which one to refer to to characterize the fundamental difficulty of achieving sure selection. Instead of fixing one sure selection set as the reference, we propose the optimal feature swap to pick the most \emph{similar and powerful} sure selection set $\Phi(\cS)$ for any set $\cS$ and compute their projected signal strength margin. This turns out to be the measure of fundamental difficulty for BSS to attain sure early selection (see Theorems \ref{thm2} and \ref{thm3}).
}

%  \cite{FGZ20} showed that $\tau_*(s ^ *) \gtrsim \log p / n$ is sufficient and almost necessary for BSS to achieve model consistency, implying that $\tau_*(s ^ *)$ is a tight difficulty indicator for BSS to identify the true model. our technical innovation   

Another difference between the two margins $\tau_*$ and $m_*$ is that the numerator of $m_*$ is the difference between the $\ell_2$-norms of projected signals instead of the \emph{squared} $\ell_2$-norms of them in $\tau_*$. 
% One can deduce that the following bound 
% It is also instructive to compare our sufficient condition \eqref{eq:psm_suff} and that for model consistency of BSS when $s = s ^ *$. 
Because of this difference, when $s = s ^ *$, $m_*$ is narrower than $\tau_*$ after normalization: 
\[
	\begin{aligned}
		m ^ 2_*(s ^ *) & = \min_{\cS \in \AA(s ^ *)} \frac{\bigl(\ltwonorm{\bP_{\cS ^ * | \cS}\bmu ^ *} - \ltwonorm{\bP_{\cS | \cS ^ *}\bmu ^ *}\bigr) ^ 2}{|\cS \backslash \cS ^ *|} \le \min_{\cS \in \AA(s ^ *)} \frac{\ltwonorm{\bP_{\cS ^ * | \cS}\bmu ^ *} ^ 2 - \ltwonorm{\bP_{\cS | \cS ^ *}\bmu ^ *} ^ 2}{|\cS \backslash \cS ^ *|} \\
		& = \min_{\cS \in \AA(s ^ *)}\frac{\ltwonorm{\bmu ^ *} ^ 2 - \ltwonorm{\bP_{\cS} \bmu ^ *} ^ 2}{|\cS \backslash \cS ^ *|} = n\tau_*(s ^ *), 
	\end{aligned}
\]
where the inequality is due to the fact that $\ltwonorm{\bP_{\cS ^ * | \cS} \bmu ^ *} \ge \ltwonorm{\bP_{\cS | \cS ^ *} \bmu ^ *}$. Therefore, $m_*(s ^ *) \gtrsim \sigma (\log p) ^ {1 / 2}$ as imposed in \eqref{eq:psm_suff} is a stronger condition than $\tau_*(s ^ *) \gtrsim \sigma ^ 2\log p / n$, which was shown by \cite{guo2020best} as a sufficient and near necessary condition for BSS to recover the true support. As we shall see in Section \ref{sec_ss_nec}, this stronger form of margin is not due to technical limitation but rather the greater difficulty of achieving sure early selection than model consistency. The early solution path underestimates the true sparsity $s ^ *$ and thus requires wider margin of signal strength to compensate for model misspecification and identify true variables. 

% \zzw{The concept of the optimal feature swap is new. For any set $\cS$ that involves false discovery, it explicitly gives the sure selection set that is the most similar to $\cS$ and that yields the highest explanation power. To prove Theorem \ref{thm1},  we need to find a sure selection set that outperforms $\cS$ in terms of the fitting error. Then the challenge here is to pick the  multiple models of $s$ with zero false discovery; in contrast, when $s ^ *$ is correctly specified, the true model is unique. 
% non-asymptotic derivation of the projected signal margin}

Now we discuss a series of literature on the solution path of variable selectors under high dimension:  \cite{su2017false}, {\cite{su2018first} and \cite{wang2020complete}. \cite{su2017false} studied the early solution path of LASSO. Define the LASSO estimator $\wh\bbeta ^ {\mathrm{lasso}}(\lambda)$ as
\[
	\wh\bbeta ^ {\mathrm{lasso}}(\lambda) := \argmin_{\bbeta \in \RR ^ p} \frac{1}{2}\ltwonorm{\by - \bX\bbeta} ^ 2 + \lambda \lonenorm{\bbeta}. 
\]
Theorem 2.1 of \cite{su2017false} says that regardless of $\sigma > 0$, for any arbitrary small constants $\lambda_0 > 0$ and $\eta > 0$,  as $n, s ^ *, p \to \infty$ with $n / p$ and $s ^ * / p$ tend to constants, the probability of the event
\[
	\bigcap_{\lambda \ge \lambda_0} \biggl\{\fdp\Bigl(\wh\bbeta ^ {\mathrm{lasso}}(\lambda)\Bigr) \ge q ^ *\Bigl(\mathrm{TPP}(\wh\bbeta ^ {\mathrm{lasso}}(\lambda))\Bigr) - \eta\biggr\}
\]
tends to one, where $q ^ *$ is a strictly increasing function with $q ^ *(0) = 0$. We refer the readers to \cite{su2017false} for specific definition and visual illustration of $q ^ *$. Simply speaking, this implies that false discoveries occur early on the solution path of LASSO however strong the signal is. While it is tempting to conclude theoretical superiority of BSS over LASSO on the early solution path, we emphasize that our Theorem \ref{thm1} is actually not directly comparable with Theorem 2.1 of \cite{su2017false}, given that we mainly focus on the ultra-high dimensional setup, where $\log p \gtrsim s ^ *$, while \cite{su2017false} focused on linear sparsity regimes. Nevertheless, the simulation experiments in Section \ref{sec:paths} clearly demonstrate the numerical superiority of BSS over LASSO in terms of FDR and TPR on their early solution paths. A follow-up work
\citep{wang2020complete} characterized the complete LASSO tradeoff diagram, 
%by defining the feasible region of  LASSO for $n/p\to \delta$ and $s^*/p \to \epsilon $\{need some discussion here\} as $\cD_{\epsilon,\delta}:= \{(\tpp,\fdp):  0\le \tpp \le 1, 0 \le \fdp \le 1-\epsilon, \fdp \ge q^{*}(\tpp), \frac{\epsilon}{\delta} \tpp + \fdp \le 1\}   $. The diagram is complete in the sense that (1) any LASSO tradeoff curve lies inside $\cD_{\epsilon,\delta}$ and (2) any point in $\cD_{\epsilon,\delta}$ is achievable by LASSO, both asymptotically. This diagram 
which shows not only a lower bound on $\fdr$ but also an upper bound on TPR. This further explores the limitation of LASSO in terms of FDR-TPR tradeoff.  Finally, \cite{su2017false} considered the $L_0$-penalized estimator:
\[
   \wh{\bbeta}_{L_0} = \argmin_{\bbeta\in\RR^p}\| \by - \bX \bbeta\|_2^2 + \lambda 
   \|\bbeta\|_0
\]
and shows that under the linear sparsity regime with the signal strength going to infinity, $\wh{\bbeta}_{L_0}$ can achieve $(\fdp,\tpp)=(0,1)$ asymptotically with a proper choice of $\lambda$. Note that this result revolves around exact model recovery and does not characterize the early solution path. It thus remains open how the early solution path of BSS behaves under the linear sparsity setup and how that is compared with LASSO. 
% \wsh{maybe not mention this, we can say that `A fair theoretical comparison between LASSO and BSS on the early solution path remains open.'} 
  
\cite{su2018first} considered the rank of the first false variable selected by sequential regression procedures including forward stepwise regression, LASSO, and the least angle regression. 
% The problem of interest therein shares the same flavor as sure early selection. 
It assumes that $\bX\in\RR^{n\times p}$ has independent $\cN(0,1/n)$ entries, $\bepsilon\sim\cN(0,\sigma^2\bI)$ and nonzero components in $\bbeta^*$ are equal to some $M\ne 0$. It focuses on the regime of near \emph{linear sparsity} in the sense that $c_1p/(\log p)^{c_2} \le n \le c_3 p$ and that $c_4n\le s^* \le \min\{0.99p, c_5n\log^{0.99}p \}$ for arbitrary positive constants $\{c_i\}_{i \in [5]}$.   Let $T$ denote the rank of the first false variable selected by any of the aforementioned three sequential regression methods. \cite{su2018first} showed that as long as $\sigma/M\to 0$, $\log T = \{1 + o_{\PP}(1)\}\bigl[ \{2n(\log p)/s^*\} ^ {1 / 2}  - n/(2s^*) + \log \{n/(2p\log p)\}\bigr]$. When $n \asymp p \asymp s^*$, it means that each of the sequential methods includes the first false variable after no more than $O\bigl( \exp\{ (\log s^*)^{1/2} \} \bigr)$ steps asymptotically, thereby failing to achieve sure early selection. {In our work, we show that BSS can achieve sure early solution with high probability whenever the projected signal margin exceeds the lower bound as per \eqref{eq:psm_suff}. Although we consider the ultra-high dimensional regime, which is far from the linear sparsity regime and makes our results not directly comparable with the results in \cite{su2018first}, we emphasize that our theory does not require statistical independence between the features and accommodates nearly degenerate designs, while the results in \cite{su2018first} rely on independent Gaussian designs. }}

Beyond the guarantee for absolutely sure selection as in Theorem \ref{thm1}, we can extend our argument to provide any pre-specified  level of FDP guarantee through adjusting the definition of the projected signal margin. Given any $0< q < 1$, consider the following variant of the minimum projected signal margin that corresponds to $q$-level FDP: 
\[ 
	\psm_q(s) := \min_{\cS\in\cup_{t \ge \lceil qs\rceil}\AA_t(s)} \psm(\cS),
\]
as well as the corresponding collection of near best $s$-subsets: 
\[
	\mathbb{S}_q(s,\eta) := \{\cS: |\cS| = s, \cL_{\mS} \le \cL_* + \eta \psm_q^2(s) \}. 
\]
Now we are ready to present the following corollary on $q$-level FDP guarantee. 

\begin{cor}
	\label{cor1}
	Suppose that $\log p \gtrsim s ^ *$. There exists a universal constant $C \ge 1$, such that for any $\xi > C$ and $0\le\eta<1$, if
	\beq
	\label{eq:q_psm}
	\psm_q(s) \ge \frac{8 \xi \sigma (\log p)^{1/2}}{1-\eta}, 
	\eeq	
	then we obtain 
	\beq
		\PP\Bigl\{\fdp(\wh \cS) \le q,~\forall \wh\mS \in \mathbb{S}_q(s,\eta) \Bigr\} \ge 1 - Csp^{-2(C^{-2}\xi^2-1)}. 
	\eeq
\end{cor}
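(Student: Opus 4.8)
The plan is to mirror the proof of Theorem~\ref{thm1}, restricting attention to those size-$s$ models that carry enough false variables to violate the $q$-level guarantee, and replacing $\psm_*(s)$ by $\psm_q(s)$ throughout. The first step is to translate the FDP statement into a combinatorial one. Since every $\wh\cS\in\SS_q(s,\eta)$ has $|\wh\cS|=s$, we have $\fdp(\wh\cS)=|\wh\cS\setminus\cS^*|/s$, so $\fdp(\wh\cS)>q$ forces $|\wh\cS\setminus\cS^*|\ge\lceil qs\rceil$, i.e. $\wh\cS\in\cB:=\cup_{t\ge\lceil qs\rceil}\AA_t(s)$. Hence it suffices to show that, with probability at least $1-Csp^{-2(C^{-2}\xi^2-1)}$, no member of $\cB$ is a near best subset, namely $\cL_\cS>\cL_*+\eta\psm_q^2(s)$ for every $\cS\in\cB$; on that event $\SS_q(s,\eta)\cap\cB=\emptyset$, which is exactly the claim.

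For a fixed $\cS\in\cB$ with $t:=|\cS\setminus\cS^*|\ge\lceil qs\rceil$, I would compare it against its optimal swap $\cS^\ddagger:=\Phi(\cS)\in\AA^*(s)$, which satisfies $\cL_{\cS^\ddagger}\ge\cL_*$ because it is a valid size-$s$ subset. Using $\cS\cap\cS^\ddagger=\cS\cap\cS^*$ (so that $|\cS^\ddagger\setminus\cS|=t$) and writing $\by=\bmu^*+\bepsilon$, the loss gap telescopes through the common projection $\bP_{\bX_{\cS\cap\cS^*}}$ and then splits by the (reverse) triangle inequality applied to the two rank-$\le t$ marginal projections:
\[
	\cL_\cS-\cL_{\cS^\ddagger}=\ltwonorm{\bP_{\cS^\ddagger|\cS}\by}^2-\ltwonorm{\bP_{\cS|\cS^\ddagger}\by}^2\ge\bigl(a-e_1\bigr)^2-\bigl(b+e_2\bigr)^2,
\]
where $a=\ltwonorm{\bP_{\cS^\ddagger|\cS}\bmu^*}$, $b=\ltwonorm{\bP_{\cS|\cS^\ddagger}\bmu^*}$, $e_1=\ltwonorm{\bP_{\cS^\ddagger|\cS}\bepsilon}$, and $e_2=\ltwonorm{\bP_{\cS|\cS^\ddagger}\bepsilon}$. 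The definition of the projected signal margin gives the crucial identity $a-b=\sqrt t\,\psm(\cS)\ge\sqrt t\,\psm_q(s)$, together with the trivial $a+b\ge a-b$.

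The crux is to control $e_1,e_2$ uniformly over $\cB$. Since the two marginal projections have rank at most $t$, each of $e_1^2,e_2^2$ is a sub-exponential quadratic form in $\bepsilon$ with at most $t$ degrees of freedom, so a single set fails the bound $e_1,e_2\le\xi\sigma\sqrt{t\log p}$ with probability $\le p^{-c\xi^2 t}$. Because a model in $\AA_t(s)$ is pinned down by its $s-t$ true and $t$ false coordinates, $\log|\AA_t(s)|\le\log\binom{s^*}{s-t}+\log\binom{p-s^*}{t}\lesssim t\log p$ under $\log p\gtrsim s^*$, so the union bound over $\AA_t(s)$ costs only a factor $p^{ct}$, and summing the residual $p^{-(c'\xi^2-c)t}$ over the at most $s$ relevant values $t\ge\lceil qs\rceil$ reproduces the advertised $Csp^{-2(C^{-2}\xi^2-1)}$. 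On the resulting event, expanding the previous display and dropping the nonnegative $e_1^2$ term gives
\[
	\cL_\cS-\cL_{\cS^\ddagger}\ge(a+b)\bigl[(a-b)-2\xi\sigma\sqrt{t\log p}\,\bigr]-\xi^2\sigma^2 t\log p\ge t\Bigl[\tfrac{3+\eta}{4}\,\psm_q^2(s)-\xi^2\sigma^2\log p\Bigr],
\]
where I used $a+b\ge\sqrt t\,\psm_q(s)$, $a-b=\sqrt t\,\psm(\cS)\ge\sqrt t\,\psm_q(s)$, and the margin hypothesis $\psm_q(s)\ge 8\xi\sigma(\log p)^{1/2}/(1-\eta)$ to absorb $2\xi\sigma\sqrt{\log p}\le\tfrac{1-\eta}{4}\psm_q(s)$. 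The same hypothesis gives $\xi^2\sigma^2\log p\le(1-\eta)^2\psm_q^2(s)/64$, and since $\tfrac{3+\eta}{4}-\eta=\tfrac{3(1-\eta)}{4}>(1-\eta)^2/64$ for every $\eta\in[0,1)$, the bracket exceeds $\eta\,\psm_q^2(s)$; hence $\cL_\cS>\cL_{\cS^\ddagger}+\eta\psm_q^2(s)\ge\cL_*+\eta\psm_q^2(s)$, as required.

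I expect the main obstacle to lie in the uniform noise control of the third step: forcing the projected-noise norms below $\xi\sigma\sqrt{t\log p}$ simultaneously across the exponentially many models in each $\AA_t(s)$, which is where the rank-$t$ structure, the cardinality bound $\log|\AA_t(s)|\lesssim t\log p$, and the $\xi$-calibrated threshold must be balanced to recover the exponent $2(C^{-2}\xi^2-1)$. This is precisely the concentration machinery already underlying Theorem~\ref{thm1}; indeed the whole argument is that proof with $\AA(s)$ narrowed to $\cB$ and $\psm_*(s)$ replaced by $\psm_q(s)$, so no genuinely new estimate is needed beyond the elementary FDP-to-false-count reduction of the first step.
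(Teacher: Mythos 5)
Your proposal is correct and follows essentially the same route as the paper, whose entire proof of Corollary~\ref{cor1} is the one-line remark that the argument of Theorem~\ref{thm1} goes through verbatim with $\AA(s)$ replaced by $\cup_{t\ge\lceil qs\rceil}\AA_t(s)$ and $\psm_*(s)$ by $\psm_q(s)$; your opening reduction from $\fdp(\wh\cS)\le q$ to excluding that subcollection is exactly the implicit content of that remark. The only cosmetic difference is in how you rerun the Theorem~\ref{thm1} machinery --- you bound $\ltwonorm{\bP_{\cS^\ddagger|\cS}\by}$ and $\ltwonorm{\bP_{\cS|\cS^\ddagger}\by}$ by the triangle inequality and control the rank-$t$ projected-noise norms, whereas the paper splits the loss gap into a linear term (Hoeffding) and a quadratic term (Hanson--Wright) --- but both rest on the same concentration estimates and the same $|\AA_t(s)|\lesssim p^{t+1}$ union bound, so this is a minor variant rather than a different proof.
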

{Corollary \ref{cor1} is a variant of Theorem \ref{thm1} by replacing $\AA(s)$ in Theorem \ref{thm1} with $\cup_{t \ge \lceil qs\rceil}\AA_t(s)$.}

\subsection{Necessary conditions}\label{sec_ss_nec}

In this section, we show that the lower bound \eqref{eq:psm_suff} of the minimum projected signal margin is almost necessary (up to a universal constant) for BSS to achieve sure early selection in general setups (without the assumption that $\log p \gtrsim s^*$). We investigate the necessity in two scenarios seperately: (i) $s = 1$; (ii) $s \ge 2$. 

\subsubsection{Sure first selection} 
\label{sec:sure_first}
We start with the simplest case: $s=1$, i.e., we aim to find only one true predictor. 
Let
\[
    j^{\dagger} := \argmax_{j\in\cS^*}\|\bP_{\bX_j} \bmu ^ *\|_2^2, 
\]
which denotes the best variable in $\cS^*$ in terms of fitting the signal $\bmu^*$. Then for any $j\in(\cS ^*)^c$, $\Phi(\{j\})=\{j^\dagger\}$ 
 and 
\[
	\psm(\{j\}) = \|\bP_{\bX_{j^{\dagger}}}\bmu^*\|_2- \|\bP_{\bX_j}\bmu^*\|_2. 
\]
We say a set $\cP$ is a $\delta$-packing set if and only if for any $\bu, \bv \in \cP$, $\ltwonorm{\bu - \bv} \ge \delta$. Consider the following two assumptions on the problem setup:
\begin{assume}
       \label{ass:packing1} Let $\bar{\bu}_j=\bX_j/\|\bX_j\|_2$ for $j\in[p]\setminus \cS^*$. For some $0<\delta_0<1$, there exists an  index set $\cJ_{\delta_0} \subset (\cS ^ *) ^ c$ of delusive features satisfying:
       \begin{enumerate}
       		\item[(i)] $\{\bar{\bu}_j\}_{j\in\cJ_{\delta_0}}$ is a $\delta_0$-packing set;
       		\item[(ii)] $|\cJ_{\delta_0}| \ge p ^ {c_{\delta_0}}$ for some $0 < c_{\delta_0} < 1$ depending on $\delta_0$ such that $\delta_0 ^ 2c_{\delta_0} \log p > 1$;        		
       		\item[(iii)] Each feature in $\cJ_{\delta_0}$ has small projected signal margin, i.e., 
       		\beq 
       		\label{eq:psm_nec1}
       		0 < 
		\psm(\{j\}) < \frac{\delta_0\sigma( c_{\delta_0}\log p) ^ {1 / 2}}{20}, \forall j \in \cJ_{\delta_0}. 
       		\eeq
       	\end{enumerate}
\end{assume}
\begin{assume}
       \label{ass:best_var} 
       There exists a universal constant $\xi > 8$ such that
       \[
       		\|\bP_{\bX_{j^{\dagger}}}\bmu^*\|_2 \ge \xi\delta_0\sigma (c_{\delta_0}\log p) ^ {1 / 2}.
       \]
\end{assume}
The main idea of Assumption \ref{ass:packing1} is that we can find sufficiently many delusive features with small projected signal margin as per \eqref{eq:psm_nec1}. Furthermore, perceiving Euclidean distance between two vectors within $\{\bar \bu_{j}\}_{j \in [p]}$  as the correlation proxy between the corresponding features, condition (ii) requires the delusive variables to be dissimilar to each other, ensuring that the impact of the dimension is nontrivial. Assumption \ref{ass:best_var} says that the best single variable has sufficient explanation power for the true signal $\bmu ^ *$. This assumption is standard: one can deduce from the well-known $\beta$-$\min$ condition that $\sigma (\log p) ^ {1 / 2}$ is the minimum signal strength we need for $\ltwonorm{\bP_{\bX_{j ^ \dagger}} \bmu ^ *}$ to identify $j ^ {\dagger}$ as a true predictor.

Now we present our theorem on the necessary condition for the very first selection of BSS to be true. 

\begin{theorem}
	\label{thm2}
	Suppose that $\{\epsilon_i \}_{i\in[n]} $ are independent Gaussian noise with variance $\sigma ^ 2$. Under Assumptions \ref{ass:packing1} and \ref{ass:best_var}, we have that
	\[
		\PP\biggl(\min_{j \in \cJ_{\delta_0}} \cL_{\{j\}} < \min_{j ^ * \in \cS ^ *}\cL_{\{ j ^ {*}\}}\biggr) \ge 1 - 4s^ *p ^ {-(c_1\xi \delta ^ 2_0 - 1)c_{\delta_0}} - s ^ *p ^ {- c_1\delta_0 ^ 2 c_{\delta_0}}, 
	\]
	where $c_1$ is a universal constant. 
\end{theorem}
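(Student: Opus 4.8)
The plan is to turn the loss comparison into a comparison of rank-one fits. For a single index $j$ the projection $\bP_{\bX_j} = \bar{\bu}_j\bar{\bu}_j^\top$ with $\bar{\bu}_j = \bX_j/\ltwonorm{\bX_j}$ is rank one, so $\cL_{\{j\}} = \by^\top(\bI - \bP_{\bX_j})\by = \ltwonorm{\by}^2 - (\bar{\bu}_j^\top\by)^2$. Hence the target event $\min_{j\in\cJ_{\delta_0}}\cL_{\{j\}} < \min_{j^*\in\cS^*}\cL_{\{j^*\}}$ is exactly $\max_{j\in\cJ_{\delta_0}}(\bar{\bu}_j^\top\by)^2 > \max_{j^*\in\cS^*}(\bar{\bu}_{j^*}^\top\by)^2$. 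Orienting each $\bar{\bu}_j$ so that $\bar{\bu}_j^\top\bmu^* = \ltwonorm{\bP_{\bX_j}\bmu^*}\ge 0$, I would decompose $\bar{\bu}_j^\top\by = \ltwonorm{\bP_{\bX_j}\bmu^*} + g_j$ with $g_j := \bar{\bu}_j^\top\bepsilon \sim \cN(0,\sigma^2)$, and reduce the goal to lower bounding $\max_{j\in\cJ_{\delta_0}}\bar{\bu}_j^\top\by$ and upper bounding $\max_{j^*\in\cS^*}|\bar{\bu}_{j^*}^\top\by|$.

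Next come the two easy bounds. On the true side, $\max_{j^*}|\bar{\bu}_{j^*}^\top\by| \le \ltwonorm{\bP_{\bX_{j^\dagger}}\bmu^*} + \max_{j^*}|g_{j^*}|$ by the triangle inequality and the definition of $j^{\dagger}$, and $\max_{j^*}|g_{j^*}|$ is controlled by a union bound over the $s^*$ true indices against a Gaussian tail at level $\asymp \sigma\delta_0(c_{\delta_0}\log p)^{1/2}$, producing the term $s^*p^{-c_1\delta_0^2 c_{\delta_0}}$. On the delusive side, Assumption \ref{ass:packing1}(iii) gives $\psm(\{j\}) < \delta_0\sigma(c_{\delta_0}\log p)^{1/2}/20$, hence $\ltwonorm{\bP_{\bX_j}\bmu^*} \ge \ltwonorm{\bP_{\bX_{j^\dagger}}\bmu^*} - \delta_0\sigma(c_{\delta_0}\log p)^{1/2}/20$ for every $j\in\cJ_{\delta_0}$, so $\max_{j\in\cJ_{\delta_0}}\bar{\bu}_j^\top\by \ge \ltwonorm{\bP_{\bX_{j^\dagger}}\bmu^*} - \delta_0\sigma(c_{\delta_0}\log p)^{1/2}/20 + \max_{j\in\cJ_{\delta_0}}g_j$. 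The common term $\ltwonorm{\bP_{\bX_{j^\dagger}}\bmu^*}$ cancels when the two sides are compared, leaving the single inequality $\max_{j\in\cJ_{\delta_0}}g_j > \delta_0\sigma(c_{\delta_0}\log p)^{1/2}/20 + \max_{j^*}|g_{j^*}|$ to be established. Assumption \ref{ass:best_var} enters only to guarantee both fits are positive, so that the comparison of squares is equivalent to the comparison of signed fits, and to calibrate the tail exponent $(c_1\xi\delta_0^2-1)c_{\delta_0}$ of the first probability term.

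The heart of the argument is a lower bound on $\max_{j\in\cJ_{\delta_0}}g_j$ for the \emph{correlated} Gaussian vector $\{g_j\}_{j\in\cJ_{\delta_0}}$, and this is exactly where the packing assumption does its work. The canonical metric of this Gaussian process is $d(i,j) = \|g_i - g_j\|_{L^2} = \sigma\ltwonorm{\bar{\bu}_i - \bar{\bu}_j}$, so a $\delta_0$-packing of $\{\bar{\bu}_j\}$ in Euclidean norm is a $\sigma\delta_0$-packing in the canonical metric. Sudakov minoration then yields $\E\max_{j\in\cJ_{\delta_0}}g_j \gtrsim \sigma\delta_0(\log|\cJ_{\delta_0}|)^{1/2}\gtrsim \sigma\delta_0(c_{\delta_0}\log p)^{1/2}$ via $|\cJ_{\delta_0}|\ge p^{c_{\delta_0}}$, with the condition $\delta_0^2 c_{\delta_0}\log p > 1$ ensuring this floor is of nontrivial order. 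I would then upgrade this expectation to a high-probability statement through the Borell--TIS inequality, $\PP(\max_j g_j < \E\max_j g_j - t)\le\exp(-t^2/(2\sigma^2))$, which applies because $\bepsilon\mapsto\max_j \bar{\bu}_j^\top\bepsilon$ is $1$-Lipschitz and $\bepsilon\sim\cN(0,\sigma^2\bI)$.

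Finally, on the intersection of these events the leftover inequality holds because the Sudakov floor $\asymp \sigma\delta_0(c_{\delta_0}\log p)^{1/2}$ dominates both the margin term (smaller by the factor $1/20$) and the true-side noise $\max_{j^*}|g_{j^*}|$, the latter comparison being the point at which the regime $\log s^* \lesssim \delta_0^2 c_{\delta_0}\log p$ implicit in the stated probability is used; a union bound over the $s^*$ true indices together with the two Gaussian tail/concentration estimates assembles the advertised failure probability. The main obstacle is precisely the lower bound on $\max_{j\in\cJ_{\delta_0}}g_j$: since the delusive directions are only guaranteed to be $\delta_0$-separated (correlations as large as $1-\delta_0^2/2$), independence is unavailable and any naive second-moment lower bound is too weak, so the packing condition is what licenses Sudakov minoration; the delicate part is then carrying the universal Sudakov and Borell--TIS constants through so that the floor still beats both the margin and the true-side fluctuations with the constants quoted in the statement.
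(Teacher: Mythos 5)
Your route is genuinely different from the paper's. The paper never works with the signed fits $\bar{\bu}_j^{\top}\by$: it expands $n^{-1}(\cL_{\{j\}}-\cL_{\{j^*\}})$ into a signal-gap term, a linear noise term $2n^{-1/2}(\bgamma_{\{j\}}-\bgamma_{\{j^*\}})^{\top}\bepsilon$, and a quadratic noise term $n^{-1}\bepsilon^{\top}(\bP_{\bX_j}-\bP_{\bX_{j^*}})\bepsilon$, applies Sudakov plus Gaussian sup-concentration to the linear term over the packing set $\{\bgamma_{\{j\}}-\bgamma_{\{j^*\}}\}_{j\in\cJ_{\delta_0}}$ (whose packing radius is obtained from Assumption~\ref{ass:packing1}(i) via Lemma~6.1 of \cite{guo2020best} and requires Assumption~\ref{ass:best_var} to keep $|\bar{\bu}_j^{\top}\bmu^*|$ comparable to $|\bar{\bu}_{j^\dagger}^{\top}\bmu^*|$), and controls the quadratic term by Hanson--Wright, which is where the $\xi$-dependent probability term originates. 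Your rank-one reduction eliminates the quadratic term entirely and makes the signal $\ltwonorm{\bP_{\bX_{j^\dagger}}\bmu^*}$ cancel exactly, which is cleaner and demotes Assumption~\ref{ass:best_var} to a bit part; this is a real simplification if it goes through.

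There is, however, one concrete gap. Assumption~\ref{ass:packing1}(i) asserts that $\{\bar{\bu}_j\}_{j\in\cJ_{\delta_0}}$ with the \emph{fixed} orientation $\bar{\bu}_j=\bX_j/\ltwonorm{\bX_j}$ is a $\delta_0$-packing. After you re-orient each $\bar{\bu}_j$ so that $\bar{\bu}_j^{\top}\bmu^*\ge 0$, i.e.\ replace $\bar{\bu}_j$ by $s_j\bar{\bu}_j$ with $s_j\in\{\pm 1\}$, the packing can collapse: two indices with $\ltwonorm{\bar{\bu}_i-\bar{\bu}_j}\ge\delta_0$ but $s_i\neq s_j$ have re-oriented distance $\ltwonorm{\bar{\bu}_i+\bar{\bu}_j}$, which can be arbitrarily small (nearly antipodal columns). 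Since Sudakov minoration is applied to the process $g_j=s_j\bar{\bu}_j^{\top}\bepsilon$, whose canonical metric is exactly $\sigma\ltwonorm{s_i\bar{\bu}_i-s_j\bar{\bu}_j}$, the lower bound $\E\max_j g_j\gtrsim\sigma\delta_0(c_{\delta_0}\log p)^{1/2}$ does not follow from the stated assumption as written. The fix is cheap but must be said: restrict to the majority-sign subset $\cJ'=\{j\in\cJ_{\delta_0}:s_j=+1\}$ (or its complement), which preserves the $\delta_0$-packing and has $\log|\cJ'|\ge c_{\delta_0}\log p-\log 2\ge(1-\log 2)\,c_{\delta_0}\log p$ using $\delta_0^2c_{\delta_0}\log p>1$; alternatively, work with the sign-invariant vectors $\bar{\bu}_j\bar{\bu}_j^{\top}\bmu^*$ as the paper does. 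Either way you lose a constant in the Sudakov floor, and since the whole argument is a race of explicit constants ($1/2$ from Sudakov against the $1/20$ margin, the Borell--TIS downward fluctuation, and the true-side tail $\max_{j^*}|g_{j^*}|$, all at scale $\sigma\delta_0(c_{\delta_0}\log p)^{1/2}$), the bookkeeping you defer is not optional --- it is the same tightrope the paper walks when it checks $1/10+1/4<2^{-3/2}$. Once the orientation issue is repaired and the constants are carried through (which they can be, with room to spare after choosing $c_1$ small), your argument delivers a failure probability of the form $Cs^*p^{-c\delta_0^2c_{\delta_0}}$, which implies the stated bound for a suitable universal $c_1$.
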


Given Theorem \ref{thm2}, one can deduce by comparing \eqref{eq:psm_suff} and \eqref{eq:psm_nec1} that the lower bound of the minimum projected signal margin in Theorem \ref{thm1} is almost tight: whenever there are sufficiently many spurious features whose projected signal margins violate this lower bound up to a multiplicative constant, the first shot of BSS is false with high probability. 

\subsubsection{Sure early multi-selection}
Now we move on to the general case when $s\ge 2$. 
Define $\mS ^ {\dagger}(s)$ to be the best size-$s$ subset of $\cS ^ *$ in terms of fitting the signal $\bmu ^ *$, i.e., $\mS^{\dagger}(s) := \argmax_{\mS\in\AA^{\ast}(s)}~\ltwonorm{\bP_{\bX_\cS} \bmu ^ *} ^ 2. $
For simplicity, we write $\mS^{\dagger}(s)$ as $\mS^\dagger$ in the sequel. 
Consider the following two assumptions on the problem setup: 
\begin{assume}
        \label{ass:packing2} There exist $j_0 \in \cS ^ \dagger$ and $0 < \delta_0 < 1$ such that if we let $\cS ^ {\dagger}_{0} = \cS ^{\dagger} \backslash \{j_0\}$, $\wt\bu_j:=(\bI-\bP_{\bX_{\mS_0^{\dagger}}})\bX_j$ and $\bar{\bu}_j=\wt\bu_j/ \|\wt\bu_j\|_2$ for $j\in[p]\setminus \mS^{\ast}$, we can find an index set $\cJ_{\delta_0} \subset (\cS ^ *) ^ c$ of delusive features satisfying:
       \begin{enumerate}
       		\item[(i)] $\{\bar{\bu}_j\}_{j\in\cJ_{\delta_0}}$ is a $\delta_0$-packing set;
       		\item[(ii)] $|\cJ_{\delta_0}| \ge p ^ {c_{\delta_0}}$ for some $0 < c_{\delta_0} < 1$ depending on $\delta_0$; 
       		\item[(iii)] Consider all the models of size $s$ that are formed by replacing $j_0$ in $\cS ^ {\dagger}$ with a spurious variable in $\cJ_{\delta_0}$, namely, $\AA_{j_0}:= \bigl\{\cS ^ {\dagger}_0 \cup \{j\}: j \in \cJ_{\delta_0}\bigr\}$. Then each set in $\AA_{j_0}$ has small projected signal margin, i.e.,   
	        \beq
		\label{eq:psm_nec2}
		0 <\psm(\cS)< \frac{\delta_0\sigma(c_{\delta_0} \log p) ^ {1 / 2}}{20}, \forall \cS \in \AA_{j_0}. 
	        \eeq
	\end{enumerate}
\end{assume}

\begin{assume}
	\label{ass:residual_margin_s}	
	There exists a universal constant $ \xi >\max\{ 8 / (\delta_0c^{1 / 2}_{\delta_0} \log p), 2 \}$ such that
	\[
		 \min_{\mS\in\AA ^ *(s), \mS \neq \cS ^ {\dagger}} \frac{ \ltwonorm{\bP_{\cS ^ {\dagger} | \cS} \bmu ^ *} - \ltwonorm{\bP_{\cS | \cS ^ {\dagger}} \bmu ^ *}}{|\cS^{\dagger}\setminus\cS|^{1/2}} \ge \xi\sigma (\log p) ^ {1 / 2}.
	\]
\end{assume}

Assumption \ref{ass:packing2} is similar to Assumption \ref{ass:packing1}; the only difference is that in Assumption \ref{ass:packing2}, each delusive model is not a singleton but a set constructed by replacing $j_0$ in $\cS ^ {\dagger}$ with a spurious variable in $\cJ_{\delta_0}$. Assumption \ref{ass:residual_margin_s} solely involves the true support $\cS ^ *$ and signal $\bmu ^ *$. It guarantees that $\cS ^ {\dagger}$ significantly outperforms all the other size-$s$ sure selection sets in terms of fitting $\bmu ^ *$, thereby being the best $s$-subset within $\cS ^ *$ with high probability.
%stands out from the rest of the size-$s$ sure selection sets in terms of fitting $\bmu ^ *$, even in presence of noise $\epsilon$.
In this vein, if we can find a size-$s$ model  $\cS$ outside $\AA^ *(s)$ that fits $\by$ better than $\cS ^ {\dagger}$, BSS then favors $\cS$ and selects false variables.

The following theorem shows that with high probability, the best subset involves false selection once there are sufficiently many models in $\AA(s)$ with small projected signal margin as per \eqref{eq:psm_nec2}. Therefore, together with Theorem \ref{thm2}, Theorem \ref{thm3} shows tightness of the lower bound of the minimum projected signal margin \eqref{eq:psm_suff} (up to a constant) uniformly over $s \in [s ^ * - 1]$. 

\begin{theorem}
	\label{thm3}
	Suppose that $\{\epsilon_i \}_{i\in[n]} $ are independent Gaussian noise with variance $\sigma ^ 2$ and $s\ge 2$. Under Assumptions \ref{ass:packing2} and \ref{ass:residual_margin_s}, 
	we have that  
	\[
		\PP\biggl(\min_{\cS \in \AA_{j_0}} \cL_{\cS} < \min_{\cS \in \AA ^ *(s )} \cL_{\cS} \biggr) \ge 1 -4p ^ {-(\xi c_1\delta ^ 2_0 - 1)c_{\delta_0}} - p ^ {- c_1\delta_0 ^ 2 c_{\delta_0}} - 6sp^{-(c_2\xi^2-2)},  
	\]
	where $c_1$ and $c_2$ are universal constants. 
\end{theorem}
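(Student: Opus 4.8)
The plan is to split the target event into two high-probability events and union bound their complements. Set
\[
E_A := \Bigl\{\min_{\cS\in\AA_{j_0}}\cL_\cS < \cL_{\cS^\dagger}\Bigr\}, \qquad E_B := \Bigl\{\cL_{\cS^\dagger} = \min_{\cS\in\AA^*(s)}\cL_\cS\Bigr\},
\]
so that on $E_A\cap E_B$ we have $\min_{\cS\in\AA_{j_0}}\cL_\cS < \cL_{\cS^\dagger} = \min_{\cS\in\AA^*(s)}\cL_\cS$, the assertion of the theorem. The complement of $E_A$ will deliver the first two deviation terms and that of $E_B$ the term $6sp^{-(c_2\xi^2-2)}$. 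The enabling algebraic fact is that every member of $\AA_{j_0}$ shares the core $\cS_0^\dagger = \cS^\dagger\setminus\{j_0\}$ with $\cS^\dagger$: since $\bX_j$ enters through its residual $\wt\bu_j = (\bI-\bP_{\bX_{\cS_0^\dagger}})\bX_j$, the rank-one update $\bP_{\bX_{\cS_0^\dagger\cup\{j\}}} = \bP_{\bX_{\cS_0^\dagger}} + \bar\bu_j\bar\bu_j^\top$ gives
\[
\cL_{\cS_0^\dagger\cup\{j\}} - \cL_{\cS^\dagger} = (\bar\bu_{j_0}^\top\by)^2 - (\bar\bu_j^\top\by)^2,
\]
so $E_A$ is exactly $\max_{j\in\cJ_{\delta_0}}(\bar\bu_j^\top\by)^2 > (\bar\bu_{j_0}^\top\by)^2$. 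I would also record that $\Phi(\cS_0^\dagger\cup\{j\}) = \cS^\dagger$ (the constrained maximiser of $\ltwonorm{\bP_{\bX_{\cS^\ddagger}}\bmu^*}$ over sets in $\AA^*(s)$ containing $\cS_0^\dagger$ is the global maximiser $\cS^\dagger$), whence $|\Phi(\cS)\setminus\cS|=1$ and $\psm(\cS_0^\dagger\cup\{j\}) = |\bar\bu_{j_0}^\top\bmu^*| - |\bar\bu_j^\top\bmu^*|$; Assumption \ref{ass:packing2}(iii) thus reads $0 < |\bar\bu_{j_0}^\top\bmu^*| - |\bar\bu_j^\top\bmu^*| < \delta_0\sigma(c_{\delta_0}\log p)^{1/2}/20$.

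To lower bound $\PP(E_A)$, write $\bar\bu_j^\top\by = a_j + g_j$ with $a_j := |\bar\bu_j^\top\bmu^*|$ and $g_j := \bar\bu_j^\top\bepsilon\sim\cN(0,\sigma^2)$, after orienting each $\bar\bu_j$ so its signal component is nonnegative and passing to the majority orientation class (which preserves the $\delta_0$-packing and keeps cardinality $\gtrsim p^{c_{\delta_0}}$). Since $a_j > a_{j_0} - \delta_0\sigma(c_{\delta_0}\log p)^{1/2}/20$, it suffices to force $\max_{j\in\cJ_{\delta_0}}g_j > |g_{j_0}| + \delta_0\sigma(c_{\delta_0}\log p)^{1/2}/20$: then the maximising index gives $\max_j\bar\bu_j^\top\by > a_{j_0} + |g_{j_0}| \ge |\bar\bu_{j_0}^\top\by|$ and squaring preserves the strict inequality. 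I would bound the scalar Gaussian $|g_{j_0}|$ by a standard tail estimate and obtain a high-probability \emph{lower} bound on $\max_j g_j$ from the packing in Assumption \ref{ass:packing2}(i)--(ii); matching the two thresholds produces the terms $p^{-c_1\delta_0^2 c_{\delta_0}}$ and $4p^{-(\xi c_1\delta_0^2-1)c_{\delta_0}}$. This is precisely the $s=1$ mechanism behind Theorem \ref{thm2}, now applied to the residualised directions $\bar\bu_j$.

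To lower bound $\PP(E_B)$, I would show $\cS^\dagger$ beats every other member of $\AA^*(s)$. For $\cS\in\AA^*(s)$, $\cS\neq\cS^\dagger$, expand $\cL_\cS - \cL_{\cS^\dagger}$ into a signal term, a linear Gaussian term and a quadratic form in $\bepsilon$. The Pythagorean identity turns the signal term into $\ltwonorm{\bP_{\cS^\dagger|\cS}\bmu^*}^2 - \ltwonorm{\bP_{\cS|\cS^\dagger}\bmu^*}^2$, which factors as a difference times a sum; Assumption \ref{ass:residual_margin_s} lower bounds the difference, so the signal term is at least $\xi^2\sigma^2(\log p)|\cS^\dagger\setminus\cS|$. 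The linear term is mean-zero Gaussian with standard deviation $\lesssim\sigma(\ltwonorm{\bP_{\cS^\dagger|\cS}\bmu^*} + \ltwonorm{\bP_{\cS|\cS^\dagger}\bmu^*})$, and the quadratic form $\bepsilon^\top(\bP_{\bX_{\cS^\dagger}} - \bP_{\bX_\cS})\bepsilon$ involves a matrix of rank at most $2|\cS^\dagger\setminus\cS|$ with eigenvalues in $[-1,1]$, so a Hanson--Wright/$\chi^2$ tail controls it at the scale $\sigma^2|\cS^\dagger\setminus\cS|\log p$. For $\xi$ beyond the constant of Assumption \ref{ass:residual_margin_s} the signal dominates both noise contributions, and a union bound organised by $k = |\cS^\dagger\setminus\cS|\in[s]$ (absorbing the combinatorial factor $\binom{s}{k}\binom{s^*-s}{k}\le p^{2k}$) yields $6sp^{-(c_2\xi^2-2)}$.

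The main obstacle is the high-probability \emph{lower} bound on $\max_{j\in\cJ_{\delta_0}}g_j$ needed for $E_A$. The $g_j$ are correlated Gaussians whose pairwise correlations the $\delta_0$-packing only bounds below $1-\delta_0^2/2$, so independence is unavailable and the usual union-bound upper tail is useless here. I would instead run a second-moment (Paley--Zygmund) argument on the count $N_t = \#\{j\in\cJ_{\delta_0}: g_j > t\}$, where the $\delta_0$-separation is exactly what keeps $\E N_t^2$ comparable to $(\E N_t)^2$ (equivalently, invoke Sudakov minoration with Borell--TIS concentration). Carrying the packing constant $\delta_0$ quantitatively through this supremum bound, so that the guaranteed level of $\max_j g_j$ clears both the $\delta_0\sigma(c_{\delta_0}\log p)^{1/2}/20$ slack and the fluctuation of $g_{j_0}$, is what fixes the $\delta_0^2 c_{\delta_0}$ scaling in the exponents and is the most delicate step.
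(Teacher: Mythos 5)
Your proposal is correct and its overall architecture coincides with the paper's: the same two-event split (first, that $\cS^\dagger$ minimizes $\cL_\cS$ over $\AA^*(s)$, proved exactly as you describe via the signal/linear/quadratic decomposition, the Pythagorean factorization, Assumption \ref{ass:residual_margin_s}, Hoeffding and Hanson--Wright with a union bound over $|\cS^\dagger\setminus\cS|$, yielding $6sp^{-(c_2\xi^2-2)}$; second, that some set in $\AA_{j_0}$ beats $\cS^\dagger$, proved via Sudakov minoration over the $\delta_0$-packing together with concentration of the Gaussian supremum). Where you genuinely depart is in the execution of the second event: the paper retains the three-term expansion of $n^{-1}(\cL_\cS-\cL_{\cS^\dagger})$, handling the linear term by Sudakov plus sub-Gaussianity of the supremum and the quadratic term by a separate Hanson--Wright bound, whereas your rank-one identity $\cL_{\cS_0^\dagger\cup\{j\}}-\cL_{\cS^\dagger}=(\bar{\bu}_{j_0}^\top\by)^2-(\bar{\bu}_j^\top\by)^2$ collapses the whole comparison into a maximum of correlated scalar Gaussians versus a single Gaussian, absorbing the quadratic noise into the square. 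This is tidier (the paper implicitly has the same structure when it writes $\bgamma_\cS-\bgamma_{\cS^\dagger}=n^{-1/2}(\bar{\bu}_{j_0}\bar{\bu}_{j_0}^\top-\bar{\bu}_j\bar{\bu}_j^\top)\bmu^*$ but does not exploit it to eliminate the quadratic-form step), at the cost of the orientation bookkeeping you note. Two small caveats: your collapsed version naturally produces both error terms at the scale $p^{-c\delta_0^2 c_{\delta_0}}$, whereas the paper's $\xi$-improved first term $4p^{-(\xi c_1\delta_0^2-1)c_{\delta_0}}$ comes from thresholding the quadratic noise against the large signal $|\bar{\bu}_{j_0}^\top\bmu^*|\ge\xi\sigma(\log p)^{1/2}$ guaranteed by Assumption \ref{ass:residual_margin_s}, so recovering the exact stated exponent requires keeping that term separate; and you are right that the delicate point in either version is matching the Sudakov constant against both the $\delta_0\sigma(c_{\delta_0}\log p)^{1/2}/20$ slack of Assumption \ref{ass:packing2} and the fluctuation at $j_0$ --- the paper resolves this with the same Sudakov-plus-concentration device you cite as your alternative.
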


\subsection{Projected signal margin under random design}\label{sec_margin}

The previous two subsections demonstrate the pivotal role of the minimum projected signal margin $\psm_*(s)$ in underpinning the sure selection of BSS. In this subsection, we explicitly derive $\psm_*(s)$ under random design, so that we can verify the sufficient condition \eqref{eq:psm_suff} with ease. 
% Now we apply Lemma \ref{lem:dist_project} to bound the minimum projected signal margin between any set $\cS$ with non-zero FDP (i.e., $\forall \cS \in \AA (s)$) and its optimal swap set $\Phi(\cS)$. 
The following theorem gives an explicit lower bound of $\psm_*(s)$ when we have isotropic Gaussian design and $\bbeta ^ *_{\cS ^ *}$ has homogeneous entries. 

\begin{theorem}
	\label{thm:m_lower_bound_ind}
	Consider $n$ independent observations $\{\bx_i\}_{i\in[n]}$ of $\cN(\textbf{0},\bI_p)$. Suppose that $\log p \gtrsim s ^ *$, and that $\beta_j^* = \beta, \forall j\in\cS^*$. Then there exist universal constants $C, c>0$ such that for any  $\kappa > 9C^2$, whenever $n \ge \kappa {s^*}^2 (\log p)^2$, we have 
	\[
	    \PP\bigl\{ \psm_*(s) \ge n^{1/2}c|\beta|, ~\forall s\in[s^*-1] \bigr\} \ge 1-C{s^*}^2p^{-(C^{-1}\kappa ^ {1/2}-3)}. 
	\]
\end{theorem}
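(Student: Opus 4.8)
The plan is to fix a target sparsity $s \in [s^*-1]$ and a model $\cS \in \AA(s)$, decompose $\cS$ and $\Phi(\cS)$ into true and false parts, and bound the numerator of $\psm(\cS)$ in \eqref{eq:psm_def} from below. Write $\cA := \cS \cap \cS^*$, $\cF := \cS \setminus \cS^*$ (the $t := |\cF|$ false variables), $\cD := \cS^* \setminus \cA$, and $\Phi(\cS) = \cA \cup \cB$ with $\cB \subset \cD$, $|\cB| = t$, and recall $\bmu^* = \beta \bX_{\cS^*}\bone$. Two structural observations drive the argument. First, since $\cS \cap \Phi(\cS) = \cA$, the \emph{spurious} term simplifies to $\bP_{\cS | \Phi(\cS)} = \bP_{\bX_\cS} - \bP_{\bX_\cA}$, which does not depend on the data-dependent swap $\Phi$ and is governed entirely by the false block $\bX_\cF$. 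Second, because $\Phi(\cS)$ maximizes $\ltwonorm{\bP_{\bX_{\cS^\ddagger}}\bmu^*}^2$ over feasible $\cS^\ddagger \supset \cA$ and the projections are nested, the \emph{gain} term satisfies $\ltwonorm{\bP_{\Phi(\cS) | \cS}\bmu^*}^2 \ge \ltwonorm{(\bP_{\bX_{\cA \cup \cB_0}} - \bP_{\bX_\cA})\bmu^*}^2$ for any fixed feasible $\cB_0 \subset \cD$ with $|\cB_0| = t$. These reductions let me avoid unioning over the random $\Phi$ and reduce everything to Gaussian quadratic and bilinear forms in the independent blocks $\bX_{\cS^*}$ and $\bX_\cF$.

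For the gain term I would exploit that $n \ge \kappa {s^*}^2 (\log p)^2$ forces a strong restricted isometry on the true block: with overwhelming probability $\ltwonorm{\bX_{\cT}^\top \bX_{\cT}/n - \bI}$ is below an appropriate multiple of $1/s^*$ simultaneously over all $\cT \subset [p]$ with $|\cT| \le 2 s^*$. On this event the columns of $\bX_{\cS^*}$ are nearly orthogonal with squared norms $\approx n$, so for $\cT \subset \cS^*$ one has $\ltwonorm{\bP_{\bX_\cT}\bmu^*}^2 \approx \beta^2 |\cT| n$; taking $\cT = \cA \cup \cB_0$ and $\cT = \cA$ and subtracting gives $\ltwonorm{\bP_{\Phi(\cS) | \cS}\bmu^*}^2 \gtrsim \beta^2 t n$. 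Alternatively, and with lighter requirements, I can lower-bound the gain by the one-dimensional projection onto $(\bI - \bP_{\bX_\cA})\bX_{\cB_0}\bone$, whose inner product with $\bmu^*$ equals $\beta \ltwonorm{(\bI - \bP_{\bX_\cA})\bX_{\cB_0}\bone}\{1 + o(1)\} \approx |\beta|\sqrt{tn}$ once the cross term against $(\bI - \bP_{\bX_\cA})\bX_{\cD \setminus \cB_0}\bone$ is shown to be of smaller order.

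For the spurious term I would condition on $\bX_{\cS^*}$, freezing $\bmu^*$ and $\bP_{\bX_\cA}$ while $\bX_\cF$ remains a fresh Gaussian block independent of them. Then $\ltwonorm{(\bP_{\bX_\cS} - \bP_{\bX_\cA})\bmu^*}^2$ is the squared length of the projection of $(\bI - \bP_{\bX_\cA})\bmu^*$ onto the random $t$-dimensional subspace $\col\{(\bI - \bP_{\bX_\cA})\bX_\cF\}$; via the Gram matrix it is at most $\ltwonorm{\bX_\cF^\top(\bI - \bP_{\bX_\cA})\bmu^*}^2 / \lambda_{\min}\{\bX_\cF^\top(\bI - \bP_{\bX_\cA})\bX_\cF\}$, where the numerator is $\ltwonorm{(\bI - \bP_{\bX_\cA})\bmu^*}^2$ times a $\chi^2_t$ variate and the denominator concentrates around $n - |\cA| \asymp n$. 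Since $\ltwonorm{(\bI - \bP_{\bX_\cA})\bmu^*}^2 \le \ltwonorm{\bmu^*}^2 \approx \beta^2 n s^*$ and a uniform tail gives $\chi^2_t \lesssim (t + s^*)\log p$ after the union bound, the spurious term is $\lesssim \beta^2 t s^* \log p$. As $n \ge \kappa {s^*}^2(\log p)^2$ makes $\beta^2 t s^* \log p \ll \beta^2 t n$, the numerator of $\psm(\cS)$ is $\ge |\beta|\sqrt{tn}\,\{1 - o(1)\}$; dividing by $|\Phi(\cS)\setminus\cS|^{1/2} = t^{1/2}$ yields $\psm(\cS) \gtrsim |\beta| n^{1/2}$.

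The main obstacle is to make all of the above hold simultaneously over every $s \in [s^*-1]$ and every $\cS \in \AA(s)$ while keeping a polynomial-in-$p$ failure probability. I would union over the configurations $(\cA, \cF, \cB_0)$, of which there are at most $\sum_{k \le 2s^*}\binom{p}{k} \le p^{O(s^*)}$, calibrating each Gaussian concentration threshold so that its per-configuration failure probability beats this count; the hypothesis $\log p \gtrsim s^*$ ensures $p^{O(s^*)} = \exp\{O((\log p)^2)\}$ is absorbed, and $n \ge \kappa {s^*}^2(\log p)^2$ fixes the deviation scale (of order $n^{1/2} \asymp \kappa^{1/2} s^* \log p$) that produces a failure exponent growing like $\kappa^{1/2}$, with the prefactor $C{s^*}^2$ coming from the outer unions over $s$ and over $t = |\cF|$. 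The delicate point throughout is the \emph{cross-term signal leakage}: both the gain and spurious terms contain bilinear forms coupling the signal-bearing block $\bX_{\cS^*}$ with the swapped or false columns, and these must be shown to be of strictly smaller order than the $\beta^2 t n$ main term uniformly. This is exactly where the sample-size requirement $n \gtrsim {s^*}^2(\log p)^2$, rather than the weaker $n \gtrsim s^* \log p$ sufficient for ordinary restricted isometry, becomes essential.
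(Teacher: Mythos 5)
Your plan is correct, and it reaches the same estimates as the paper (gain term $\asymp \beta^2 t n$, spurious term $\lesssim \beta^2 t s^* \log p \ll \beta^2 t n$, hence $\psm(\cS)\gtrsim |\beta| n^{1/2}$ after dividing by $t^{1/2}$), but the route is genuinely different in two respects. First, the paper handles the data-dependence of $\Phi(\cS)$ by proving its concentration bound for \emph{every} feasible swap $\cS^\ddagger\in\mathbb{F}(\cS)=\{\cS^\ddagger\in\AA^*(s):\cS\cap\cS^*\subset\cS^\ddagger\}$ and paying an extra union-bound factor $|\mathbb{F}(\cS)|=\binom{s^*-s+t}{t}\lesssim e^{s^*}\lesssim p$, absorbed via $\log p\gtrsim s^*$. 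Your two structural observations --- that $\cS\cap\Phi(\cS)=\cS\cap\cS^*$ always, so $\bP_{\cS|\Phi(\cS)}=\bP_{\bX_\cS}-\bP_{\bX_{\cS\cap\cS^*}}$ is deterministic given $\cS$, and that the gain term can be lower-bounded by one canonical $\cB_0$ using the optimality of $\Phi$ --- eliminate this union entirely, which is a clean simplification. Second, the paper's concentration engine is its Lemma B.1: it conditions on $\bX_{\cS\cap\cS^\ddagger}$, regresses the remaining columns and the signal on that block, and derives exact conditional means $(n-s+t)\nu_1+t\nu_2$ and $(n-s+t)\nu_1'+t\nu_2'$ for the two projected quadratic forms under a general covariance $\bSigma$, only then specializing to $\nu_1=t\beta^2$, $\nu_2=(s^*-s)\beta^2$, $\nu_1'=0$, $\nu_2'=(s^*-s+t)\beta^2$ for $\bSigma=\bI_p$. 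Your restricted-isometry-plus-Gram-matrix argument is more elementary and self-contained, but it leans on isotropy (independence of the false block from the signal) in a way that does not generalize as readily; the paper's lemma is what lets the authors remark that the result extends to arbitrary covariance. One point to watch in your RIP route for the gain term: to make the error $\beta^2\delta n s^*$ in the quadratic form $\bone^\top\bX_{\cS^*}^\top\bP_{\bX_\cT}\bX_{\cS^*}\bone$ negligible against $\beta^2 t n$ at $t=1$ you need the isometry constant $\delta\lesssim 1/s^*$, i.e.\ $n\gtrsim s^{*3}\log p$; this is implied by $n\ge\kappa s^{*2}(\log p)^2$ only because of the standing assumption $\log p\gtrsim s^*$, so that hypothesis is doing real work there (your alternative one-dimensional projection onto $(\bI-\bP_{\bX_\cA})\bX_{\cB_0}\bone$ sidesteps this and needs only $n\gg s^*\log p$). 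With that caveat the accounting of the union bounds and the failure exponent in $\kappa^{1/2}$ matches the paper's.
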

Theorem \ref{thm:m_lower_bound_ind} provides an explicit characterization of the projected signal margin under independent Gaussian design. 
Combining Theorem \ref{thm1} and Theorem \ref{thm:m_lower_bound_ind}, we see that $|\beta| \gtrsim \sigma \bigl(\frac{\log p}{n}\bigr)^{1/2}$ is sufficient for BSS to achieve sure early selection with high probability in this case. This lower bound for $|\beta|$ is aligned with the well-known $\beta$-$\min$ condition for support recovery. Specifically, Theorem 3.3 in  \cite{javanmard2019false} shows that when $\min_{j\in\cS^*} |\beta^*_j|/ \bigl\{ \sigma (\log p/n )^{1/2}\bigr\}\to \infty$ as $n,p\to \infty$, one can achieve asymptotic power one through a false discovery rate control procedure based on debiased LASSO. 
Theorem 2 in \cite{wainwright2009information} shows the necessity of the $\beta$-$\min$ condition in identifying true variables: if the $\beta$-$\min$ condition is violated, the failure probability for variable selection consistency can be lower bounded by $1/2$. Note that the novelty of Theorem \ref{thm:m_lower_bound_ind} given the previous works is that it provides selection accuracy guarantee for the entire early solution path rather than only the case when $s = s ^ *$. 
%See, for example, \citep{zhang2012general, raskutti2011minimax}.  Consider in the low dimensional cases where $n > p$ and the OLS $\wh{\bbeta} = (\bX^{\top}\bX)^{-1} \bX^{\top}\by$ can be applied. We can have $\max_{j\in{\cS^*}^c} \wh{\beta}_j \gtrsim \sigma\left( \frac{\log p}{n} \right)^{1/2} $ with high probability under mild regularity conditions, as long as $s^*/p\le c_0<1$, where $c_0$ is a constant. In other words, if $|\beta_j|\le c\sigma\left(\frac{\log p}{n}\right)^{1/2} $ for some $j\in\cS^*$ and small enough constant $c>0$, there exists at least one false variable that outperforms $\beta_j$ with high probability.  
%This is the $\beta_{\min}$ condition studied in \wsh{cite Martin ... , ....}. 2-3 works that mentioned beta-min before. 

Theorem \ref{thm:m_lower_bound_ind} demonstrates that our lower bound \eqref{eq:psm_suff} for the projected signal margin is achievable under independent Gaussian design. One can actually extend Theorem \ref{thm:m_lower_bound_ind} to any covariance structure of the design; we choose to focus on the independent design for simplicity of presentation and comparison of the results.
% by combining Lemma \ref{lem:dist_project} and the proof of theorem \ref{thm:m_lower_bound_ind} 
The independent Gaussian design has been frequently used to investigate the solution path of variable selectors, e.g., \cite{su2017false}, \cite{su2018first} and \cite{wang2020complete}.
%\cite{su2017false} considered a normalized version, where the design matrix $\bX$ has i.i.d. $\cN(0,1/n)$ entries, and discussed the solution path of the Lasso.
% A normalized version is that the design matrix $\bX$ has i.i.d. $\cN(0,1/n)$ entries.
% This assumption is considered in all the three  aforementioned works on solution paths  for theoretical discussions. 

% \section{\cite{Tik43} regularized best subset selection}
% \begin{theorem}\label{thm:tik_bss}
%     If we have
%     \[
%     \gamma \ge ...
%     \]
%     in , then with probability at least $1-$, we have sure early selection of Tikhonov regularized best subset selection. 
% \end{theorem}

% \begin{proof}
%     \[
% \begin{aligned}
%     \cL_{\cS} & \le \frac{1}{2\gamma}\|\|_2^2
% \end{aligned}
%     \]
% \end{proof}

\section{Algorithmic strategies}\label{sec_ipath}

Sure early selection is an appealing property of BSS. Nevertheless, it can be difficult to achieve in practice because of the computational difficulty of exact BSS. 
In this section, we introduce three computational strategies for BSS to pursue sure early selection. 
%Section \ref{sec:l0vspenal} compares best subset selection with penalized least squares, 
Section \ref{sec:lbss} proposes a ``screen then select'' strategy that runs exact BSS within only the features that pass a preliminary screening.
Section \ref{sec:aprroxal} introduces the iterative hard thresholding algorithms, exemplified by projected gradient descent \citep{BDa08,BDa09} and CoSaMP \citep{NTr09}, as computational surrogates for BSS.

%\subsection{Best subset selection versus penalized least squares}\label{sec:l0vspenal}

\subsection{Screen then select}\label{sec:lbss}

Recently, \cite{bertsimas2016best} proposed a MIO (Mixed Integer Optimization) formulation for best subset selection, which is amenable to a group of popular
optimization solvers including CPLEX, GLPK, MOSEK and GUROBI. These solvers can handle BSS problems with $n,p$ in $1000$s within minutes.  Nevetheless, $p$ can go far beyond thousands in ultrahigh-dimensional problems, for which applying the MIO algorithm is again computationally burdensome or even infeasible. 

To resolve the issue of high dimension, we consider the following ``screen then select" (STS) strategy. 
In the screening stage, we generate a screening set $\wt{\cS} \subset [p]$ of size $\wt{s} > s$ through a preliminary feature screening procedure, which can be but is not limited to penalized least squares methods \citep{tibshirani1996regression, fan2001variable, zhang2010nearly} and sure independence screening \citep{fan2008sure}. In the selecting stage, we solve the exact BSS problem with only the screened features, i.e., $\bX_{\wt\cS}$, to finalize the model selection.  
Define the collection of near best $s$-subsets within $\wt\cS$ as 
\[
	\wt{\SS}(s,\eta,\wt{\cS}) := \{\cS\subset \wt{\cS}: |\cS| = s, \cL_{\mS} \le \wt{\cL}_* + \eta  \psm_*^2(s) \}, 
\]
where $\wt{\cL}_* :=\min_{\cS\subset\wt{\cS}, |\cS| = s}\cL_\cS  $ is the optimal objective of the BSS problem on $\wt{\cS}$, and where $\eta$ controls the tolerance level of optimization error.
Define the sure screening event as $\cE := \{\cS^* \subset \wt{\cS}\}$. 
%Let $\wh{\bbeta}^{\text{sts}}$ denote the solution under the
%“screen then select” strategy and $\wh{\cS}^{\text{sts}}$ denote the support set of $\wh{\bbeta}^{\text{sts}}$. 
The following theorem shows that under event $\cE$, near best $s$-subsets within $\wt\cS$ can achieve sure early selection with high probability.

\begin{theorem}[Post-screening sure early selection]
    \label{thm:scrbss}
	Suppose that $\log p \gtrsim s ^ *$. There exists a universal constant $C \ge 1$, such that for any $\xi>C$ and $0 \le \eta < 1$, whenever
	\beq
		\label{eq:psm_suff_sts}
		\psm_*(s) \ge \frac{8 \xi \sigma (\log p)^{1/2}}{1-\eta}, 
	\eeq	
	we have that 
	\beq
	       \label{ineq:thm1_result_sts}
		\PP\bigl\{\fdp(\wh{\cS}   ) = 0 , \forall \wh{\cS} \in \wt{\SS}(s,\eta,\wt{\cS})
		%,~\forall \wh{\mS}^{\text{sts}} \in \mathbb{S}(s, \eta) 
		\bigr\} \ge 1-Csp^{-2(C^{-2}\xi^2-1)} - \PP(\cE^c).
	\eeq
\end{theorem}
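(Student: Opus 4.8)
The plan is to reuse the core pairwise comparison that drives the proof of Theorem \ref{thm1} and to exploit the sure screening event $\cE$ to guarantee that the optimal feature swaps remain feasible after screening. The essential subtlety is that membership in $\wt{\SS}(s,\eta,\wt{\cS})$ is governed by the \emph{restricted} optimum $\wt{\cL}_*$ rather than the global optimum $\cL_*$, and since $\wt{\cL}_*\ge\cL_*$ one cannot simply invoke a set inclusion $\wt{\SS}(s,\eta,\wt{\cS})\subset\SS(s,\eta)$ and appeal to Theorem \ref{thm1} as a black box. (A black-box reduction that applies Theorem \ref{thm1} to the reduced design $\bX_{\wt\cS}$ is also awkward, since it would replace $p$ by $\wt{s}$ in the probability bound.) Instead I would isolate from the proof of Theorem \ref{thm1} the high-probability event
\[
\mathcal{G}:=\Bigl\{\cL_{\cS}-\cL_{\Phi(\cS)}>\eta\,\psm_*^2(s)\ \text{for all}\ \cS\in\AA(s)\Bigr\},
\]
which, under \eqref{eq:psm_suff_sts}, holds with probability at least $1-Csp^{-2(C^{-2}\xi^2-1)}$. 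Crucially, $\mathcal{G}$ is a statement about every false-discovery model in $[p]$ and is defined without reference to the (data-dependent) screening set $\wt{\cS}$, so it can be combined with any realization of $\wt{\cS}$.

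Next I would work on the intersection $\cE\cap\mathcal{G}$ and argue by contradiction. Suppose some $\cS\in\wt{\SS}(s,\eta,\wt{\cS})$ contains a false variable, i.e. $\cS\in\AA(s)$ with $\cS\subset\wt{\cS}$. Its optimal swap $\Phi(\cS)$ lies in $\AA^*(s)$, hence $\Phi(\cS)\subset\cS^*$; on the sure screening event $\cE$ we have $\cS^*\subset\wt{\cS}$, so $\Phi(\cS)\subset\wt{\cS}$ is a feasible size-$s$ competitor in the restricted problem. Consequently $\wt{\cL}_*\le\cL_{\Phi(\cS)}$, and the near-best membership of $\cS$ yields
\[
\cL_{\cS}\le\wt{\cL}_*+\eta\,\psm_*^2(s)\le\cL_{\Phi(\cS)}+\eta\,\psm_*^2(s),
\]
which directly contradicts the defining inequality of $\mathcal{G}$. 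Therefore, on $\cE\cap\mathcal{G}$, no set in $\wt{\SS}(s,\eta,\wt{\cS})$ has a false discovery, i.e. $\wt{\SS}(s,\eta,\wt{\cS})\subset\AA^*(s)$ and every such set attains $\fdp=0$.

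The conclusion then follows from a union bound, $\PP(\cE\cap\mathcal{G})\ge 1-\PP(\mathcal{G}^c)-\PP(\cE^c)\ge 1-Csp^{-2(C^{-2}\xi^2-1)}-\PP(\cE^c)$, which is exactly \eqref{ineq:thm1_result_sts}. I expect the only genuine difficulty to be the bookkeeping around the restricted optimum: one must verify that the swap $\Phi(\cS)$ survives screening so that $\wt{\cL}_*\le\cL_{\Phi(\cS)}$, which is precisely the role of $\cE$, and one must keep $\mathcal{G}$ quantified over all of $\AA(s)$ so that the argument is valid for the random, data-dependent $\wt{\cS}$. Note that the margin condition \eqref{eq:psm_suff_sts} uses the global $\psm_*(s)$, which only makes the requirement more conservative for the restricted family $\{\cS\in\AA(s):\cS\subset\wt{\cS}\}$, so no additional strengthening of the margin is needed.
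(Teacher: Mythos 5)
Your proof is correct, and its overall skeleton --- intersect a high-probability ``no spurious near-best subset'' event with the sure screening event $\cE$ and finish with a union bound --- is the same as the paper's. The only real divergence is in how the restricted optimum $\wt{\cL}_*$ is handled. You assert that a black-box appeal to Theorem \ref{thm1} is unavailable because $\wt{\cL}_*\ge\cL_*$ blocks the inclusion $\wt{\SS}(s,\eta,\wt{\cS})\subset\SS(s,\eta)$, and you therefore reopen the proof of Theorem \ref{thm1} to extract the pairwise-gap event $\mathcal{G}$ and compare each spurious $\cS\subset\wt{\cS}$ with its swap $\Phi(\cS)$ directly, using only $\wt{\cL}_*\le\cL_{\Phi(\cS)}$ on $\cE$. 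The paper in fact does use Theorem \ref{thm1} as a black box: on its conclusion event $E_1=\{\fdp(\wh\cS)=0,\ \forall\wh\cS\in\SS(s,\eta)\}$ the global minimizer, being trivially a member of $\SS(s,\eta)$, lies inside $\cS^*$, so on $E_1\cap\cE$ the global optimum is attained within $\wt{\cS}$, whence $\wt{\cL}_*=\cL_*$ and the inclusion you dismissed is restored on the good event. Both arguments are valid and yield the identical probability bound; yours is marginally more self-contained (it never needs the identity $\wt{\cL}_*=\cL_*$), while the paper's is shorter and reuses the statement of Theorem \ref{thm1} rather than its proof.
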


Theorem 7.21 in \cite{wainwright2019high} says that LASSO enjoys sure screening when the design satisfies the sparse Riesz condition and the irreprensentable condition and when the signals are not too weak. This means that under these conditions, if we use LASSO as the feature screener in the STS strategy, LASSO can generate an $\wt \cS$ such that $\PP(\cE ^ c) \to 0$ as $n, p \to \infty$. Combining this with the theorem above then implies that any near best $s$-subset of $\wt \cS$ achieves sure early selection. 
This motivates LBSS (LASSO plus BSS) that uses LASSO as the feature screener and then runs exact BSS for further selection.
Section \ref{sec:paths} numerically shows that LBSS achieves superior FDR-TPR tradeoff on the early solution paths over other approaches. 

\subsection{Iterative hard thresholding}\label{sec:aprroxal}

Another algorithmic strategy to implement BSS is through iterative hard thresholding (IHT), which can be viewed as greedy approximation for BSS. This section introduces two IHT algorithms. The first one is the vanilla IHT algorithm due to \cite{BDa08} and \cite{BDa09}, which is basically a projected gradient descent (PGD) algorithm that projects the iterate to an $\ell_0$-ball after a gradient descent update. The other one is CoSaMP (compressive sampling matching pursuit), an iterative two-stage hard thresholding algorithm proposed by \cite{NTr09}. We put the pseudocode of the two algorithms in Section A of the appendix for readers' reference. 

$\mathrm{PGD}$ enforces the sparsity of the solutions by directly projecting them to an $l_0$-ball. \cite{BDa08} showed that PGD enjoys near-optimal error guarantee within few iterations whose number depends only on the logarithm of a form of the signal-to-noise ratio of the problem. 
%We now choose CoSaMP (compressive sampling matching pursuit), an iterative two-stage hard thresholding algorithm proposed by \cite{NTr09}, as the computational surrogate for best subset selection. Note that CoSaMP is different from the original iterative hard thresholding (IHT) algorithm in \cite{BDa08} and \cite{BDa09}, which is instead a projected gradient descent algorithm that projects the iterate to an $\ell_0$-ball after a gradient descent update. CoSaMP performs two rounds of hard thresholding in each iteration: it first expands the model by recruiting the largest coordinates of the gradient (first hard thresholding) and then contracts the model by discarding the smallest components of the refitted signal on the expanded model (second hard thresholding). 
CoSaMP, instead, performs two rounds of hard thresholding in each iteration: it first expands the model by recruiting the largest coordinates of the gradient (first hard thresholding) and then reduces the model by discarding the smallest components of the refitted signal on the expanded model (second hard thresholding).
Under restricted isometry conditions, \cite{NTr09} showed that CoSaMP achieves optimal sample complexity and optimization guarantees, as well as high computational and memory efficiency, for recovering sparse signals. Later, \cite{jain2014iterative} established similar statistical and optimization guarantees for a generalized version of CoSaMP that serves any objective satisfying restricted strong convexity (RSC) and restricted strong smoothness (RSS). 
% They referred to the algorithm as a ``fully corrective'' two-stage hard thresholding algorithm.  
One particularly interesting result therein that connects CoSaMP with BSS is that given the true sparsity $s ^ *$, CoSaMP can find a model whose sparsity is of the same order of $s ^ *$ that achieves superior goodness of fit over the exact best size-$s$ subset. 
Based on this optimization guarantee, a more recent work \citep{FGZ20} discovers that CoSaMP (referred to as IHT therein) can achieve sure screening properties when enforced to select more than $s ^ *$ variables. These positive results motivate us to investigate the performance of PGD and CoSaMP on their early solution paths.
%to compute approximate solutions for best subset selection. 

% For any matrix $\bA$, recall that its Moore-Penrose inverse is denoted by $\bA^ +$. To be more self-contained, we present the pseudocode of CoSaMP in Algorithm \ref{alg_iht}.
% In each iteration, CoSaMP expands the current model by recruiting top $l$ variables with the largest absolute gradient values. Note that $\nabla \cL(\bbeta) = \bX ^ {\top}(\by - \bX\bbeta)$, which is basically sample covariance between the residuals $\by-\bX\bbeta$ and the design $\bX$. Hence, this expansion step can be interpreted as selecting variables with the highest absolute marginal covariance with the residual. Then CoSaMP fits the expanded model by least squares, selects the top $\pi$ variables with the largest absolute regression coefficients, and finally refits these $\pi$ variables using least squares. The algorithm is halted whenever the two iterates are sufficiently close. 

%Define $L := \max_{|\cS|\le 2\pi+l}\lambda_{\max}(\wh{\bSigma}_{\cS\cS})$, $\alpha := \min_{|\cS|\le 3\pi}\lambda_{\min}(\wh{\bSigma}_{\cS\cS})$, where $\pi$ and $l$ are the projection size and expansion size in CoSaMP. 

On top of these two existing methods, we propose Algorithm \ref{alg_ihtpath} to efficiently compute their solution paths.
Specifically, let $\texttt{A}$ denote either PGD or CoSaMP, and let $\Xi_{\texttt{A}}$ denote the collection of all the input of algorithm $\texttt{A}$ except initial value $\wh{\bbeta}_0$ and projection size $\pi$.
Consider a set of projection sizes $\Pi = \{\pi_1, \ldots, \pi_M\}$ with $\pi_1 < \ldots < \pi_M$. 
For any $i \in [M]$, let $\wh \bbeta ^ {\texttt{A}}(i)$ denote the solution of algorithm $\texttt{A}$ with $\pi = \pi_i$. Instead of computing $\wh \bbeta ^ {\texttt{A}}(i)$ for each $i \in [M]$ separately, we propose to use $\wh \bbeta ^ {\texttt{A}}(i)$ as a warm initializer to compute $\wh \bbeta ^ {\texttt{A}}(i + 1)$ (see line 3 of Algorithm \ref{alg_ihtpath}), which turns out to substantially accelerate the convergence in our numerical experiments.

\begin{center}
        \begin{algorithm}[H]
            \caption{\texttt{A} $-$ Path($\Xi_{\texttt{A}} , \Pi$)}\label{alg_ihtpath}
            \KwIn{$\Xi_{\texttt{A}}$, projection size set $\Pi = \{\pi_1, \ldots, \pi_M\}$ with $\pi_1 < \ldots < \pi_M$.}
            \begin{algorithmic}[1]
               \State  $\wh\bbeta^{\texttt{A}}(0) = \textbf{0}$
                \State \textbf{for} i = 1:$M$
                \State $~~\wh{\bbeta}^{\texttt{A}}(i)\gets\texttt{A}\bigl(\Xi_{\texttt{A}}, \wh{\bbeta}_0 =  \wh\bbeta^{\texttt{A}}(i - 1), \pi = \pi_i)$
                \State \textbf{end for}
            \end{algorithmic}
        	\KwOut{$\bigl(\wh\bbeta^{\texttt{A}}(1), \wh\bbeta^{\texttt{A}}(2), ..., \wh\bbeta^{\texttt{A}}(M)\bigr)$}
        \end{algorithm}
\end{center}

In Section \ref{sec:paths}, we see that $\mathrm{LBSS}$ performs overall the best in terms of the FDR-TPR tradeoff on the early solution path among all the investigated methods including PGD, CoSaMP, LASSO, etc.

\section{Numerical experiments}\label{sec:paths}

This section numerically investigates the FDR and TPR of the early paths of LBSS, PGD, COSAMP as well as penalized methods including LASSO, SCAD, MCP and $L_0L_2$. 
%We compare the model selection capability of LBSS (LASSO screening + best subset selection), CoSaMP, PGD (projected gradient descent), LASSO, SCAD and MCP.  
%CoSaMP with that of other competing methods through their FDR-TPR paths. 
Generally speaking, given a model selection procedure with tuning parameter $\lambda \in \RR$, the FDR-TPR path is formed by the set $\bigl\{\bigl(\mathrm{FDR(\wh\cS(\lambda))}, \mathrm{TPR}(\wh\cS(\lambda))\bigr)\bigr\}_{\lambda> 0}$, where $\wh\cS(\lambda)$ is the output of the model selector with tuning parameter $\lambda$. 
% We investigate LBSS, 
% %(LASSO screening + best subset selection), 
% PGD 
% %(projected gradient descent) 
% and CoSaMP along with three more competing model selection procedures. 
% The competing approaches we assess are LASSO \citep{tibshirani1996regression}, SCAD \citep{fan2001variable} and MCP \citep{zhang2010nearly}, 
For LBSS, the tuning parameter $\lambda$ is the target sparsity of best subset selection, while the screening size is pre-specified ($300$ variables in Section \ref{sec:simu} and $40$ variables in Section \ref{sec:realpath}).  For both PGD and CoSaMP, $\lambda$ is the projection size $\pi$. Regarding the expansion size $l$ in  CoSaMP, we set it to be the size of the model selected by MCP that is tuned by $10$-fold cross validation (CV) in terms of the mean squared error (MSE). 
For L0L2, $\lambda$ is the tuning parameter for $L_0$ regularization; the tuning parameter for $L_2$ regularization, which we donte by $\gamma$ after display \eqref{equ_penalizedsol}, is chosen oracularly: we found that $\gamma = 10 ^ {-5}$ yields the best FDR-TPR curve. 
Note that a perfect FDR-TPR path is in the ``$\Gamma$'' shape, meaning that the method keeps recruiting (nearly) true predictors until having them all. 

In the following, we evaluate the solution paths of all the approaches on both synthetic data and a skin cutaneous melanoma dataset. 
We use the R package \texttt{bestsubset} to solve exact BSS, the R package \texttt{picasso} \cite{GLJ19} to implement the LASSO, SCAD and MCP, and the R package \texttt{L0Learn} \citep{hazimeh2022l0learn} to implement L0L2.

\subsection{Simulated data}\label{sec:simu}

%We first elucidate the algorithms.
%CoSaMP algorithm in Section \ref{sec_ipath}. 
%We then compare its FDR-TPR path with those of other model selection procedures in Section \ref{sec_path}. 
%Finally, in Section \ref{sec:lock}, we use CoSaMP to perform preliminary screening for the knockoff filter \citep{barber2015controlling,candes2016panning} to enhance its power.

In this section, we investigate two different designs for numerical comparison: the autoregressive design and the equicorrelated design. Specifically, we set $p$ = $5,000$, $s^{\ast}$ = $50$ and $n=\lceil 2s^*\log p \rceil$. In each Monte Carlo experiment, we generate $\mS^{\ast}$ $\subset$ $[p]$ by randomly selecting $s^{\ast}$ locations from $[p]$ in a uniform manner and generate $\bbeta^*$ such that $\beta_j^{\ast}$ = $0$ for $j\in(\mS^{\ast})^c$ and that $\{(\beta_j^{\ast}/0.1)-1\}_{j\in\mS^{\ast}}\overset{\text{i.i.d.}}{\sim}\chi_1^2$.
Rows of the design matrix $\bX$ are independently generated from $\mN(\mathbf{0},\bSigma)$, where $\bSigma$ is either from autoregressive (Case 1) or equicorrelated (Case 2) models. 
%In Case 3, we use real data as our design matrix and generate $\bbeta ^ *$ according to the domain knowledge that we will elucidate later.
We generate $\by$ as $\by=\bX\bbeta^{\ast}+\bepsilon$ with $\bepsilon\sim\mN(\mathbf{0},\sigma^2\bI)$, where $\sigma ^ 2$ will be specified later. Each point on the presented FDR-TPR curve is based on the average of the FDPs and TPPs of the estimators obtained in 100 independent Monte Carlo experiments, using the same tuning parameter. In some challenging setups  such as those in Case 2, many methods can yield very low TPR. Besides, it is often meaningless to compare the early solution path when FDR is overwhelmingly high; therefore, for certain plots, we zoom in to show only the low-FDR part to facilitate comparison. \\

%Full plots are displayed in the supplementary material.

\noindent
\textsc{Case 1: Autoregressive design.} We set $\Sigma_{jk}=\rho^{|j-k|}$ for correlation parameter $\rho\in\{0, 0.5, 0.8 \}, \forall j, k \in [p]$ and $\sigma \in \{0.5,1\}$. 
The FDR-TPR paths, together with the FDR and TPR of the solutions chosen by 10-fold cross validation, are presented in Figure \ref{fig:ar}.
We have the following observations:
\begin{figure}
      \centering
      \setlength\tabcolsep{0pt}
	\renewcommand{\arraystretch}{-5}      
      \begin{tabular}{ccc}
      \includegraphics[width=4.8cm]{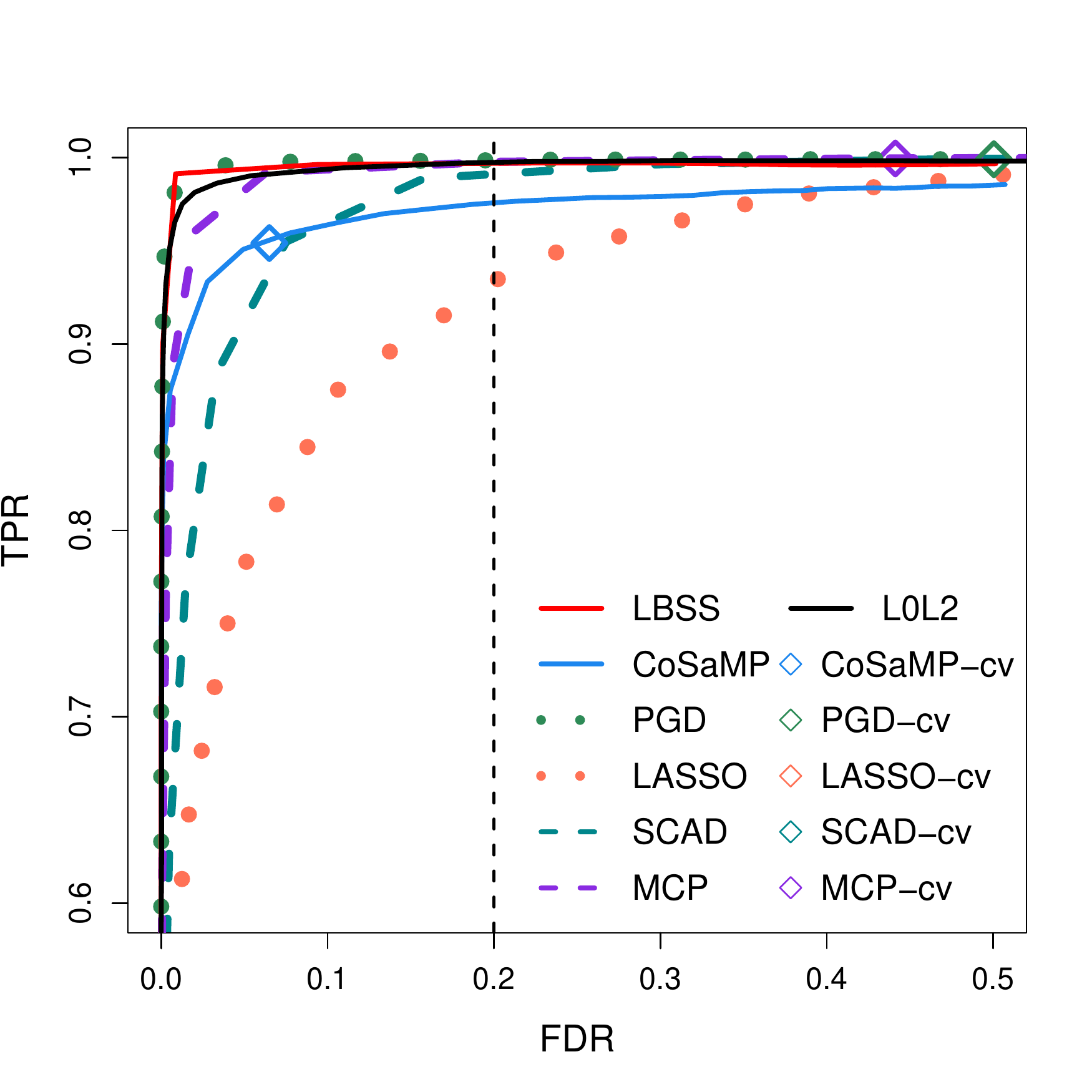} & \includegraphics[width=4.8cm]{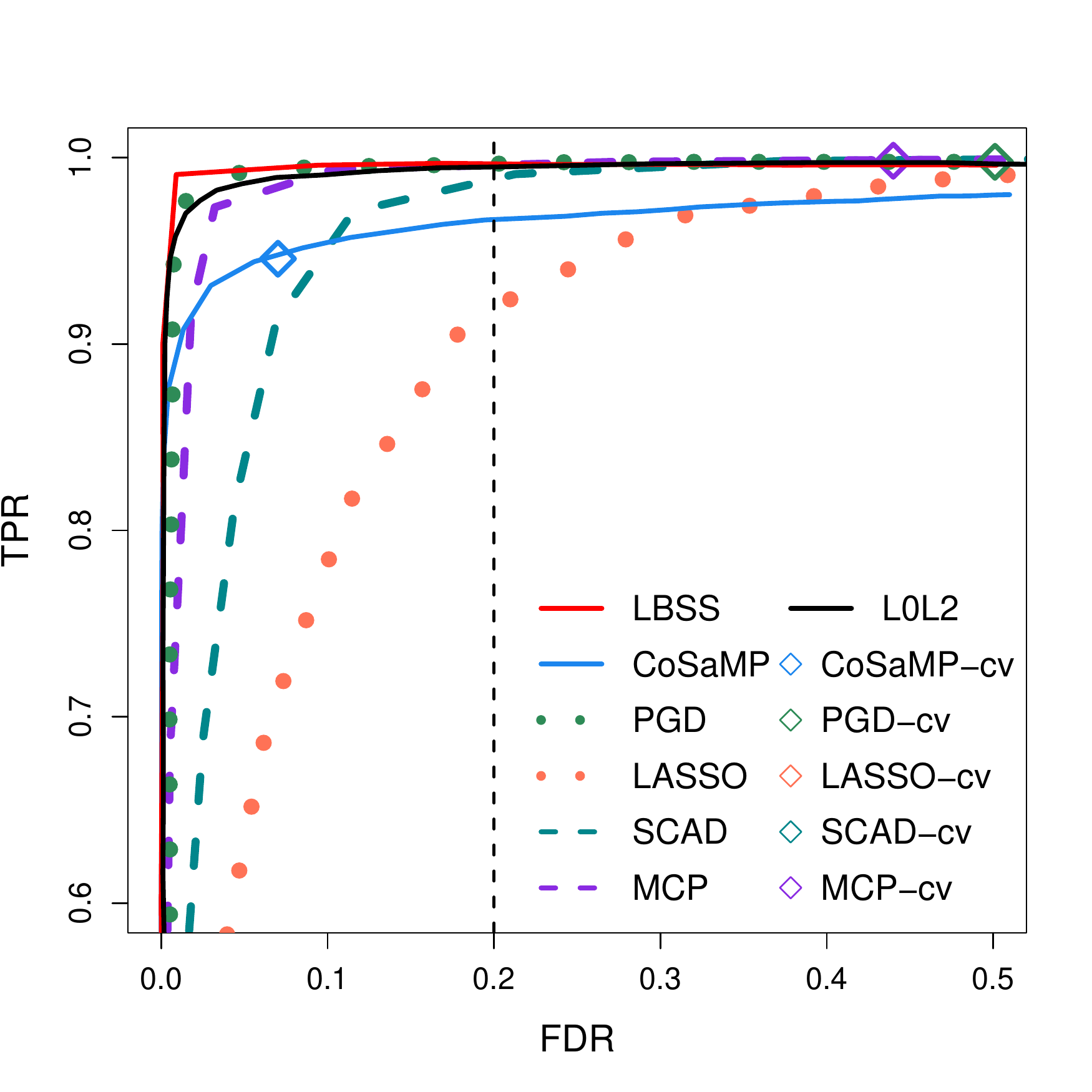} & \includegraphics[width=4.8cm]{11c} \\  
    $\rho = 0, \sigma = 0.5$ & $\rho = 0.5, \sigma = 0.5$ & $\rho = 0.8, \sigma = 0.5$ \\
      \includegraphics[width=4.8cm]{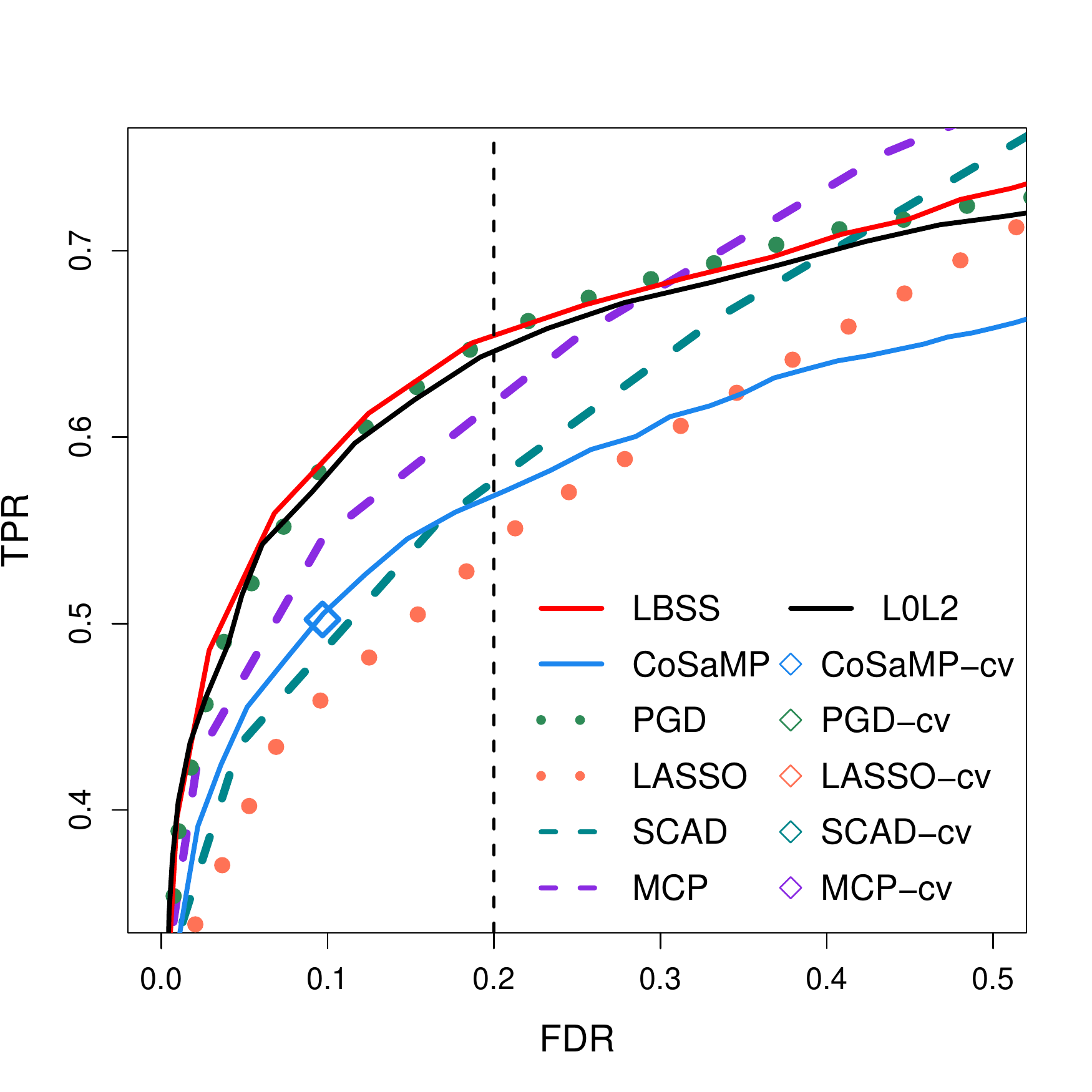} &  \includegraphics[width=4.8cm]{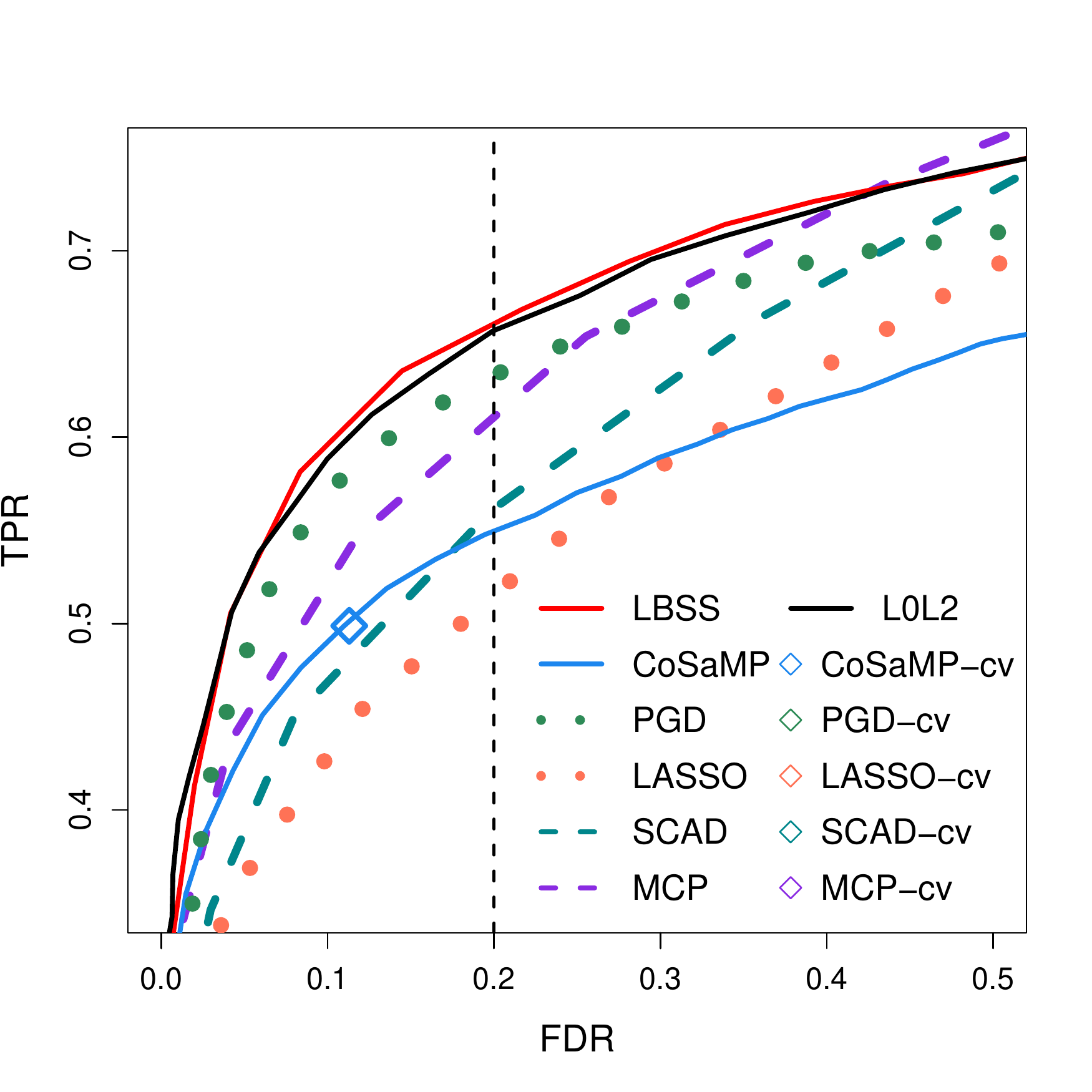} & \includegraphics[width=4.8cm]{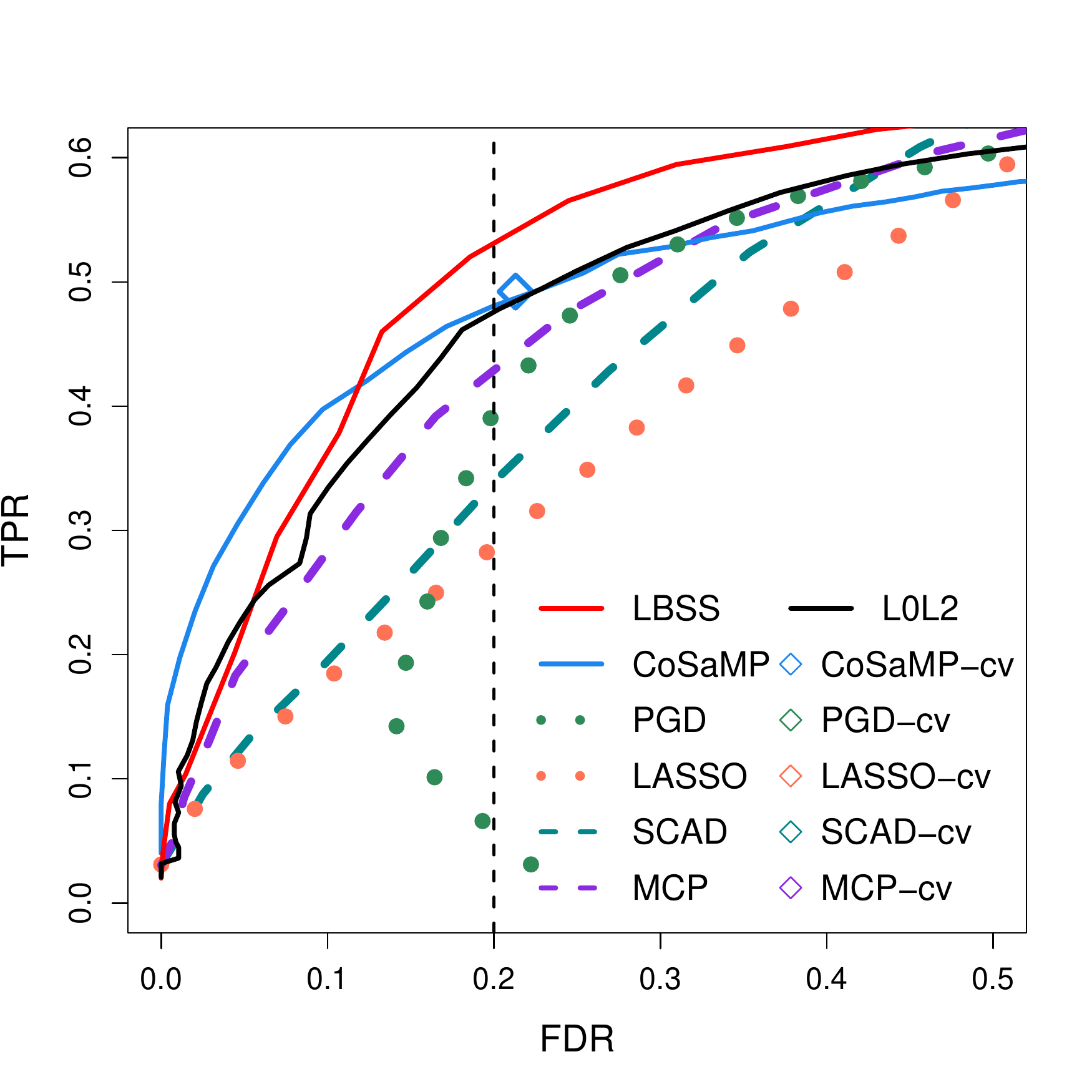} \\
    $\rho = 0, \sigma = 1$ & $\rho = 0.5, \sigma = 1$ & $\rho = 0.8, \sigma = 1$ \\
      \end{tabular}
      \caption{\footnotesize{FDR-TPR paths under autoregressive design. We set $\Sigma_{jk} = \rho^{|j-k|}$ for $\rho\in\{0,0.5,0.8\}$ and $\sigma \in \{0.5, 1\}$. The diamonds represent the solutions of different methods that are tuned by $10$-fold CV. The results are obtained over $100$ Monte Carlo repetitions by randomly generating the design $\bX$, the coefficient vector $\bbeta^*$, the noise $\bepsilon$ and computing $\by = \bX\bbeta^* + \bepsilon$.}}
      \label{fig:ar}
\end{figure}
%\setlength{\tabcolsep}{0pt}
% \begin{figure}
%       \centering
%       \setlength\tabcolsep{0pt}
% 	\renewcommand{\arraystretch}{-5}      
%       \begin{tabular}{cc}
%       \includegraphics[width=6cm]{11a} &  \includegraphics[width=6cm]{11d} \\
%       $\rho = 0, \sigma = 0.5$ & $\rho = 0, \sigma = 1$ \\
%       \includegraphics[width=6cm]{11b}  & \includegraphics[width=6cm]{11e} \\
%       $\rho = 0.5, \sigma = 0.5$ & $\rho = 0.5, \sigma = 1$ \\
% 	 \includegraphics[width=6cm]{11c}  & \includegraphics[width=6cm]{11f} \\      
%       $\rho = 0.8, \sigma = 0.5$ & $\rho = 0.8, \sigma = 1$ 
%       \end{tabular}
%       \caption{\footnotesize{FDR-TPR paths under autoregressive design. We set $\Sigma_{jk} = \rho^{|j-k|}$ for $\rho\in\{0,0.5,0.8\}$ and $\sigma \in \{0.5, 1\}$. The diamonds represent the solutions of different methods that are tuned by $10$-fold CV. The results are obtained over $100$ Monte Carlo repetitions by randomly generating the design $\bX$, the coefficient vector $\bbeta^*$, the noise $\bepsilon$ and computing $\by = \bX\bbeta^* + \bepsilon$.}}
%       \label{fig:ar}
% \end{figure}
\begin{itemize}
      \item[(i)] In the independent settings ($\rho=0$), LBSS, PGD and L0L2 perform similarly and outperform the other approaches on the early solution path. In particular, when $\rho = 0$ and $\sigma = 0.5$, both LBSS and PGD exhibit ``$\Gamma$"-shaped FDR-TPR paths. CoSaMP has no false discovery until TPR exceeds $80\%$ but then recruits true variables in a relatively slow manner that
      prohibits it from selecting all the true variables even when the FDR is as high as $50\%$. LASSO instead has false discoveries relatively early on the solution path.

      \item[(ii)] As the correlation between features increases, i.e., when $\rho\in\{0.5,0.8\}$, PGD performs worse. CoSaMP and L0L2, in contrast, still have comparable early selection performance as LBSS. When $\rho = 0.8$ and $\sigma=0.5$, CoSaMP appears to be more robust against strong design dependence than LBSS, but still cannot achieve as high power as LBSS.  Overall, LBSS performs the best in terms of the FDR-TPR tradeoff on the early solution path.

      \item[(iii)] Regarding the TPR performance, when FDR is controlled at $20\%$, which is a widely used level, LBSS always enjoys the highest TPR.  
      %In the low noise setting ($\sigma = 0.5$),  CoSaMP always exhibits a ``$\Gamma$''-shaped FDR-TPR curve, while other approaches have false discoveries on their early solution paths, especially in presence of high correlation among features (e.g., $\rho = 0.8$). Even in the high noise setting ($\sigma = 1$), CoSaMP has remarkably low FDR on its early solution path. It is clear that false discoveries occur earlier in the competing methods than  {CoSaMP} in this case. 
      
      %      \item[(iii)] In all the six plots of Figure \ref{fig:ar}, the ten-fold CV points of  {CoSaMP} have substantially lower FDR than those of the other methods. In particular, the CV solutions of CoSaMP maintain FDR below $20\%$ in all but the very last case, where $\rho = 0.8$ and $\sigma = 1$. In contrast, the FDR of the CV solutions of the penalized approaches is always around $40\%$. This suggests that compared with CoSaMP, the penalized approaches tend to select more spurious variables, which might be correlated with the signal, to capture the signal.
\end{itemize}

\noindent
\textsc{Case 2: Equicorrelated design.} 
\begin{figure}[t]
      \centering
      \setlength\tabcolsep{0pt}
	\renewcommand{\arraystretch}{0}        
      \begin{tabular}[b]{cc}
              \multicolumn{2}{c}{\includegraphics[width=8cm]{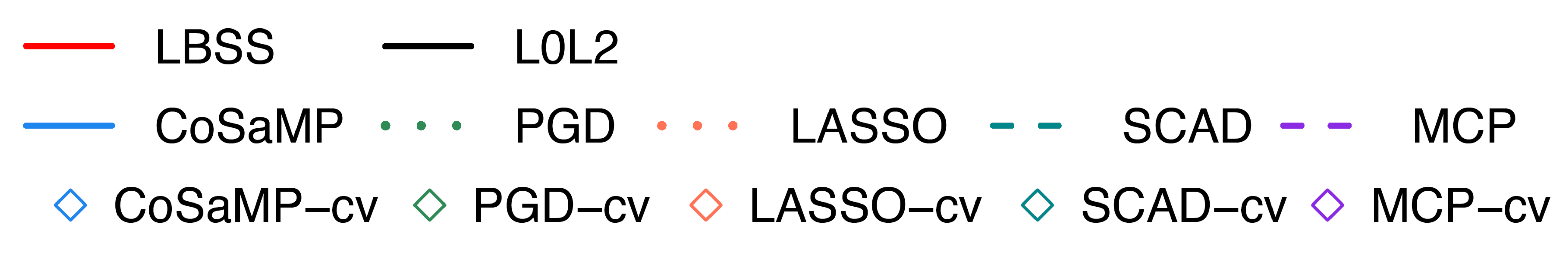}} \\
      \includegraphics[height=5cm]{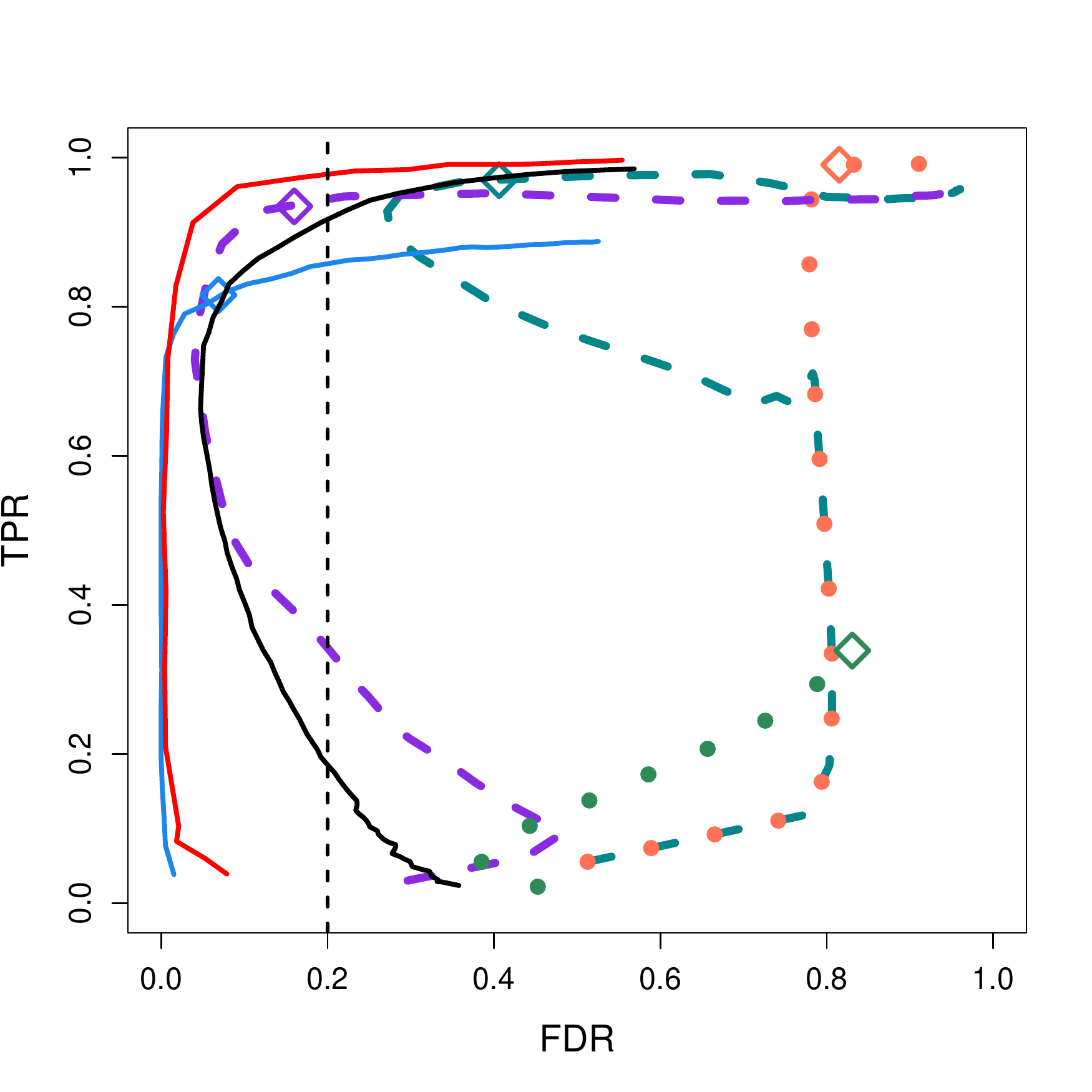} &  \includegraphics[height=5cm]{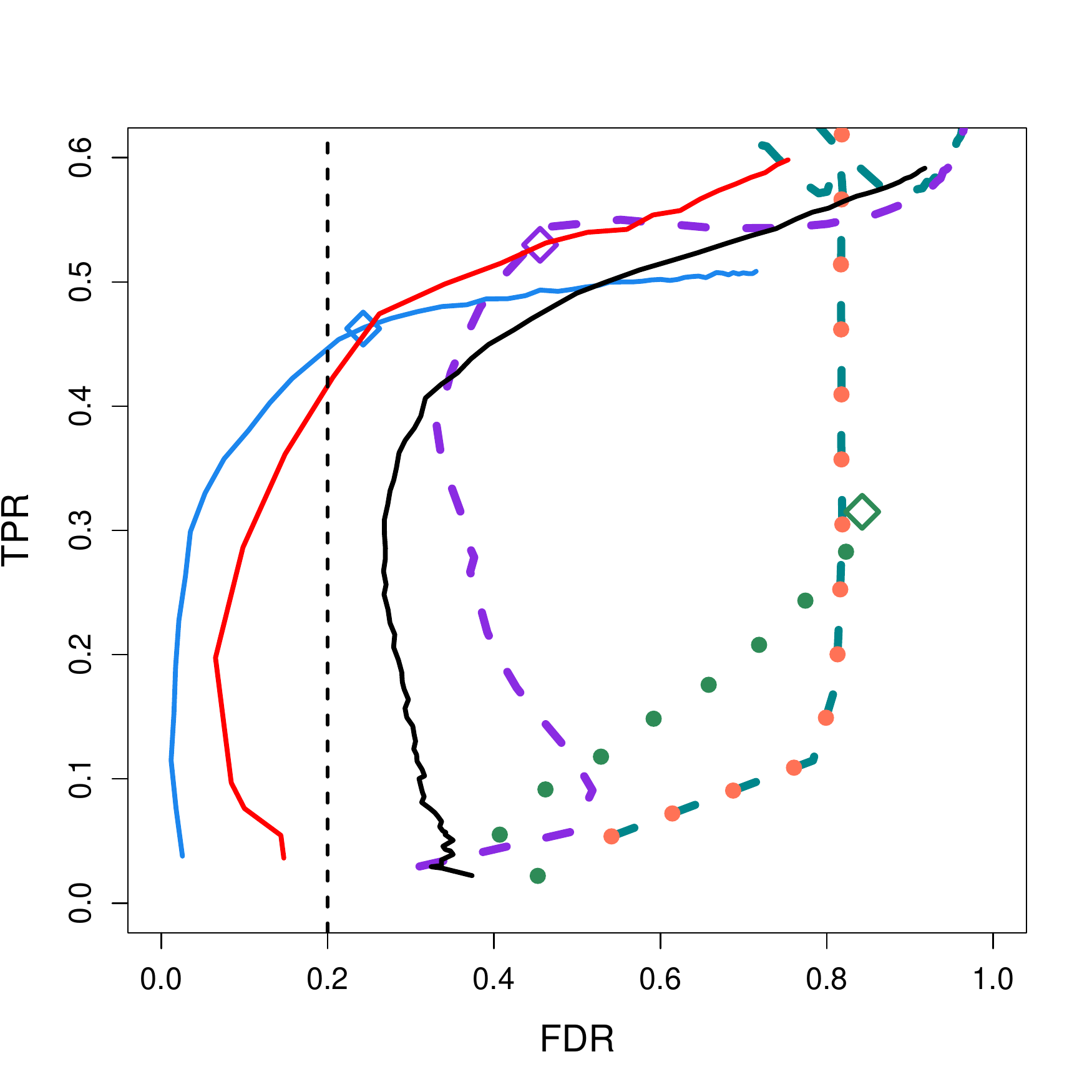} \\
      \\ 
      $\rho = 0.5, \sigma = 0.5$ &  $\rho = 0.5, \sigma = 1$ \\
      \\
      \includegraphics[height=5cm]{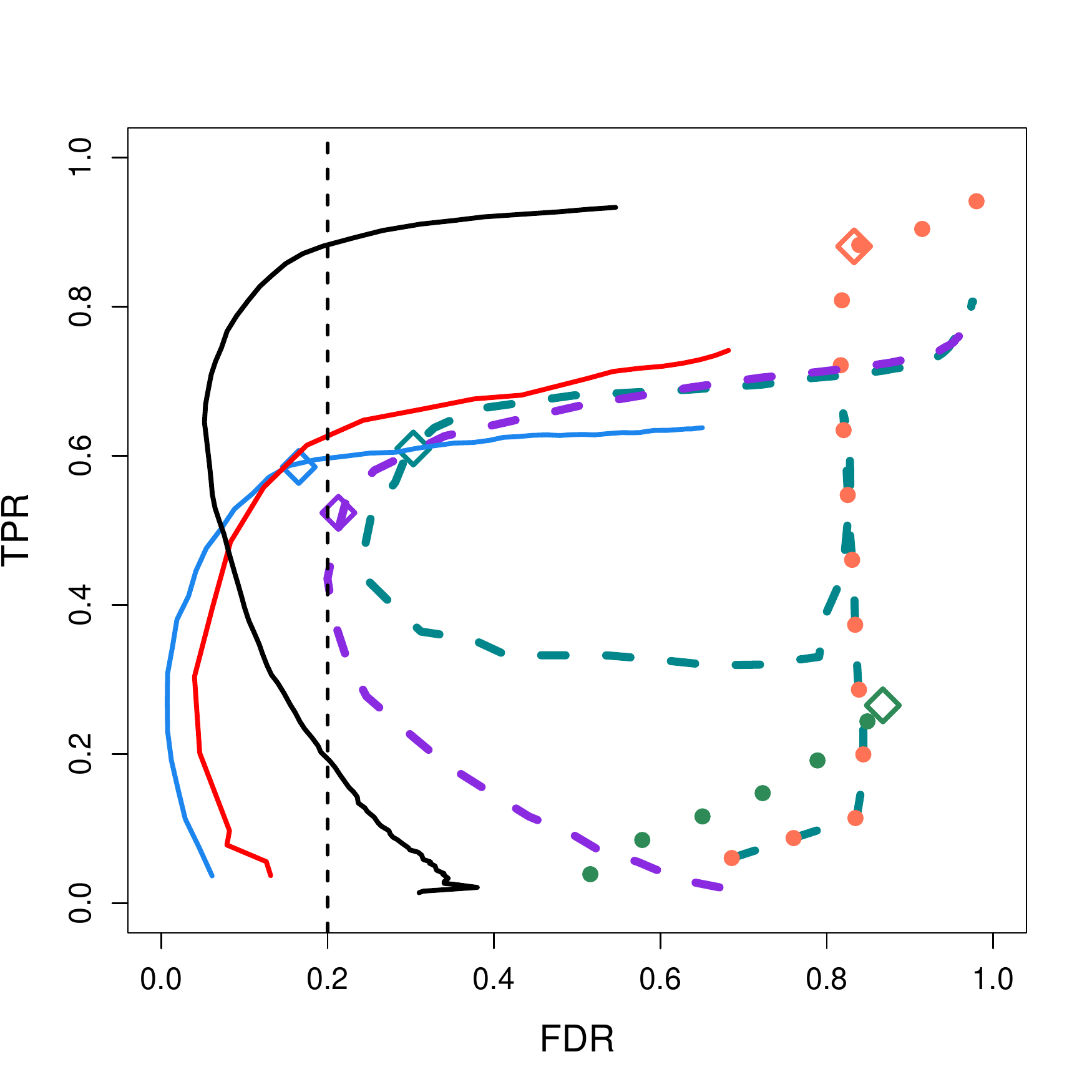} &  \includegraphics[height=5cm]{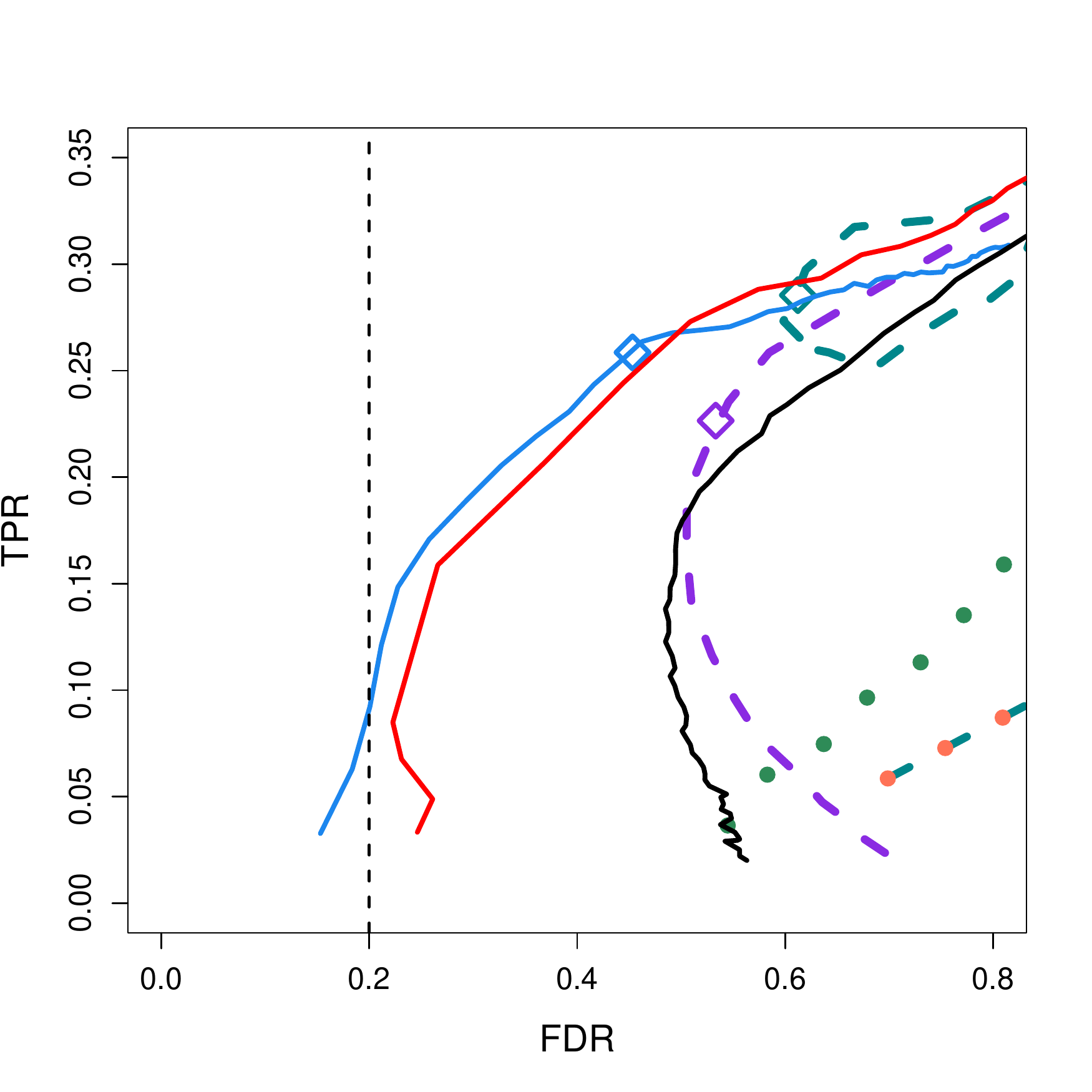}  \\
      \\
      $\rho = 0.8, \sigma = 0.5$ &  $\rho = 0.8, \sigma = 1$ \\
      \end{tabular}
      \caption{\footnotesize{FDR-TPR paths under equicorrelated design. We set $\bSigma = \rho\textbf{1}\textbf{1}^\top + (1-\rho)\bI$ for $\rho \in\{0.5,0.8\}$ and $\sigma \in \{0.5, 1\}$. 
      %We compute the FDR-TPR paths of  {SCAD, LASSO, MCP} by varying $\lambda$ and  those of {CoSaMP} by varying $\pi$. 
      The diamond points represent the $10$-fold CV solutions.  }}
      \label{fig:equi}
\end{figure}
We consider $\bSigma = \rho\mathbf{1}\mathbf{1}^{\top}+(1-\rho)\bI$ with $\rho \in \{0.5, 0.8\}$ and two noise levels $\sigma\in\{0.5, 1\}$.
This is a much more challenging case than the previous one because of constant correlation. 
The FDR-TPR paths are presented in Figure \ref{fig:equi}.
We have the following observations:
\begin{itemize}
      \item[(i)]  {CoSaMP} and LBSS maintain to yield a nearly vertical FDR-TPR path at the early stage when $\rho = 0.5$ and $\sigma = 0.5$, meaning that most of their early discoveries are genuine. However, with higher design correlation or stronger noise, their performance deteriorates.
      
      \item[(ii)] Compared with the case of autoregressive design (Case 1), the strengthened collinearity here hurts the overall performance of all the methods. For example, fixing $\rho = 0.8$ and $\sigma = 0.5$ and requiring FDR below $20\%$, one can see that CoSaMP can hit over $80\%$ TPR in Case 1 but less than $60\%$ TPR in Case 2, and that LBSS hits over $90\%$ TPR in Case 1 but only a little over $60\%$ in Case 2. 
    
      \item[(iii)] LASSO, SCAD and MCP select much more false variables than LBSS and CoSaMP on their early solution path. However, interestingly, at a latter stage, the nonconvex penalty in SCAD and MCP corrects the models by replacing the false discoveries with true variables, leading to a north-west pivot of the path as  $\lambda$ decreases. LASSO instead does not exhibit such correction: its FDR-TPR path keep moving in the north-east direction as $\lambda$ decreases, meaning that its FDR keeps increasing. PGD totally breaks down in such challenging cases with strongly dependent designs. 
      
      \item[(iv)] The performance of L0L2 is outstanding when $\rho = 0.8$ and $\sigma = 0.5$. In spite of the false discoveries on the early path, L0L2 gradually eliminates them and recruits true variables as $\lambda$ decreases, so that in the latter part of the path, L0L2 can achieve more than $80\%$ TPR given FDR is below $20\%$. We attribute this advantage to the $L_2$ regularization of L0L2 that guards against strong correlation among the features.
 %     \item[(iii)] The CV solution of CoSaMP has slightly lower FDR but also slightly lower TPR than those of MCP and SCAD. The CV solution of {LASSO} exhibits much higher TPR at the cost of including a great number of spurious variables.  \\
\end{itemize}

\subsection{Semi-simulation with the skin cutaneous melanoma dataset}\label{sec:realpath}
%\noindent
%\textsc{Case 3 (real design)}\\
In this section, we take our design matrix $\bX$ from the skin cutaneous melanoma dataset in the Cancer Genome Atlas 
(\href{https://cancergenome.nih.gov/}{https://cancergenome.nih.gov/}), which provides comprehensive profiling data on more than thirty cancer types \citep{sun2019counting}. The dataset contains $p = 20,351$ items of mRNA expression data of $n = 469$ patients. To choose reasonable locations of true variables, we adopt the top 20 genes that are found highly associated with cutaneous melanoma according to meta-analysis of 145 papers \citep{chatzinasiou2011comprehensive}. One can find a list of these genes at \href{http://bioserver-3.bioacademy.gr/Bioserver/melGene/}{http://bioserver-3.bioacademy.gr/Bioserver/melGene/}.
We conduct a semi-simulation using this real design matrix.
Fixing $\mS^{\ast}$ to be the locations of the top 20 genes, we generate $\bbeta^{\ast}$ by letting $\beta_j^{\ast}$ = $0$ for $j\in(\mS^{\ast})^c$ and  $\{(\beta_j^{\ast}/\beta_{\min})-1\}_{j\in\mS^{\ast}}\overset{\text{i.i.d.}}{\sim}\chi_1^2$, where $\beta_{\min} \in\{1,0.5,0.25\}$. Then we generate the response vector by letting $\by = \wt\bX\bbeta^{\ast}+\bepsilon$, where $\wt \bX$ is the standardized $\bX$, and where $\bepsilon\sim\cN(\mathbf{0}, \bI)$. 

Figure \ref{fig:real} presents the solution paths of all the methods investigated in Section \ref{sec:simu}. We have the following observations:

\begin{figure}
      \centering
      \setlength\tabcolsep{-2pt}
		\renewcommand{\arraystretch}{0}      
      \begin{tabular}{ccc}
      \multicolumn{3}{c}{\includegraphics[width=8cm]{legend}} \\
      \includegraphics[height=4.9cm]{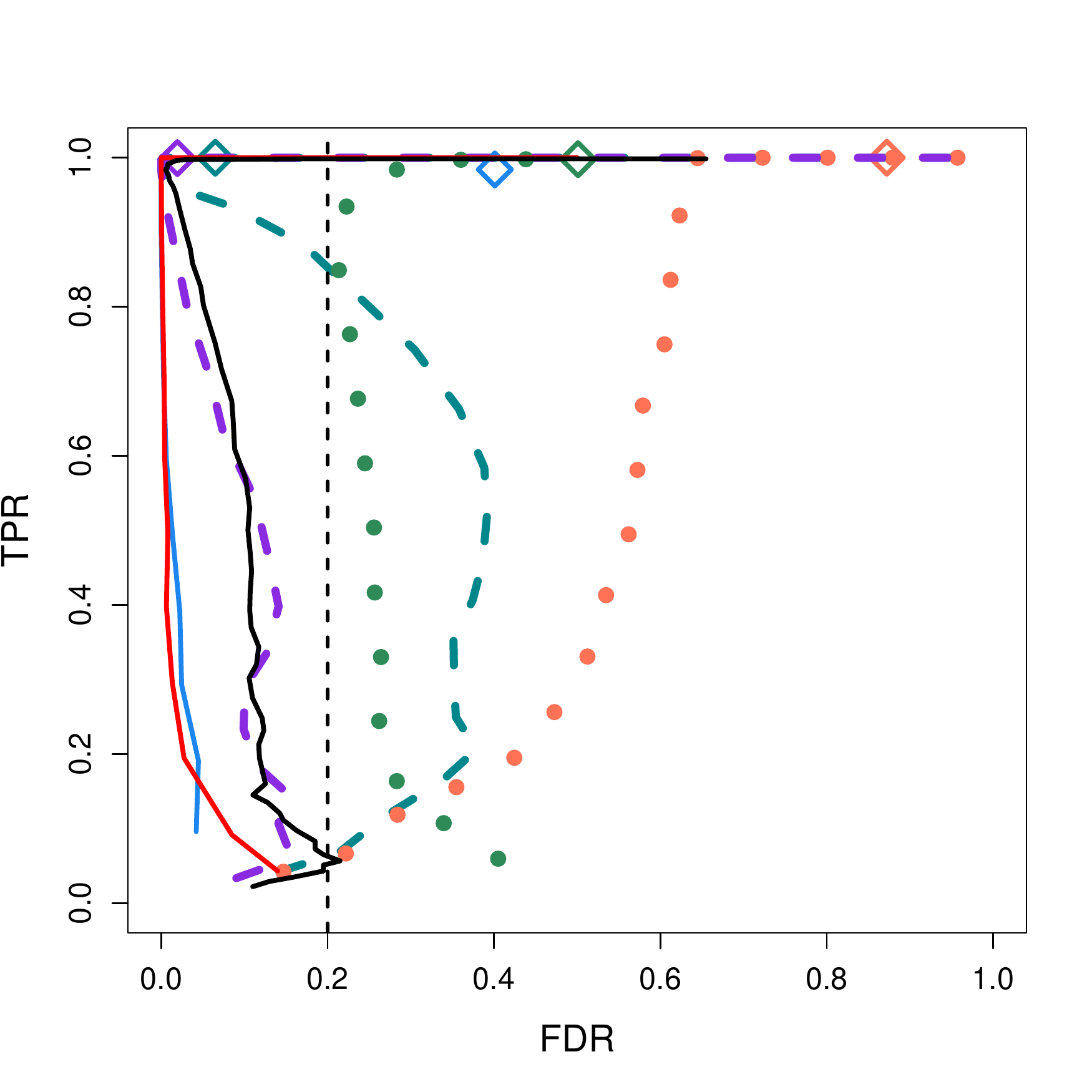} & \includegraphics[height=4.9cm]{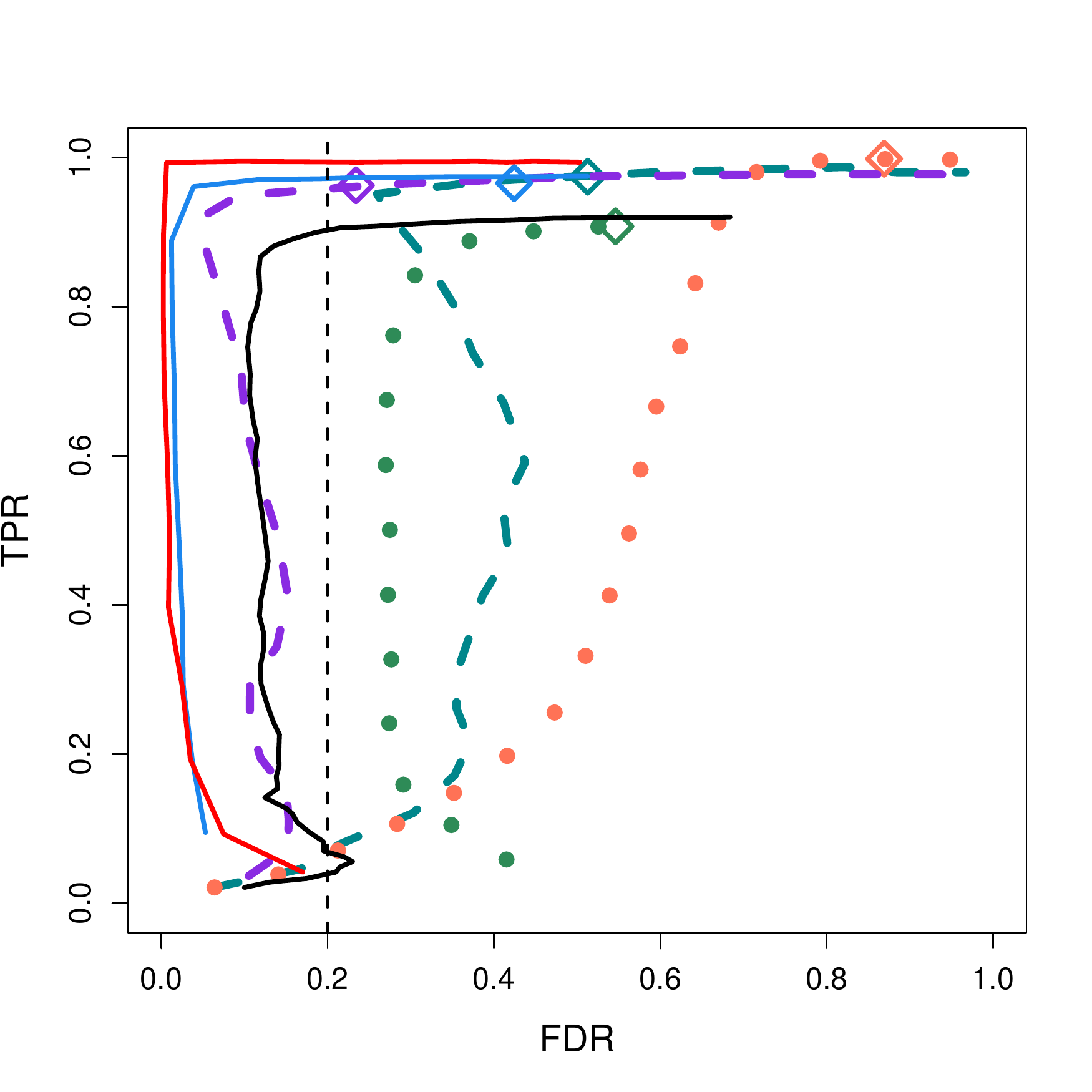} & \includegraphics[height=4.9cm]{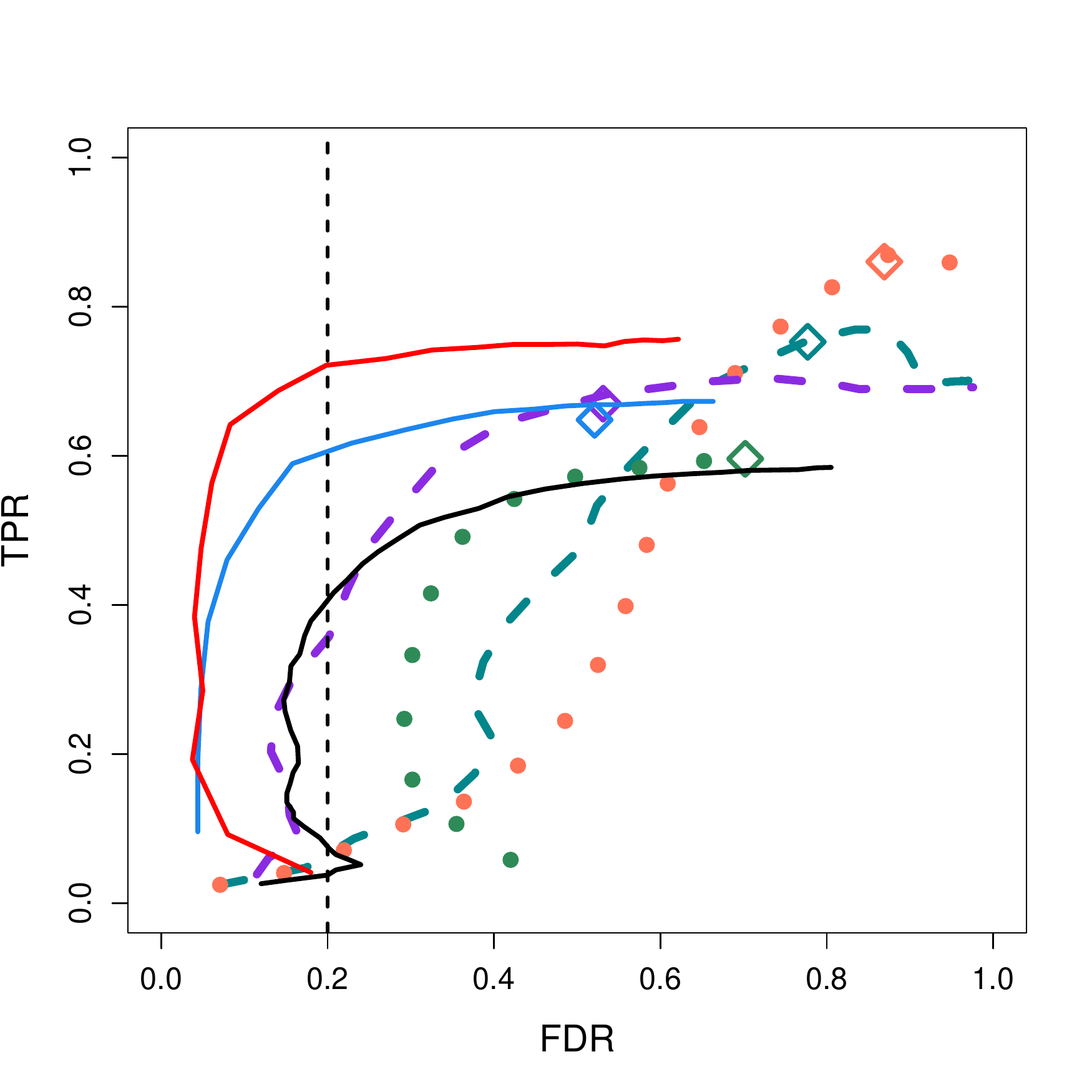}  \\
      $\beta_{\min} = 1$ & $\beta_{\min} = 0.5$ & $\beta_{\min} = 0.25$
      \end{tabular}
      \caption{FDR-TPR paths under real data design. We set the noise level $\sigma = 1$ and choose $\beta_{\min} \in \{1,0.5,0.25\}$.  The diamond points represent the $10$-fold CV solutions. }
      \label{fig:real}
\end{figure}
\begin{itemize}
      \item[(i)] Under the setups of relatively high signal-to-noise ratios ($\beta_{\min}=1,0.5$), the FDR-TPR paths of both LBSS and {CoSaMP} are nearly in the   ``$\Gamma$'' shape, while the other methods have false discoveries on their early solution paths.  In particular, when FDR is controlled at $20\%$, which is a widely used level, LBSS and CoSaMP always have the highest TPR. Similarly to Case 2 of Section \ref{sec:simu}, SCAD, MCP and L0L2 correct their models at a latter stage by substituting false variables with true ones. The FDR of LASSO keeps increasing as $\lambda$ decreases. 
      
     \item[(ii)]  Regarding the $10$-fold CV solutions, we see that all the methods with visible CV points have a non-negligible number of false variables in order to improve prediction power. The CV points of  {SCAD} and  {MCP} sometimes have lower FDR. {LASSO} tends to select false variables more than the other approaches.   
\end{itemize}

%\begin{itemize}
%	\setlength{\itemsep}{0pt}
%	\setlength{\parsep}{0pt}
%	\setlength{\parskip}{0pt}
%	\item Despite its impressive numerical performances, we \textbf{do not have any theoretical guarantee} that  {LOCK} can control FDR although we have FDR control level $q$ as an input parameter. 
%	\item We hope our work here can give some inspirations on future work in FDR control problems.
%\end{itemize}
%In practice, however, the true sparsity is inaccessible and a problem of interest is to strictly control FDR. 
%We compare the FDR, Dir.FDR, power and computational time of the following four methods : Model-X Knockoff, FCD, LASSO + Knockoff and LOCK. report their . 

\section{Discussion} 

In this paper, we establish the sufficient and (near) necessary conditions for BSS to achieve sure selection throughout its early path. We show that the underpinning quantity is the minimum projected signal margin that characterizes the fundamental gap of fitting power between sure selection models and spurious ones. This margin is robust against collinearity of the design, justifying the low FDP of the early path of LBSS that we observed under highly correlated designs. The current results motivate the following two questions that we wish to answer in our future research: 
\begin{enumerate}
	\item Previous works \cite{NTr09} and \cite{jain2014iterative} on IHT rely on restricted strong convexity and smoothness (or their variants) of the loss function to develop its optimization guarantee. Given that these conditions are dispensable for the statistical properties of BSS, and that IHT is shown to be closely related with BSS, it is natural to ask if these conditions are necessary for IHT to yield desirable statistical properties, say sure early selection. In particular, can we achieve any FDP guarantee for the early path of CoSaMP with only a lower bound of the minimum projected signal margin? Answering these questions should entail more delicate optimization analysis. 
	
	\item What are the sufficient and necessary conditions for BSS to achieve sure early selection under linear sparsity, i.e., $s ^ * / p$ tends to a constant? 

    \item Under more general contexts (say generalized linear models), what are the counterpart sufficient and necessary conditions for $\ell_0$-constrained methods to achieve sure early selection?
\end{enumerate}

\section{Acknowledgement}

We appreciate the suggestions and guidance from Prof. Dimitris Bertsimas at MIT that substanitally improved the quality of this paper. We also appreciate the funding support of NSF DMS-2015366.

%\newpage 
%\spacingset{1.2} % DON'T change the spacing!

\bibliographystyle{ims.bst} % Style BST file (imsart-number.bst or imsart-nameyear.bst)
\bibliography{ref.bib}  

\begin{thebibliography}{44}
\expandafter\ifx\csname natexlab\endcsname\relax\def\natexlab#1{#1}\fi
\expandafter\ifx\csname url\endcsname\relax
  \def\url#1{\texttt{#1}}\fi
\expandafter\ifx\csname urlprefix\endcsname\relax\def\urlprefix{URL }\fi

\bibitem[{Akaike(1974)}]{akaike1974new}
\textsc{Akaike, H.} (1974).
\newblock A new look at the statistical model identification.
\newblock \textit{IEEE transactions on Automatic Control} \textbf{19} 716--723.

\bibitem[{Akaike(1998)}]{akaike1998information}
\textsc{Akaike, H.} (1998).
\newblock Information theory and an extension of the maximum likelihood
  principle.
\newblock In \textit{Selected papers of {H}irotugu {A}kaike}. Springer,
  199--213.

\bibitem[{Bertsimas et~al.(2016)Bertsimas, King and
  Mazumder}]{bertsimas2016best}
\textsc{Bertsimas, D.}, \textsc{King, A.} and \textsc{Mazumder, R.} (2016).
\newblock Best subset selection via a modern optimization lens.
\newblock \textit{The Annals of Statistics}  813--852.

\bibitem[{Bertsimas et~al.(2020)Bertsimas, Van~Parys
  et~al.}]{bertsimas2020sparse}
\textsc{Bertsimas, D.}, \textsc{Van~Parys, B.} \textsc{et~al.} (2020).
\newblock Sparse high-dimensional regression: Exact scalable algorithms and
  phase transitions.
\newblock \textit{The Annals of Statistics} \textbf{48} 300--323.

\bibitem[{Blumensath and Davies(2008)}]{BDa08}
\textsc{Blumensath, T.} and \textsc{Davies, M.~E.} (2008).
\newblock Iterative thresholding for sparse approximations.
\newblock \textit{Journal of Fourier Analysis and Applications} \textbf{14}
  629--654.

\bibitem[{Blumensath and Davies(2009)}]{BDa09}
\textsc{Blumensath, T.} and \textsc{Davies, M.~E.} (2009).
\newblock Iterative hard thresholding for compressed sensing.
\newblock \textit{Applied and Computational Harmonic Analysis} \textbf{27}
  265--274.

\bibitem[{Chatzinasiou et~al.(2011)Chatzinasiou, Lill, Kypreou, Stefanaki,
  Nicolaou, Spyrou, Evangelou, Roehr, Kodela, Katsambas
  et~al.}]{chatzinasiou2011comprehensive}
\textsc{Chatzinasiou, F.}, \textsc{Lill, C.~M.}, \textsc{Kypreou, K.},
  \textsc{Stefanaki, I.}, \textsc{Nicolaou, V.}, \textsc{Spyrou, G.},
  \textsc{Evangelou, E.}, \textsc{Roehr, J.~T.}, \textsc{Kodela, E.},
  \textsc{Katsambas, A.} \textsc{et~al.} (2011).
\newblock Comprehensive field synopsis and systematic meta-analyses of genetic
  association studies in cutaneous melanoma.
\newblock \textit{Journal of the National Cancer Institute} \textbf{103}
  1227--1235.

\bibitem[{Chen et~al.(1998)Chen, Donoho and Saunders}]{chen1998atomic}
\textsc{Chen, S.~S.}, \textsc{Donoho, D.~L.} and \textsc{Saunders, M.~A.}
  (1998).
\newblock Atomic decomposition by basis pursuit.
\newblock \textit{SIAM Journal on Scientific Computing} \textbf{20} 33--61.

\bibitem[{Fan et~al.(2020)Fan, Guo and Zhu}]{FGZ20}
\textsc{Fan, J.}, \textsc{Guo, Y.} and \textsc{Zhu, Z.} (2020).
\newblock When is best subset selection the "best"?
\newblock \textit{arXiv preprint arXiv:2007.01478} .

\bibitem[{Fan and Li(2001)}]{fan2001variable}
\textsc{Fan, J.} and \textsc{Li, R.} (2001).
\newblock Variable selection via nonconcave penalized likelihood and its oracle
  properties.
\newblock \textit{Journal of the American statistical Association} \textbf{96}
  1348--1360.

\bibitem[{Fan et~al.(2018)Fan, Liu, Sun and Zhang}]{fan2018lamm}
\textsc{Fan, J.}, \textsc{Liu, H.}, \textsc{Sun, Q.} and \textsc{Zhang, T.}
  (2018).
\newblock I-lamm for sparse learning: Simultaneous control of algorithmic
  complexity and statistical error.
\newblock \textit{The Annals of Statistics} \textbf{46} 814--841.

\bibitem[{Fan and Lv(2008)}]{fan2008sure}
\textsc{Fan, J.} and \textsc{Lv, J.} (2008).
\newblock Sure independence screening for ultrahigh dimensional feature space.
\newblock \textit{Journal of the Royal Statistical Society: Series B
  (Statistical Methodology)} \textbf{70} 849--911.

\bibitem[{Fan and Lv(2011)}]{fan2011nonconcave}
\textsc{Fan, J.} and \textsc{Lv, J.} (2011).
\newblock Nonconcave penalized likelihood with np-dimensionality.
\newblock \textit{IEEE Transactions on Information Theory} \textbf{57}
  5467--5484.

\bibitem[{Fan et~al.(2004)Fan, Peng et~al.}]{fan2004nonconcave}
\textsc{Fan, J.}, \textsc{Peng, H.} \textsc{et~al.} (2004).
\newblock Nonconcave penalized likelihood with a diverging number of
  parameters.
\newblock \textit{The Annals of Statistics} \textbf{32} 928--961.

\bibitem[{Ge et~al.(2019)Ge, Li, Jiang, Liu, Zhang, Wang and Zhao}]{GLJ19}
\textsc{Ge, J.}, \textsc{Li, X.}, \textsc{Jiang, H.}, \textsc{Liu, H.},
  \textsc{Zhang, T.}, \textsc{Wang, M.} and \textsc{Zhao, T.} (2019).
\newblock Picasso: A sparse learning library for high dimensional data analysis
  in r and python.
\newblock \textit{Journal of Machine Learning Research} \textbf{20} 1692--1696.

\bibitem[{Guo et~al.(2020)Guo, Zhu and Fan}]{guo2020best}
\textsc{Guo, Y.}, \textsc{Zhu, Z.} and \textsc{Fan, J.} (2020).
\newblock Best subset selection is robust against design dependence.
\newblock \textit{arXiv preprint arXiv:2007.01478} .

\bibitem[{Hastie et~al.(2020)Hastie, Tibshirani and
  Tibshirani}]{hastie2017extended}
\textsc{Hastie, T.}, \textsc{Tibshirani, R.} and \textsc{Tibshirani, R.~J.}
  (2020).
\newblock Best subset, forward stepwise or lasso? analysis and recommendations
  based on extensive comparisons.
\newblock \textit{Statistical Science} \textbf{35} 579--592.

\bibitem[{Hazimeh and Mazumder(2020)}]{hazimeh2020fast}
\textsc{Hazimeh, H.} and \textsc{Mazumder, R.} (2020).
\newblock Fast best subset selection: Coordinate descent and local
  combinatorial optimization algorithms.
\newblock \textit{Operations Research} \textbf{68} 1517--1537.

\bibitem[{Hazimeh et~al.(2022)Hazimeh, Mazumder and Nonet}]{hazimeh2022l0learn}
\textsc{Hazimeh, H.}, \textsc{Mazumder, R.} and \textsc{Nonet, T.} (2022).
\newblock L0learn: A scalable package for sparse learning using l0
  regularization .

\bibitem[{Jain et~al.(2014)Jain, Tewari and Kar}]{jain2014iterative}
\textsc{Jain, P.}, \textsc{Tewari, A.} and \textsc{Kar, P.} (2014).
\newblock On iterative hard thresholding methods for high-dimensional
  m-estimation.
\newblock In \textit{Advances in Neural Information Processing Systems}.

\bibitem[{Javanmard et~al.(2019)Javanmard, Javadi et~al.}]{javanmard2019false}
\textsc{Javanmard, A.}, \textsc{Javadi, H.} \textsc{et~al.} (2019).
\newblock False discovery rate control via debiased {LASSO}.
\newblock \textit{Electronic Journal of Statistics} \textbf{13} 1212--1253.

\bibitem[{Loh and Wainwright(2015)}]{loh2013regularized}
\textsc{Loh, P.-L.} and \textsc{Wainwright, M.~J.} (2015).
\newblock Regularized m-estimators with nonconvexity: Statistical and
  algorithmic theory for local optima.
\newblock \textit{Journal of Machine Learning Research} \textbf{16} 559--616.

\bibitem[{Loh et~al.(2017)Loh, Wainwright et~al.}]{loh2017support}
\textsc{Loh, P.-L.}, \textsc{Wainwright, M.~J.} \textsc{et~al.} (2017).
\newblock Support recovery without incoherence: A case for nonconvex
  regularization.
\newblock \textit{The Annals of Statistics} \textbf{45} 2455--2482.

\bibitem[{Mallows(1973)}]{Mal73}
\textsc{Mallows, C.~L.} (1973).
\newblock Some comments on $c_p$.
\newblock \textit{Technometrics} \textbf{15} 661--675.

\bibitem[{Meinshausen(2007)}]{Mei07}
\textsc{Meinshausen, N.} (2007).
\newblock Relaxed lasso.
\newblock \textit{Computational Statistics \& Data Analysis} \textbf{52}
  374--393.

\bibitem[{Natarajan(1995)}]{natarajan1995sparse}
\textsc{Natarajan, B.~K.} (1995).
\newblock Sparse approximate solutions to linear systems.
\newblock \textit{SIAM Journal on Computing} \textbf{24} 227--234.

\bibitem[{Needell and Tropp(2009)}]{NTr09}
\textsc{Needell, D.} and \textsc{Tropp, J.~A.} (2009).
\newblock {CoSaMP}: Iterative signal recovery from incomplete and inaccurate
  samples.
\newblock \textit{Applied and Computational Harmonic Analysis} \textbf{26}
  301--321.

\bibitem[{Rudelson et~al.(2013)Rudelson, Vershynin et~al.}]{rudelson2013hanson}
\textsc{Rudelson, M.}, \textsc{Vershynin, R.} \textsc{et~al.} (2013).
\newblock Hanson-wright inequality and sub-gaussian concentration.
\newblock \textit{Electronic Communications in Probability} \textbf{18} 1--9.

\bibitem[{Schwarz et~al.(1978)}]{schwarz1978estimating}
\textsc{Schwarz, G.} \textsc{et~al.} (1978).
\newblock Estimating the dimension of a model.
\newblock \textit{The Annals of Statistics} \textbf{6} 461--464.

\bibitem[{Su et~al.(2017)Su, Bogdan, Candes et~al.}]{su2017false}
\textsc{Su, W.}, \textsc{Bogdan, M.}, \textsc{Candes, E.} \textsc{et~al.}
  (2017).
\newblock False discoveries occur early on the lasso path.
\newblock \textit{The Annals of Statistics} \textbf{45} 2133--2150.

\bibitem[{Su(2018)}]{su2018first}
\textsc{Su, W.~J.} (2018).
\newblock When is the first spurious variable selected by sequential regression
  procedures?
\newblock \textit{Biometrika} \textbf{105} 517--527.

\bibitem[{Sun et~al.(2019)Sun, Zhu, Wang and Zeng}]{sun2019counting}
\textsc{Sun, Q.}, \textsc{Zhu, R.}, \textsc{Wang, T.} and \textsc{Zeng, D.}
  (2019).
\newblock Counting process-based dimension reduction methods for censored
  outcomes.
\newblock \textit{Biometrika} \textbf{106} 181--196.

\bibitem[{Tibshirani(1996)}]{tibshirani1996regression}
\textsc{Tibshirani, R.} (1996).
\newblock Regression shrinkage and selection via the lasso.
\newblock \textit{Journal of the Royal Statistical Society: Series B
  (Methodological)} \textbf{58} 267--288.

\bibitem[{Tikhonov(1943)}]{Tik43}
\textsc{Tikhonov, A.~N.} (1943).
\newblock On the stability of inverse problems.
\newblock In \textit{Dokl. Akad. Nauk SSSR}, vol.~39.

\bibitem[{van Handel(2014)}]{van2014probability}
\textsc{van Handel, R.} (2014).
\newblock Probability in high dimension.
\newblock Tech. rep., PRINCETON UNIV NJ.

\bibitem[{Wainwright(2009)}]{wainwright2009information}
\textsc{Wainwright, M.~J.} (2009).
\newblock Information-theoretic limits on sparsity recovery in the
  high-dimensional and noisy setting.
\newblock \textit{IEEE transactions on information theory} \textbf{55}
  5728--5741.

\bibitem[{Wainwright(2019{\natexlab{a}})}]{wainwright2019high}
\textsc{Wainwright, M.~J.} (2019{\natexlab{a}}).
\newblock \textit{High-dimensional statistics: A non-asymptotic viewpoint},
  vol.~48.
\newblock Cambridge University Press.

\bibitem[{Wainwright(2019{\natexlab{b}})}]{Wai2019}
\textsc{Wainwright, M.~J.} (2019{\natexlab{b}}).
\newblock \textit{High-Dimensional Statistics: A Non-Asymptotic Viewpoint}.
\newblock Cambridge University Press.

\bibitem[{Wang et~al.(2020)Wang, Yang, Bu and Su}]{wang2020complete}
\textsc{Wang, H.}, \textsc{Yang, Y.}, \textsc{Bu, Z.} and \textsc{Su, W.}
  (2020).
\newblock The complete lasso tradeoff diagram.
\newblock \textit{Advances in Neural Information Processing Systems}
  \textbf{33} 20051--20060.

\bibitem[{Zhang et~al.(2010)}]{zhang2010nearly}
\textsc{Zhang, C.-H.} \textsc{et~al.} (2010).
\newblock Nearly unbiased variable selection under minimax concave penalty.
\newblock \textit{The Annals of Statistics} \textbf{38} 894--942.

\bibitem[{Zhao and Yu(2006)}]{zhao2006model}
\textsc{Zhao, P.} and \textsc{Yu, B.} (2006).
\newblock On model selection consistency of lasso.
\newblock \textit{Journal of Machine Learning Research} \textbf{7} 2541--2563.

\bibitem[{Zhu et~al.(2020)Zhu, Wen, Zhu, Zhang and Wang}]{ZWZ20}
\textsc{Zhu, J.}, \textsc{Wen, C.}, \textsc{Zhu, J.}, \textsc{Zhang, H.} and
  \textsc{Wang, X.} (2020).
\newblock A polynomial algorithm for best-subset selection problem.
\newblock \textit{Proceedings of the National Academy of Sciences} \textbf{117}
  33117--33123.

\bibitem[{Zou(2006)}]{zou2006adaptive}
\textsc{Zou, H.} (2006).
\newblock The adaptive lasso and its oracle properties.
\newblock \textit{Journal of the American statistical association} \textbf{101}
  1418--1429.

\bibitem[{Zou and Hastie(2005)}]{zou2005regularization}
\textsc{Zou, H.} and \textsc{Hastie, T.} (2005).
\newblock Regularization and variable selection via the elastic net.
\newblock \textit{Journal of the Royal Statistical Society: Series B
  (Statistical Methodology)} \textbf{67} 301--320.

\end{thebibliography}

\newpage
\begin{appendix}

\section{Pseudocode of the IHT algorithms}

For any $\bv\in\RR^p$ and $r\in\NN$, define
$\cT_{\text{abs}}(\bv, r) := \{j : |v_j| \text{ is among the top $r$ largest values}$ of $\{|v_k|\}_{k=1}^p  \}$.
For any $\bbeta\in\RR^p$, let $\cL(\bbeta)=\sum_{i=1}^n(y_i-\bx_i^\top\bbeta)^2$. We present the pseudocode of PGD and CoSaMP in Algorithms \ref{alg_iht1} and \ref{alg_iht}.

\begin{center}
        \begin{algorithm}
            \caption{PGD($\bX, \by, \wh\bbeta_0, \pi, \eta, \tau$)}\label{alg_iht1}
            \KwIn{Design matrix $\bX$, response $\by$, initial value $\wh\bbeta_0$, projection size $\pi$, step size $\eta$, convergence threshold $\tau>0$.}
            \begin{algorithmic}[1]
                \State ~~ $t$ $\gets$ $0$
                \State ~~ \textbf{repeat}
                \State ~~~~ \hspace*{0.02in} $\wh{\bbeta}_t^{\dagger}
                \gets 
                \wh{\bbeta}_t - \eta \nabla\mL(\wh\bbeta_t) $
                \State ~~~~ \hspace*{0.02in}
                $\mG_t  \gets \mT_{\text{abs}}(\wh{\bbeta}_t^{\dagger},\pi)$
                \State ~~~~ \hspace*{0.02in} $\wh\bbeta_t\gets(\bX_{\mG_t }^{\top}\bX_{\mG_t} )^{+}\bX_{\mG_t }^{\top}\by$
                \State ~~~~ \hspace*{0.02in} $t\gets t+1$
                \State ~~ \textbf{until}
                $\|\wh\bbeta_t-\wh\bbeta_{t-1}\|_2<\tau$
                \State~~ $\wh\bbeta^{\text{pg}}\gets\wh\bbeta_t$  
         \end{algorithmic}
         \KwOut{$\wh\bbeta^{\mathrm{pg}}$ }
        \end{algorithm}
\end{center}

\begin{center}
        \begin{algorithm}
            \caption{CoSaMP($\bX, \by, \wh\bbeta_0, \pi, l, \tau$)}\label{alg_iht}
            \KwIn{Design matrix $\bX$, response $\by$, initial value $\wh\bbeta_0$, projection size $\pi$, expansion size $l$, convergence threshold $\tau>0$.}
            \begin{algorithmic}[1]
                \State ~~ $t$ $\gets$ $0$
                \State ~~ \textbf{repeat}
                \State ~~~~ \hspace*{0.02in} $\mG_t \gets \mT_{\text{abs}}(\nabla\mL(\wh\bbeta_t),l)$
                \State ~~~~ \hspace*{0.02in} $\mS_t^{\dagger}\gets$supp$(\wh\bbeta_t)\cup\mG_t$
                \State ~~~~ \hspace*{0.02in} $\wh\bbeta^{\dagger}_t\gets(\bX_{\mS_t^\dagger}^{\top}\bX_{\mS_t^\dagger})^{+}\bX_{\mS_t^\dagger}^{\top}\by$
                \State ~~~~ \hspace*{0.02in} $\mS_t\gets\mT_{\text{abs}}(\wh\bbeta_t^{\dagger},\pi)$
                \State ~~~~ \hspace*{0.02in} $\wh\bbeta_{t+1}\gets (\bX_{\mS_t}^{\top}\bX_{\mS_t})^{+}\bX_{\mS_t}^{\top}\by$
                \State ~~~~ \hspace*{0.02in} $t\gets t+1$
                \State ~~ \textbf{until}
                $\|\wh\bbeta_t-\wh\bbeta_{t-1}\|_2<\tau$
                \State~~ $\wh\bbeta^{\text{cs}}\gets\wh\bbeta_t$            
         \end{algorithmic}
         \KwOut{$\wh\bbeta^{\mathrm{cs}}$ }
        \end{algorithm}
\end{center}

\section{Proofs of technical results}
\subsection{Proof of Theorem 2.1}
For any $\cS \in [p]$, define $\bgamma_{\cS} := n ^{-1 / 2} (\bI - \bP_{\bX_{\cS}})\bmu ^ *$. 
	For any $\cS \in \AA(s)$, we have 
	\beq
		\label{eq:rss}
		\begin{aligned}
		n^{-1} \cL_\mS & = n^{-1}\{\by^{\top}(\bI-\bP_{\bX_\mS})\by  \} = n^{-1} (\bbeta^{\ast\top}\bX^{\top}+\bepsilon^{\top})(\bI-\bP_{\bX_\mS})(\bX\bbeta^{\ast}+\bepsilon) \\
		& =\|\bgamma_\mS\|_2^2+\frac{2\bgamma_{\mS}^{\top}\bepsilon}{n^{1/2}}+\frac{1}{n}\bepsilon^{\top}(\bI-\bP_{\bX_\mS})\bepsilon.
		\end{aligned}
	\eeq
	Similarly, we have that 
	\beq
		\label{eq:rss_phis}
		\begin{aligned}
			n^{-1} \cL_{\Phi(\cS)} & =\|\bgamma_{\Phi(\cS)}\|_2^2+\frac{2\bgamma_{\Phi(\cS)}^{\top}\bepsilon}{n^{1/2}}+\frac{1}{n}\bepsilon^{\top}(\bI-\bP_{\bX_{\Phi(\cS)}})\bepsilon 
		\end{aligned}
	\eeq
	Combining the two displays above yields that 
	\beq
		\label{eq:loss_gap}
		\begin{aligned}
			n^{-1}(\cL_\mS-\cL_{\Phi(\cS)})  = &  \|\bgamma_\cS\|_2^2-   \|\bgamma_{\Phi(\cS) }\|_2^2  + \frac{2
			 (\bgamma_{\cS} - \bgamma_{\Phi(\cS)}) ^ {\top} \bepsilon}{n ^ {1 / 2}} - \frac{1}{n}\bepsilon^{\top}(\bP_{\bX_{\mS}}-\bP_{\bX_{\Phi(\cS)}})\bepsilon \\
                  = &  \eta (\|\bgamma_\cS\|_2^2-   \|\bgamma_{\Phi(\cS) }\|_2^2) \\
                   &+ 2^{-1}(1-\eta) (\|\bgamma_\cS\|_2^2-   \|\bgamma_{\Phi(\cS) }\|_2^2) + \frac{2
			 (\bgamma_{\cS} - \bgamma_{\Phi(\cS)}) ^ {\top} \bepsilon}{n ^ {1 / 2}}  \\
	           &+ 2^{-1}(1-\eta) (\|\bgamma_\cS\|_2^2-   \|\bgamma_{\Phi(\cS) }\|_2^2) - \frac{1}{n}\bepsilon^{\top}(\bP_{\bX_{\mS}}-\bP_{\bX_{\Phi(\cS)}})\bepsilon .
		 \end{aligned}
	\eeq
	We wish to show that the following two bounds hold with high probability: 
	\beq
		\label{eq:proof1_target1}		
		\inf_{\cS \in \AA(s)} \biggl\{2^{-1}(1-\eta) (\|\bgamma_\cS\|_2^2-   \|\bgamma_{\Phi(\cS) }\|_2^2) + \frac{2(\bgamma_{\cS} - \bgamma_{\Phi(\cS)}) ^ {\top} \bepsilon}{n ^ {1 / 2}}\biggr\}  > 0 
	\eeq
	and 
	\beq
		\label{eq:proof1_target2}		
		\inf_{\cS \in \AA(s)} \biggl\{2^{-1}(1-\eta) (\|\bgamma_\cS\|_2^2-   \|\bgamma_{\Phi(\cS) }\|_2^2) - \frac{1}{n}\bepsilon^{\top}(\bP_{\bX_{\mS}}-\bP_{\bX_{\Phi(\cS)}})\bepsilon \biggr\}> 0. 
	\eeq
	
	~\\
	
	We first consider \eqref{eq:proof1_target1}. Given that $\|\bepsilon\|_{\psi_2} \le \sigma$, applying Hoeffding's inequality yields that for any $x > 0$, 
	\beq
		\PP\bigl\{\bigl|(\bgamma_{\cS} - \bgamma_{\Phi(\cS)}) ^ {\top} \bepsilon\bigr| > x\sigma \ltwonorm{\bgamma_{\cS} - \bgamma_{\Phi(\cS)}}  \bigr\}\le 2e^{-x^2/2}.
	\eeq
	Note that $\AA_t(s) := \{\cS \in \AA(s): |\cS \backslash \Phi(\cS)| = t\}$ for $t \in [s]$. Then $\AA(s)=\cup_{t\in[s]}\AA_t(s)$.
	A union bound over $\cS \in \AA_t(s)$ yields that
	\beq
	       \label{equ:hoeffding_thm1}
			\PP\biggl\{\exists \cS \in \AA_{t}(s) \text{ s.t. }  \frac{2 |(\bgamma_{\cS} - \bgamma_{\Phi(\cS)}) ^ {\top} \bepsilon|}{n ^ {1 / 2}} \ge \frac{2x\sigma \ltwonorm{\bgamma_{\cS}-\bgamma_{\Phi(\cS)}}}{n ^ {1 / 2}}  \biggr\}
		     \le  2|\AA_t(s)|e ^ {-x ^ 2 / 2}.
	\eeq
	Writing $\xi_0 = \frac{(1-\eta)\psm_*(s)}{8\sigma(\log p)^{1/2}}$ and substituting $x=2\xi_0(t\log p)^{1/2}$ into \eqref{equ:hoeffding_thm1}, we have that 
	\beq
	\begin{aligned}
		\label{ineq:linear_term_bound}
	     &	\PP\biggl\{\exists \cS \in \AA_{t}(s)\text{ s.t. } \frac{2|(\bgamma_{\cS} - \bgamma_{\Phi(\cS)}) ^ {\top} \bepsilon|}{n ^ {1 / 2}} \ge 4\xi_0  \sigma\ltwonorm{\bgamma_{\cS}-\bgamma_{\Phi(\cS)}}\biggl(\frac{t \log p}{n}\biggr) ^ {1 / 2}  \biggr\}   \le  2|\AA_t(s)|e^{-2\xi_0^2t\log p}.
	 \end{aligned}
	\eeq
	Note that
	\[
	      \bgamma_{\cS} - \bgamma_{\Phi(\cS)} = n^{-1/2}(\bP_{\bX_{\Phi(\cS)}}-\bP_{\bX_{\cS}})\bmu^{\ast} = n^{-1/2}(\bP_{\Phi(\cS)|\cS}-\bP_{\cS|\Phi(\cS)})\bmu^{\ast}, 
	\]
	which implies that
	\begin{equation}\label{eq:gamma_linear1}
	    \| \bgamma_{\cS} - \bgamma_{\Phi(\cS)}\|_2 \le n^{-1/2} \bigl(\|\bP_{\Phi(\cS)|\cS}\bmu^{\ast}\|_2+\|\bP_{\cS|\Phi(\cS)}\bmu^{\ast}\|_2\bigr).
	\end{equation}
 	Consequently,
	\beq
	\begin{aligned}\label{eq:gamma_square1}
	      \|\bgamma_\cS\|_2^2 - \|\bgamma_{\Phi(\cS)}\|_2^2& = n^{-1}\bmu^{\ast\top}(\bP_{\bX_{\Phi(\cS)}}-\bP_{\bX_{\cS}})\bmu^{\ast}  = n^{-1} \bmu^{\ast\top}(\bP_{\Phi(\cS)|\cS}-\bP_{\cS|\Phi(\cS)})\bmu^{\ast}  \\
	      & = \frac{(\|\bP_{\Phi(\cS)|\cS}\bmu^{\ast}\|_2+\|\bP_{\cS|\Phi(\cS)}\bmu^{\ast}\|_2)}{n^{1/2}} \frac{(\|\bP_{\Phi(\cS)|\cS}\bmu^{\ast}\|_2-\|\bP_{\cS|\Phi(\cS)}\bmu^{\ast}\|_2)}{n^{1/2}} \\
	      & \ge \|\bgamma_\cS - \bgamma_{\Phi(\cS)} \|_2 \frac{(\|\bP_{\Phi(\cS)|\cS}\bmu^{\ast}\|_2-\|\bP_{\cS|\Phi(\cS)}\bmu^{\ast}\|_2)}{n^{1/2}}.
	\end{aligned}
	\eeq
	Combining \eqref{eq:gamma_square1} with the definition of $\xi_0$ and $\psm_*(s)$ 
	yields that
	\[
	     \frac{1-\eta}{2} (\|\bgamma_\cS\|_2^2 - \|\bgamma_{\Phi(\cS)}\|_2^2) \ge 4\xi_0  \sigma\ltwonorm{\bgamma_{\cS}-\bgamma_{\Phi(\cS)}}\biggl(\frac{t \log p}{n}\biggr) ^ {1 / 2} .
	\]
	Therefore, we deduce from \eqref{ineq:linear_term_bound} that 
	\[
	   \PP\left\{ \exists \cS\in\AA_t(s) \text{ s.t. } \frac{2 |(\bgamma_{\cS} - \bgamma_{\Phi(\cS)}) ^ {\top} \bepsilon|}{n ^ {1 / 2}} \ge  \frac{1-\eta}{2} (\|\bgamma_\cS\|_2^2 - \|\bgamma_{\Phi(\cS)}\|_2^2)   \right\} \le 2|\AA_t(s)|e^{-2\xi_0^2t\log p}.
	\]
	Note that 
	\[
	      |\AA_t(s)| = \binom{p-s ^ *}{t}\binom{s ^ *}{s -t} = \binom{p-s ^ *}{t}\binom{s ^ *}{s^* - s +t}.
	\]
	By Stirling's formula and the fact that $\log p \gtrsim s ^ *$, we have
	\[
	      \log \biggl\{\binom{s ^ *}{s^* - s +t}\biggr\} \lesssim \log \biggl\{\binom{s ^ *}{\lfloor s^* / 2\rfloor}\biggr\} \lesssim s ^ *\lesssim \log p. 
	\]
	Hence, we have
	\[
	\begin{aligned}
	      \PP\biggl\{ \exists \cS\in\AA_t(s) \text{ s.t. } \frac{2 |(\bgamma_{\cS} - \bgamma_{\Phi(\cS)}) ^ {\top} \bepsilon|}{n ^ {1 / 2}} & \ge  \frac{1-\eta}{2} (\|\bgamma_\cS\|_2^2 - \|\bgamma_{\Phi(\cS)}\|_2^2)   \biggr\} \\
	      & \le 2C_1p^{-(2\xi_0^2t - t -1)} \le 2C_1p^{-2t(\xi_0^2 -1)}.
	\end{aligned}
	\]
	A further union bound over $t \in[s]$ yields that
	\begin{equation}\label{eq:thm1_T1_1}
	    \PP\left\{ \exists \cS\in\AA(s) \text{ s.t. } \frac{2 |(\bgamma_{\cS} - \bgamma_{\Phi(\cS)}) ^ {\top} \bepsilon|}{n ^ {1 / 2}} \ge  \frac{1-\eta}{2} (\|\bgamma_\cS\|_2^2 - \|\bgamma_{\Phi(\cS)}\|_2^2)   \right\} \le 2C_1sp^{-2(\xi_0^{2}-1)}.
	\end{equation}
	
	Next we aim to show that (\ref{eq:proof1_target2}) is a high-probability event.
	Fix any $t\in[s]$. For any $\mS\in\AA_t(s)$, let $\mU,\mV$ be the orthogonal complement of $\mW:=\col(\bX_{\mS\cap{\Phi(\cS)}})$ as a subspace of $\mathrm{col}(\bX_\mS)$ and $\col(\bX_{{\Phi(\cS)}})$ respectively. Then dim$(\mU)=$dim$(\mV)=t$. We have
	\begin{align}
	\frac{1}{n}\bepsilon^{\top}(\bP_{\bX_\mS}-\bP_{\bX_{\Phi(\cS)}})\bepsilon & = \frac{1}{n}\bepsilon^{\top}(\bP_\mW+\bP_\mU)\bepsilon-\frac{1}{n}\bepsilon^{\top}(\bP_\mW+\bP_\mV)\bepsilon = \frac{1}{n}\bepsilon^{\top}(\bP_\mU-\bP_\mV)\bepsilon.
	\end{align}
	By Theorem 1.1 in \cite{rudelson2013hanson}, there exists a universal constant $c>0$ such that for any $x>0$,
	\begin{equation}
	\PP\left(|\bepsilon^{\top}\bP_\mU\bepsilon-\EE(\bepsilon^{\top}\bP_\mU\bepsilon)|>\sigma^2 x   \right) \le 2e^{-c\min(x^2/\|\bP_\mU\|_F^2,x/\|\bP_\mU\|_2)} = 2e^{-c\min(x^2/t,x)}.
	\end{equation}
	Similarly,
	\begin{equation}
		\PP\left(|\bepsilon^{\top}\bP_\mV\bepsilon-\EE(\bepsilon^{\top}\bP_\mV\bepsilon)|>\sigma^2 x   \right) \le  2e^{-c\min(x^2/t,x)}.
	\end{equation}
	Note that $\EE(\bepsilon^{\top}\bP_\mV\bepsilon)=\EE\text{tr}(\bP_\mV\bepsilon\bepsilon^{\top}) = \var(\epsilon_1)\text{tr}(\bP_\mV) = t\var(\epsilon_1)=\EE(\bepsilon^{\top}\bP_\mU\bepsilon)$. Combining the above two inequalities yields
	\begin{equation}
		\label{eq:quad}	
		\PP(|\bepsilon^{\top}\bP_\mU\bepsilon-\bepsilon^{\top}\bP_\mV\bepsilon| >2\sigma^2x )\le 4e^{-c\min(x^2/t,x)}.
	\end{equation}
	If we have $\xi_0\ge (16 \log 2)^{-1/2}$, then applying a union bound over $\mS\in\AA_t(s)$ and taking $x =16\xi_0^2 t \log p $ yields that
	\beq
		\begin{aligned}
			\PP\biggl\{{\exists\mS \in \AA_t(s)} & \text{ s.t. }   \frac{|\bepsilon^{\top}\bP_{\bX_\mS}\bepsilon-\bepsilon^{\top}\bP_{\bX_{\Phi(\cS)}}\bepsilon|}{n} >32\xi_0^2\sigma^2 t\left( \frac{\log p}{n}\right) \biggr\} \\
			&\le  4C_1p^{-(16c\xi_0^{2}t-t-1)}  \le 4C_1p^{-2t(8c\xi_0^{2}-1)} .
		\end{aligned}
	\eeq
	Note that
	\[
	\begin{aligned}
	    \frac{1-\eta}{2} (\|\bgamma_\cS\|_2^2-\|\bgamma_{\Phi(\cS)}\|_2^2) & \ge \frac{1-\eta}{2n} (\|\bP_{\Phi(\cS)|\cS}\bmu^*\|_2-\|\bP_{\cS|\Phi(\cS)}\bmu^*\|_2)(\|\bP_{\Phi(\cS)|\cS}\bmu^*\|_2+\|\bP_{\cS|\Phi(\cS)}\bmu^*\|_2) \\
	    & \ge \frac{1-\eta}{2n}(\|\bP_{\Phi(\cS)|\cS}\bmu^*\|_2-\|\bP_{\cS|\Phi(\cS)}\bmu^*\|_2)^2 \ge 32\xi_0^2 \sigma^2 t\left(\frac{\log p}{n}\right).
	\end{aligned}
	\]
A further union bound over $t \in [s]$ yields
	\beq
		\label{eq:thm1_T2_1}
		\begin{aligned}
			\PP\biggl\{\exists\cS\in\AA(s) \text{ s.t. }  \frac{|\bepsilon^{\top}\bP_{\bX_\mS}\bepsilon-\bepsilon^{\top}\bP_{\bX_{\Phi(\cS)}}\bepsilon|}{n}> \frac{1-\eta}{2} (\|\bgamma_\cS\|_2^2-\|\bgamma_{\Phi(\cS)}\|_2^2)\biggr\}  \le 4C_1sp^{-2(8c\xi_0^2 -1)}. 
		\end{aligned}
	\eeq
	
	Finally, let $C=\max\{1,(8c)^{-1/2}, 6C_1\}$. Combining \eqref{eq:loss_gap}, \eqref{eq:thm1_T1_1} and \eqref{eq:thm1_T2_1} yields that 
	\[
		\PP\biggl[\inf_{\cS \in \AA(s)}\biggl\{\frac{1}{n}(\cL_{\cS} - \cL_{\Phi(\cS)}) - \eta(\ltwonorm{\bgamma_{\cS}} ^ 2 - \ltwonorm{\bgamma_{\Phi(\cS)}} ^ 2)\biggr\} \ge 0\biggr] \ge 1 - Csp^{-2(C^{-2}\xi_0^2 -1)}.
	\]
	Note that
	\[
		\eta(\|\bgamma_\cS\|_2^2-\|\bgamma_{\Phi(\cS)}\|_2^2) \ge \frac{\eta t \psm_*^2(s)}{n}\quad \text{ and }\quad \cL_{\cS}-\cL_* \ge \cL_{\cS}-\cL_{\Phi(\cS)}
	\]
	for all $\cS\in\AA(s)$. For any $\xi>C$, the conclusion then follows immediately if $\xi_0 \ge \xi$.

%%%%%%%%%%%%%%%%
%%%%%%%%%%%%%%%%%
%%%%%%%%%%%%%%%%%%
%%%%%%%%%%%%%%%%%%%
%%%%%%%%%%%%%%%%%%%%

\subsection{Proof of Theorem 2.2}

	We wish to show that if (13) in the article is satisfied, then there exists $j\in \cJ_{\delta_0}$ such that $ \cL_{\{j\}} < \hat \cL_{\dagger} := \min_{j ^ * \in \cS ^ *}{\cL_{\{j ^ *\}}} $ with high probability. To see this, for any $j ^ * \in \cS ^ *$ and any $j \in \cJ_{\delta_0}$, we have that
	\beq
		\label{eq:rs_margin}
		\begin{aligned}
				n^{-1}(\cL_{\{j\}}-\cL_{\{j ^ *\}}) & = \|\bgamma_{\{j\}}\|_2^2 - \ltwonorm{\bgamma_{\{j ^ *\}}} ^ 2 + \frac{2(\bgamma_{\{j\}} - \bgamma_{\{j ^ *\}}) ^ {\top} \bepsilon}{n ^ {1 / 2}} - \frac{1}{n}\bepsilon^{\top}(\bP_{\bX_{j}}-\bP_{\bX_{j^{*}}})\bepsilon \\
				& \le \|\bgamma_{\{j\}}\|_2^2 - \ltwonorm{\bgamma_{\{j ^ \dagger\}}} ^ 2 + \frac{2(\bgamma_{\{j\}} - \bgamma_{\{j ^ *\}}) ^ {\top} \bepsilon}{n ^ {1 / 2}} - \frac{1}{n}\bepsilon^{\top}(\bP_{\bX_{j}}-\bP_{\bX_{j^{*}}})\bepsilon. 
		\end{aligned}
	\eeq
	Recall that  $\bar \bu_j := \bX_j / \ltwonorm{\bX_j}, \forall j \in [p]$. 
	For convenience, write 
	\[
		\Delta = \max_{j \in \cJ_{\delta_0}} \{\ltwonorm{\bP_{\bX_{j^{\dagger}}} \bmu ^ *} - \ltwonorm{\bP_{\bX_j} \bmu ^ *}\} = \max_{j \in \cJ_{\delta_0}} \{|\bar \bu_{j^{\dagger}} ^ {\top} \bmu ^ *| - |\bar \bu_{j} ^ {\top} \bmu ^ *|\}. 
	\]
	We then have that 
	\beq
		\label{eq:gamma_gap1}
		\bgamma_{\{j\}} - \bgamma_{\{j ^*\}} = n ^ {- 1 / 2}(\bP_{\bX_{j^{*}}} - \bP_{\bX_{j}})\bmu ^ * = n ^ {-1 / 2} (\bar\bu_{j^{*}}\bar\bu_{j^{*}} ^ {\top} - \bar\bu_{j}\bar\bu^{\top}_{j}) \bmu ^ *, 
	\eeq
	and that 
	\beq
		\label{eq:quad_gamma_gap11}
		\begin{aligned}
			\ltwonorm{\bgamma_{\{j\}}} ^ 2 - \ltwonorm{\bgamma_{\{j ^ \dagger\}}} ^ 2 & = \frac{1}{n} \bmu ^ {*\top}(\bP_{\bX_{j ^ \dagger}} - \bP_{\bX_{j}})\bmu ^ * = \frac{1}{n}\bmu ^ {\ast \top}(\bar\bu_{j^\dagger}\bar\bu_{j^{\dagger}} ^ {\top} - \bar\bu_{j}\bar\bu^{\top}_{j})\bmu ^ * \\
			& = \frac{1}{n}(|\bar \bu_{j^{\dagger}} ^ {\top} \bmu ^ *| - |\bar \bu_{j} ^ {\top} \bmu ^ *|)(|\bar \bu_{j^\dagger} ^ {\top} \bmu ^ *| + |\bar \bu_{j} ^ {\top} \bmu ^ *|) \le \frac{2}{n}\Delta |\bar \bu_{j^{\dagger}} ^ {\top} \bmu ^ *|. 
		\end{aligned}
	\eeq
	Note that Assumption 2.2 implies that $|\bar \bu^{\top}_{j ^ \dagger}{\bmu ^ *}| \ge 4 \Delta$; we thus deduce from \eqref{eq:quad_gamma_gap11} that 
	\beq
		\label{eq:uj0_lower_bound1}
		\begin{aligned}
			(\bar\bu_j ^ {\top}\bmu ^ *) ^ 2 & \ge (\bar\bu_{j^{\dagger}} ^ {\top}\bmu ^ *) ^ 2 - 2\Delta|\bar \bu_{j^{\dagger}} ^ {\top} \bmu ^ *| \ge \frac{(\bar\bu_{j^{\dagger}} ^ {\top}\bmu ^ *) ^ 2}{2}, \forall j \in \cJ_{\delta_0}. 
		\end{aligned}
	\eeq  
	By Lemma 6.1 of \cite{guo2020best}, \eqref{eq:uj0_lower_bound1} and then (13 in the article), we have for any $j, k \in \cJ_{\delta_0}$ and $j \neq k$ that
	\[
		\begin{aligned}
			\ltwonorm{\bgamma_{\{j\}} - \bgamma_{ \{k\}}}  = n ^ {- 1/ 2}\ltwonorm{\bar\bu_j\bar\bu_j ^{\top} \bmu ^ * - \bar\bu_k\bar\bu_k ^{\top} \bmu ^ *} & \ge n ^ {-1 / 2}\min(|\bar \bu_j ^ {\top}\bmu ^ *|, |\bar \bu_k ^ {\top}\bmu ^ *|)\delta_0 \\
			& \ge \delta_0 (2n) ^ {- 1 /2}|\bmu ^ {\ast \top} \bar \bu_{j^{\dagger}}| =: \delta_0'. 
		\end{aligned}
	\]
	Therefore, $\{\bgamma_{\{j\}} - \bgamma_{\{j ^ *\}}\}_{j \in \cJ_{\delta_0}}$ is a $\delta'_0$-packing set of itself. By Sudakov's lower bound \citep[][Theorem 5.30]{Wai2019}, we deduce that
	\beq
		\label{ineq:sudakov_1}
		\E\biggl\{ \min_{j \in \cJ_{\delta_0}} \frac{2\bigl(\bgamma_{\{j\}} - \bgamma_{\{j ^ *\}}\bigr) ^ {\top} \bepsilon}{n ^ {1 / 2}}\biggr\} \le - \delta_0' \sigma \biggl(\frac{c_{\delta_0}\log p}{n}\biggr) ^ {1 / 2}. 
	\eeq
	Furthermore, \eqref{eq:gamma_gap1} and (13) in the article imply that
	\[
		\ltwonorm{\bgamma_{\{j\}} - \bgamma_{\{j^ {*}\}}} \le n ^ {- 1 / 2}(|\bar \bu_{j^*} ^ {\top} \bmu ^ *| + |\bar \bu_j ^ {\top} \bmu ^ *|) \le 2n ^ {-1 / 2}|\bar\bu^{\top}_{j^\dagger}\bmu ^ *|. 
	\] 
	Therefore, by Lemma \ref{lem:var_sup_process}, 
	\beq
		\label{ineq:var_sup_process_1}
		\biggl\|\min_{j \in \cJ_{\delta_0}} \frac{2(\bgamma_{\{j\}} - \bgamma_{\{j ^ *\}}) ^ {\top} \bepsilon}{n ^ {1 / 2}}\biggr\|_{\psi_2} \lesssim \frac{\sigma ^ 2\max_{j \in \cJ_{\delta_0}}\ltwonorm{\bgamma_{\{j\}} - \bgamma_{\{j ^ *\}}} ^ 2}{n} \lesssim \frac{\sigma ^ 2({\bar \bu_{j^{\dagger}} ^ {\top} \bmu ^ *})^ 2}{n ^ 2}. 
	\eeq
	Combining \eqref{ineq:sudakov_1} and \eqref{ineq:var_sup_process_1}, we deduce that there exists a universal constant $C > 0$, such that for any $t > 0$, 
	\[
		\PP \biggl\{\min_{j \in \cJ_{\delta_0}} \frac{2(\bgamma_{\{j \}} - \bgamma_{\{j ^ *\}}) ^ {\top} \bepsilon}{n ^ {1 / 2}} \ge  - \delta_0' \sigma \biggl(\frac{c_{\delta_0}\log p}{n}\biggr) ^ {1 / 2} + \frac{C t\sigma|{\bar \bu_{j^{\dagger}} ^ {\top} \bmu ^ *}|}{n}\biggr\} \le \exp(-t ^ 2). 
	\]
 	Choosing $t = 2 ^ {-3 / 2} C ^ {-1} \delta_0c ^ {1 / 2}_{\delta_0} \log ^ {1 /2} p$, we then reduce the bound above to 
	\beq
		\label{eq:nec_c1_t2_1}
		\PP \biggl\{\min_{j \in \cJ_{\delta_0}} \frac{2(\bgamma_{\{ j \}} - \bgamma_{\{ j ^ *\}}) ^ {\top} \bepsilon}{n ^ {1 / 2}} \ge  - \frac{\delta_0 c^ {1/ 2}_{\delta_0}\sigma |{\bar \bu_{j^{\dagger}} ^ {\top} \bmu ^ *}|}{2 ^ {3 / 2}}\frac{\log ^ {1 / 2}p}{n}\biggr\} \le p ^ {- \delta_0 ^ 2c_{\delta_0} / (8C^2)}. 
	\eeq
	Besides, combining Assumption 2.2 with the fact that $\delta_0 ^ 2 c_{\delta_0} \log p > 1$ yields that
	\[
		|\bar\bu^{\top}_{j ^ \dagger}\bmu ^ *| > 8\sigma /\{\delta_0c^{1 / 2}_{\delta_0}(\log p) ^ {1 / 2}\}. 
	\] 
	Therefore, applying a union bound over $j \in \cJ_{\delta_0}$ to \eqref{eq:quad} with $x = (8\sigma) ^ {-1}\delta_0c^{1 / 2}_{\delta_0}|\bar\bu_{j ^ {\dagger}}^\top\bmu ^ *|\log^{1 / 2} p$, we deduce by Assumption 2.2 that 
	\beq
		\label{eq:nec_c1_t3_1}	
		\begin{aligned}
			\PP\biggl\{\max_{j \in \cJ_{\delta_0}} \frac{|\bepsilon^{\top}\bP_{\bX_j}\bepsilon-\bepsilon^{\top}\bP_{\bX_{j^{*}}}\bepsilon|}{n}> \frac{\delta_0 c^ {1/ 2}_{\delta_0}\sigma |{\bar \bu_{j^{\dagger}} ^ {\top} \bmu ^ *}|\log ^ {1 / 2}p}{4n} \biggr\}  \le 4p ^ {-(\xi c\delta ^ 2_0 / 8 - 1)c_{\delta_0}}. 
		\end{aligned}
	\eeq
	Finally, note that 	
	\beq
		\label{eq:case1_1}
		\begin{aligned}
			\min_{j \in \cJ_{\delta_0}} & n^{-1}(\cL_{\{j\}}-\cL_{\{j^{*}\}}) \\
			& \le \max_{j\in \cJ_{\delta_0} } \bigl(\|\bgamma_{\{j\}}\|_2^2 - \ltwonorm{\bgamma_{\{j ^ \dagger\}}} ^ 2\bigr) + \min_{j \in \cJ_{\delta_0} } \frac{2(\bgamma_{\{j\}} - \bgamma_{\{j^{*}\}}) ^ {\top} \bepsilon}{n ^ {1 / 2}} + \max_{j \in \cJ_{\delta_0}}\frac{1}{n}\bepsilon^{\top}(\bP_{\bX_{j}}-\bP_{\bX_{j^{*}}})\bepsilon. 
		\end{aligned}
	\eeq	
	Combining \eqref{eq:case1_1}, \eqref{eq:nec_c1_t2_1}, \eqref{eq:nec_c1_t3_1} and \eqref{eq:quad_gamma_gap11} yields that when $\Delta < \delta_0 \sigma (c_{\delta_0}\log p) ^ {1 / 2} / 20$,
	\beq
		\PP\biggl(\min_{j \in \cJ_{\delta_0}}  \cL_{\{j\}} < \cL_{\{j^{*}\}} \biggr) \ge 1 - 4p ^ {-(\xi c\delta ^ 2_0 / 8 - 1)c_{\delta_0}} - p ^ {- \delta_0 ^ 2 c_{\delta_0} / (8C^2)}. 
	\eeq
	The conclusion immediately follows once we apply a union bound over $j ^ * \in \cS ^ *$.

%%%%%%%%%%%%%%%%
%%%%%%%%%%%%%%%%%
%%%%%%%%%%%%%%%%%%
%%%%%%%%%%%%%%%%%%%
%%%%%%%%%%%%%%%%%%%%

\subsection{Proof of Theorem 2.3}
	An important observation is that $\Phi(\cS) = \cS ^ \dagger$ and 
	\[
	     \psm(\cS) = \ltwonorm{\bP_{{\cS ^ {\dagger} | \cS}} \bmu ^ *} - \ltwonorm{\bP_{{\cS | \cS ^ {\dagger}}} \bmu ^ *}
	\]
	for any $\cS \in \AA_{j_0}$. This motivates us to take the following two main steps to establish the theorem: (i) we show that $\cL_{\cS ^ {\dagger}}$ is the smallest among $\{\cL_{\cS}\}_{\cS \in \AA ^ *(s )}$ with high probability; (ii) we show that $\min_{\cS \in \AA_{j_0}} \cL_{\cS}< \cL_{\cS ^ \dagger}$, which implies that $\cS ^ \dagger$ is not the best subset any more, and thus that the best subset must have false discoveries. 
	
	{\bf Step (i).} We aim to show that 
    \beq
    \label{eq:s_dagger_optimal}
    \PP\biggl(\min_{\cS \in \AA ^ *(s)} \cL_{\cS} - \cL_{\cS ^ {\dagger}} \le 0\biggr) \le {4sp^{-(c\xi^2/4-2)} + 2sp^ {-(\xi ^ 2/32-2) }}. 
    \eeq
	This step follows closely the proof strategy of Theorem 2.1. 
    For any $\cS\in\AA^*(s)$, 
    \beq
    \begin{aligned}
    	n^{-1}(\cL_\cS-\cL_{\cS^{\dagger}})   = & \|\bgamma_{\mS}\|_2^2 - \ltwonorm{\bgamma_{\mS ^ {\dagger}}} ^ 2 + \frac{2(\bgamma_{\cS} - \bgamma_{\cS ^ {\dagger}}) ^ {\top} \bepsilon}{n ^ {1 / 2}} - \frac{1}{n}\bepsilon^{\top}(\bP_{\bX_{\mS}}-\bP_{\bX_{\mS^{\dagger}}})\bepsilon \\
    	=& \frac{\|\bgamma_{\mS}\|_2^2 - \ltwonorm{\bgamma_{\mS ^ {\dagger}}} ^ 2 }{2}  +  \frac{2(\bgamma_{\cS} - \bgamma_{\cS ^ {\dagger}}) ^ {\top} \bepsilon}{n ^ {1 / 2}} \\
    	& + \frac{\|\bgamma_{\mS}\|_2^2 - \ltwonorm{\bgamma_{\mS ^ {\dagger}}} ^ 2 }{2}   - \frac{1}{n}\bepsilon^{\top}(\bP_{\bX_{\mS}}-\bP_{\bX_{\mS^{\dagger}}})\bepsilon.
    \end{aligned}
    \eeq
    We wish to show that
	\beq
    	\inf_{\cS \in \AA^ *(s )}\biggl\{\frac{\|\bgamma_{\mS}\|_2^2 - \ltwonorm{\bgamma_{\mS ^ {\dagger}}} ^ 2 }{2}  +  \frac{2(\bgamma_{\cS} - \bgamma_{\cS ^ {\dagger}}) ^ {\top} \bepsilon}{n ^ {1 / 2}}\biggr\} >0 \label{eq:theorem2_T1} 
    \eeq
    and
    \beq
    	\inf_{\cS \in \AA ^ *(s )}\biggl\{\frac{\|\bgamma_{\mS}\|_2^2 - \ltwonorm{\bgamma_{\mS ^ {\dagger}}} ^ 2 }{2}   - \frac{1}{n}\bepsilon^{\top}(\bP_{\bX_{\mS}}-\bP_{\bX_{\mS^{\dagger}}})\bepsilon\biggr\} >0 \label{eq:theorem2_T2}
	\eeq
    with high probability in the sequel. Define $\AA_t^*(s) = \{\cS\in\AA^*(s): |\cS\setminus\cS^\dagger| = t\}$. 
 	To prove (\ref{eq:theorem2_T1}), first fix some $\cS\in\AA_t^*(s)$. By Hoeffding's inequality, we have for any $x>0$ that
    \[
    \PP\left\{ |(\bgamma_\cS-\bgamma_{\cS^{\dagger}})^{\top}\bepsilon| > x\sigma\|\bgamma_\cS-\bgamma_{\cS^{\dagger}} \|_2 \right\} \le 2e^{-x^2/2}.
    \]
    A union bound over $\cS\in\AA_t^*(s)$ yields that
    \[
    \begin{aligned}
    	\PP\biggl\{\exists \cS \in \AA_t^*(s) & \text{ s.t. } \frac{2 |(\bgamma_{\cS} - \bgamma_{\cS^{\dagger}}) ^ {\top} \bepsilon|}{n ^ {1 / 2}} \ge \frac{2x\sigma \ltwonorm{\bgamma_{\cS}-\bgamma_{\cS^{\dagger}}}}{n ^ {1 / 2}}  \biggr\} \\
    &   \le  2|\AA_t^*(s)|e ^ {-x ^ 2 / 2} = 2 \binom{s}{t}\binom{s^*-s}{t} \le  2pe ^ {-(x ^ 2 / 2-2t)},
    \end{aligned}
    \]
    Let $x=\xi(t\log p)^{1/2}/4$. Then we have that
    \[
    \begin{aligned}
    	\PP\biggl\{ \exists \cS\in\AA_t^*(s) \text{ s.t. } \frac{2 |(\bgamma_{\cS} - \bgamma_{\cS^{\dagger}}) ^ {\top} \bepsilon|}{n ^ {1 / 2}} \ge \frac{\xi\sigma \|\bgamma_\cS - \bgamma_{\cS^{\dagger}} \|_2 }{2}\left(\frac{t\log p}{n}\right)^{1/2}  \biggr\}  \le 2p^{-t(\xi^2/32-2)}.
    \end{aligned}
    \]
    Note that
    \beq
    \begin{aligned}\label{eq:thm2_gamma_square}
    	\|\bgamma_\cS\|_2^2 - \|\bgamma_{\cS^{\dagger}}\|_2^2& = n^{-1}\bmu^{\ast\top}(\bP_{\bX_{\cS^{\dagger}}}-\bP_{\bX_{\cS}})\bmu^{\ast}  \\
    	& \ge \|\bgamma_\cS - \bgamma_{\cS^{\dagger}} \|_2 \frac{\|\bP_{\cS^{\dagger}|\cS}\bmu^{\ast}\|_2-\|\bP_{\cS|\cS^{\dagger}}\bmu^{\ast}\|_2}{n^{1/2}} \\
    	& \ge \|\bgamma_\cS - \bgamma_{\cS^{\dagger}} \|_2 \xi \sigma\left(\frac{t\log p}{n}\right)^{1/2}.
    \end{aligned} 	
    \eeq
    Further apply a union bound for $t\in[s]$. We thus have that
    \begin{equation}\label{eq:theorem2_T1_result}
    	\PP\biggl\{ \exists \cS\in\AA^*(s) \text{ s.t. } \frac{2 |(\bgamma_{\cS} - \bgamma_{\cS^{\dagger}}) ^ {\top} \bepsilon|}{n ^ {1 / 2}} \ge  \frac{ \|\bgamma_\cS\|_2^2 - \|\bgamma_{\cS^{\dagger}}\|_2^2  }{2} \biggr\} 
    	\le  2sp^{-(\xi^2/32-2)}.
    \end{equation}

    Next we show that (\ref{eq:theorem2_T2}) holds with high probability.
    Similarly to \eqref{eq:quad}, we can obtain that 
    \begin{equation}
    	\label{eq:quad2}
    	\PP(|\bepsilon^{\top}\bP_\mU\bepsilon-\bepsilon^{\top}\bP_\mV\bepsilon| >2\sigma^2x )\le 4e^{-c\min(x^2/t,x)}.
    \end{equation}
    Note that $\log p>1$ and $\xi > 2$. By taking $x =t\xi^2 \log p/4 $, applying a union bound over $\mS\in\AA_t^*(s)$ yields that 
    \beq
    \begin{aligned}
    	\PP\biggl\{{\exists\mS \in \AA_t^*(s)} \text{ s.t. }  \frac{|\bepsilon^{\top}\bP_{\bX_\mS}\bepsilon-\bepsilon^{\top}\bP_{\bX_{\cS^{\dagger}}}\bepsilon|}{n}>\frac{t \xi^2\sigma^2\log p}{2n} \biggr\}
    	\le 4p^{-t(c\xi^{2}/4-2)}  .
    \end{aligned}
    \eeq
    Note that
    \[
	    \begin{aligned}
	    	\frac{\|\bgamma_\cS\|_2^2-\|\bgamma_{\cS^{\dagger}}\|_2^2}{2} & =\frac{(\ltwonorm{\bP_{\cS^{\dagger}|\cS}\bmu ^ *}-\ltwonorm{\bP_{\cS|\cS^{\dagger}}\bmu ^ *})(\ltwonorm{\bP_{\cS^{\dagger}|\cS}\bmu ^ *}+\ltwonorm{\bP_{\cS|\cS^{\dagger}}\bmu ^ *})}{2n}\\ 
	    	&  \ge \frac{(\ltwonorm{\bP_{\cS^{\dagger}|\cS}\bmu ^ *}-\ltwonorm{\bP_{\cS|\cS^{\dagger}}\bmu ^ *})^2}{2n}  \ge \frac{t\xi^2\sigma^2\log p}{2n}.
	    \end{aligned}
    \]
    Then with a union bound over $t\in[s]$, it follows that 
    \beq
    \label{eq:theorem2_T2_result}
    \begin{aligned}
    	\PP\biggl(\exists\cS\in\AA^*(s) \text{ s.t. }  \frac{|\bepsilon^{\top}\bP_{\bX_\mS}\bepsilon-\bepsilon^{\top}\bP_{\bX_{\mS^{\dagger}}}\bepsilon|}{n}> \frac{\|\bgamma_\cS\|_2^2-\|\bgamma_{\cS^{\dagger}}\|_2^2}{2}\biggr)  \le 4sp^{-(c\xi^{2}/4-2)} . 
    \end{aligned}
    \eeq
    Combining (\ref{eq:theorem2_T1_result}) and (\ref{eq:theorem2_T2_result}) yields (\ref{eq:s_dagger_optimal}). \\
    
    {\bf Step (ii).} We wish to show that if (14) in the article  is satisfied, then $\min_{\cS\in \AA_{j_0}}\cL_{\cS} < \cL_{\cS ^ {\dagger}}$. Note that for any $\cS \in \AA_{j_0}$, 
	\beq
		\begin{aligned}
				n^{-1}(\cL_\mS-\cL_{\mS^{\dagger}}) & = \|\bgamma_{\mS}\|_2^2 - \ltwonorm{\bgamma_{\mS ^ {\dagger}}} ^ 2 + \frac{2(\bgamma_{\cS} - \bgamma_{\cS ^ {\dagger}}) ^ {\top} \bepsilon}{n ^ {1 / 2}} - \frac{1}{n}\bepsilon^{\top}(\bP_{\bX_{\mS}}-\bP_{\bX_{\mS^{\dagger}}})\bepsilon. 
		\end{aligned}
	\eeq
	Denote the only element of $\cS \backslash \cS ^{\dagger}_0$ by $j$. We then have that 
	\beq
		\label{eq:gamma_gap2}
		\bgamma_{\cS} - \bgamma_{\cS ^ {\dagger}} = n ^ {- 1 / 2}(\bP_{\bX_{\cS ^ \dagger}} - \bP_{\bX_{\cS}})\bmu ^ * = n ^ {-1 / 2} (\bar\bu_{j_0}\bar\bu_{j_0} ^ {\top} - \bar\bu_{j}\bar\bu^{\top}_{j}) \bmu ^ *, 
	\eeq
	and that 
	\beq
		\label{eq:quad_gamma_gap2}
		\begin{aligned}
			\ltwonorm{\bgamma_{\cS}} ^ 2 - \ltwonorm{\bgamma_{\cS ^ {\dagger}}} ^ 2 & = \frac{1}{n} \bmu ^ {*\top}(\bP_{\bX_{\cS ^ {\dagger}}} - \bP_{\bX_{\cS}})\bmu ^ * = \frac{1}{n}\bmu ^ {\ast \top}(\bar\bu_{j_0}\bar\bu_{j_0} ^ {\top} - \bar\bu_{j}\bar\bu^{\top}_{j})\bmu ^ * \\
			& = \frac{1}{n}(|\bar \bu_{j_0} ^ {\top} \bmu ^ *| - |\bar \bu_{j} ^ {\top} \bmu ^ *|)(|\bar \bu_{j_0} ^ {\top} \bmu ^ *| + |\bar \bu_{j} ^ {\top} \bmu ^ *|) \le \frac{2}{n}\Delta |\bar \bu_{j_0} ^ {\top} \bmu ^ *|, 
		\end{aligned}
	\eeq
	where $\Delta := \max_{\cS \in \AA_{j_0}} \{\ltwonorm{\bP_{{\cS ^ {\dagger} | \cS}} \bmu ^ *} - \ltwonorm{\bP_{{\cS | \cS ^ {\dagger}}} \bmu ^ *}\} = \max_{k \in \cJ_{\delta_0}} \{|\bar \bu_{j_0} ^ {\top} \bmu ^ *| - |\bar \bu_k ^ {\top} \bmu ^ *|\}$.	
	Assumption \ref{ass:residual_margin_s} implies that $|{\bar \bu_{j_0} ^ {\top} \bmu ^ *}| >4\Delta$. 
	Therefore, 
	we have that 
	\beq
		\label{eq:uj0_lower_bound}
		\begin{aligned}
			(\bar\bu_j ^ {\top}\bmu ^ *) ^ 2 & > (\bar\bu_{j_0} ^ {\top}\bmu ^ *) ^ 2 - 2\Delta|\bar \bu_{j_0} ^ {\top} \bmu ^ *| > \frac{(\bar\bu_{j_0} ^ {\top}\bmu ^ *) ^ 2}{2}, \forall j \in \cJ_{\delta_0}. 
		\end{aligned}
	\eeq  
	By Lemma 6.1 of \cite{guo2020best} and then \eqref{eq:uj0_lower_bound}, we have for any $j, k \in \cJ_{\delta_0}$ and $j \neq k$ that
	\[
		\begin{aligned}
			\ltwonorm{\bgamma_{\cS ^ {\dagger}_0 \cup \{j\}} - \bgamma_{\cS ^ {\dagger}_0 \cup \{k\}}}  = n ^ {- 1/ 2}\ltwonorm{\bar\bu_j\bar\bu_j ^\top \bmu ^ * - \bar\bu_k\bar\bu_k ^\top \bmu ^ *} & \ge n ^ {-1 / 2}\min(|\bar \bu_j ^ {\top}\bmu ^ *|, |\bar \bu_k ^ {\top}\bmu ^ *|)\delta_0 \\
			& \ge \delta_0 (2n) ^ {- 1 /2}|\bmu ^ {\ast \top} \bar \bu_{j_0}| =: \delta_0'. 
		\end{aligned}
	\]
	Therefore, $\{\bgamma_{\cS} - \bgamma_{\cS ^ {\dagger}}\}_{\cS \in \AA_{j_0}}$ is a $\delta'_0$-packing set of itself. By Sudakov's lower bound \citep[][Theorem 5.30]{Wai2019}, we deduce that
	\beq
		\label{ineq:sudakov_2}
		\E\biggl\{ \min_{\cS \in \AA_{j_0}} \frac{2(\bgamma_{\cS} - \bgamma_{\cS ^ {\dagger}}) ^ {\top} \bepsilon}{n ^ {1 / 2}}\biggr\} \le - \delta_0' \sigma \biggl(\frac{c_{\delta_0}\log p}{n}\biggr) ^ {1 / 2}. 
	\eeq
	Furthermore, \eqref{eq:gamma_gap2} and (14) in the article imply that
	\[
		\ltwonorm{\bgamma_{\cS} - \bgamma_{\cS ^ {\dagger}}} \le n ^ {- 1 / 2}(|\bar \bu_{j_0} ^ {\top} \bmu ^ *| + |\bar \bu_j ^ {\top} \bmu ^ *|) \le 2n ^ {-1 / 2}|\bar\bu^{\top}_{j_0}\bmu ^ *|. 
	\] 
	Therefore, applying Lemma \ref{lem:var_sup_process} yields that 
	\beq
		\label{ineq:var_sup_process_2}
		\biggl\|\min_{\cS \in \AA_{j_0}} \frac{2(\bgamma_{\cS} - \bgamma_{\cS ^ {\dagger}}) ^ {\top} \bepsilon}{n ^ {1 / 2}}\biggr\|_{\psi_2} \lesssim \frac{\sigma ^ 2\max_{\cS \in \AA_{j_0}}\ltwonorm{\bgamma_{\cS} - \bgamma_{\cS ^{\dagger}}} ^ 2}{n} \lesssim \frac{\sigma ^ 2({\bar \bu_{j_0} ^ {\top} \bmu ^ *})^ 2}{n ^ 2}. 
	\eeq
	Combining \eqref{ineq:sudakov_2} and \eqref{ineq:var_sup_process_2}, we deduce that there exists a universal constant $C > 0$, such that for any $t > 0$, 
	\[	
		\PP \biggl\{\min_{\cS \in \AA_{j_0}} \frac{2(\bgamma_{\cS} - \bgamma_{\cS ^ {\dagger}}) ^ {\top} \bepsilon}{n ^ {1 / 2}} \ge  - \delta_0' \sigma \biggl(\frac{c_{\delta_0}\log p}{n}\biggr) ^ {1 / 2} + \frac{C t\sigma|{\bar \bu_{j_0} ^ {\top} \bmu ^ *}|}{n}\biggr\} \le \exp(-t ^ 2). 
	\]
	Choosing $t = 2 ^ {-3 / 2} C ^ {-1} \delta_0c ^ {1 / 2}_{\delta_0} \log ^ {1 /2} p$, we then reduce the bound above to 
	\beq
		\label{ineq:epsilon_linear_gap2}
		\PP \biggl\{\min_{\cS \in \AA_{j_0}} \frac{2(\bgamma_{\cS} - \bgamma_{\cS ^ \dagger}) ^ {\top} \bepsilon}{n ^ {1 / 2}} \ge  - \frac{\delta_0 c^ {1/ 2}_{\delta_0}\sigma |{\bar \bu_{j_0} ^ {\top} \bmu ^ *}|}{2 ^ {3 / 2}}\frac{\log ^ {1 / 2}p}{n}\biggr\} \le p ^ {- \delta_0 ^ 2c_{\delta_0} / (8C^2)}. 
	\eeq
	Besides, Assumption 2.4 yields that $|\bar\bu^{\top}_{j_0}\bmu ^ *| > 8\sigma /\{\delta_0c^{1 / 2}_{\delta_0}(\log p) ^ {1 / 2}\}$. Therefore, applying a union bound over $\cS \in \AA_{j_0}$ to \eqref{eq:quad} with $x = (8\sigma) ^ {-1}\delta_0c^{1 / 2}_{\delta_0}|\bar\bu_{j_0}^\top\bmu ^ *|\log^{1 / 2} p$, we obtain that
	\beq
	\label{ineq:epsilon_quad_gap2}
	\begin{aligned}
		\PP\biggl\{\max_{\cS \in \AA_{j_0}} \frac{|\bepsilon^{\top}\bP_{\bX_{\cS}}\bepsilon-\bepsilon^{\top}\bP_{\bX_{\cS ^ {\dagger}}}\bepsilon|}{n}> \frac{\delta_0 c^ {1/ 2}_{\delta_0}\sigma |{\bar \bu_{j_0} ^ {\top} \bmu ^ *}|\log ^ {1 / 2}p}{4n} \biggr\}  \le 4p ^ {-(\xi c\delta ^ 2_0 / 8 - 1)c_{\delta_0}}. 
	\end{aligned}
	\eeq
Finally, note that 	
\beq
	\begin{aligned}
		\min_{\cS \in \AA_{j_0}} & n^{-1}(\cL_{\cS}-\cL_{\cS ^ \dagger}) \\
		& \le \max_{\cS \in \AA_{j_0}} \bigl(\|\bgamma_{\cS}\|_2^2 - \ltwonorm{\bgamma_{\cS ^ \dagger}} ^ 2\bigr) + \min_{\cS \in \AA_{j_0}} \frac{2(\bgamma_{\cS} - \bgamma_{\cS ^ \dagger}) ^ {\top} \bepsilon}{n ^ {1 / 2}} + \max_{\cS \in \AA_{j_0}}\frac{1}{n}\bepsilon^{\top}(\bP_{\bX_{\cS}}-\bP_{\bX_{\cS ^ {\dagger}}})\bepsilon. 
	\end{aligned}
\eeq	
When $\Delta < \delta_0 \sigma (c_{\delta_0}\log p) ^ {1 / 2} / 20$, we reach the conclusion once we combine the bound above with \eqref{ineq:epsilon_linear_gap2}, \eqref{ineq:epsilon_quad_gap2} and \eqref{eq:quad_gamma_gap2}. 
\beq
\PP\biggl(\min_{\cS \in \AA_{j_0}}  \cL_{\cS} < \cL_{\cS ^ {\dagger}} \biggr) \ge 1 -4p ^ {-(\xi c\delta ^ 2_0 / 8 - 1)c_{\delta_0}} - p ^ {- \delta_0 ^ 2 c_{\delta_0} / (8C^2)}. 
\eeq

%%%%%%%%%%%%%%%%%%%%
%%%%%%%%%%%%%%%%%%%%
%%%%%%%%%%%%%%%%%%%%
%%%%%%%%%%%%%%%%%%%%
%%%%%%%%%%%%%%%%%%%%

\subsection{Proof of Corollary \ref{cor1}}
The proof of Corollary \ref{cor1} is analogous to that of Theorem 2.1 by simply replacing $\AA(s)$ with $\AA_q(s)$. We omit the details for less redundancy.

%%%%%%%%%%%%%%%%%%%%
%%%%%%%%%%%%%%%%%%%%
%%%%%%%%%%%%%%%%%%%%
%%%%%%%%%%%%%%%%%%%%
%%%%%%%%%%%%%%%%%%%%

\subsection{Lemma \ref{lem:dist_project} and its proof}

To start with, for any two size-$s$ sets $\cS \in \AA(s)$ and $\cS ^ {\ddagger} \in \AA ^ *(s)$ such that $\cS \cap \cS ^ * \subset \cS ^ {\ddagger}$, we analyze two crucial components of the projection signal margin,  ${\bmu^*}^{\top}\bP_{\cS^{\ddagger}|\cS}\bmu^*$ and ${\bmu^*}^{\top}\bP_{\cS|\cS^{\ddagger}}\bmu^*$, under Gaussian design.  
Suppose $\{\bx_i\}_{i\in[n]}$ are $n$ independent and identically distributed observations of $\bx\sim\cN(\textbf{0},\bSigma)$. 
For notational simplicity, let $\cS_1 = \cS \cap\cS^{\ddagger}, \cS_2 = \cS^{\ddagger} \setminus \cS_1 \text{ and } \cS_3 = \cS\setminus \cS_1$.	Figure \ref{fig:ms_set} shows the relationship between the sets.
\begin{figure}[h]        
      \centering
      \includegraphics[width=7cm]{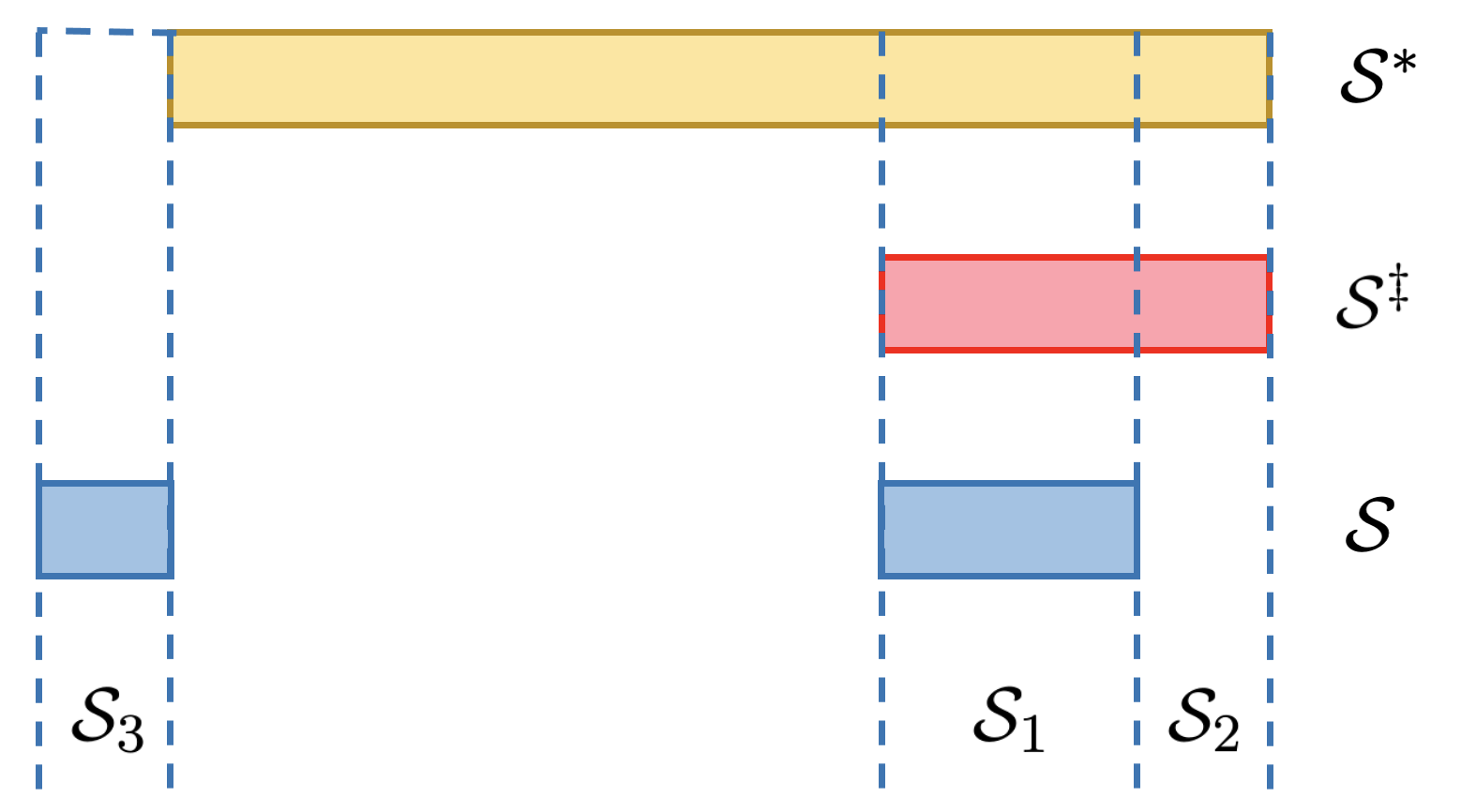} 
      \caption{The relationship between $\cS_1$, $\cS_2$ and $\cS_3$.}
      \label{fig:ms_set}      		      
\end{figure}

Next we introduce some quantities that are involved in the concentration bounds we establish. Define 
	\[
		\bh := \cov(\bx_{\cS_2}, \mu ^ *| \bx_{\cS_1}) = (\bSigma_{\cS_2\cS ^ *} - \bSigma_{\cS_2\cS_1}\bSigma^{-1}_{\cS_1\cS_1}\bSigma_{\cS_1\cS ^ *})\bbeta ^ * 
	\]
	and
	\[
		\bH := \cov(\bx_{\cS_2}, \bx_{\cS_2}| \bx_{\cS_1}) = \bSigma_{\cS_2\cS_2} - \bSigma_{\cS_2\cS_1}\bSigma^{-1}_{\cS_1\cS_1}\bSigma_{\cS_1\cS_2}. 
	\]
	Then write 
	\[
		\nu_1 := \bh^ \top \bH ^ {-1}\bh ~~\text{and}~~ \nu_2 := \var(\mu ^ *|\bx_{\cS_1}) - \nu_1 = \bbeta ^ {*\top} (\bSigma_{\cS ^ *\cS ^ *} - \bSigma_{\cS ^ *\cS_1} \bSigma_{\cS_1\cS_1} ^ {-1}\bSigma_{\cS_1 \cS ^ *})\bbeta ^ * - \nu_1.
	\]
	Similarly, define 
	\[
		\bh' := \cov(\bx_{\cS_3}, \mu ^ *| \bx_{\cS_1}) = (\bSigma_{\cS_3\cS ^ *} - \bSigma_{\cS_3\cS_1}\bSigma^{-1}_{\cS_1\cS_1}\bSigma_{\cS_1\cS ^ *})\bbeta ^ * 
	\]
	and
	\[
		\bH':= \cov(\bx_{\cS_3}, \bx_{\cS_3}| \bx_{\cS_1}) = \bSigma_{\cS_3\cS_3} - \bSigma_{\cS_3\cS_1}\bSigma^{-1}_{\cS_1\cS_1}\bSigma_{\cS_1\cS_3}. 
	\]
   Then write 
	\[
		\nu'_1 := \bh'^ \top {\bH'} ^ {-1}\bh' ~~\text{and}~~ \nu'_2 := \bbeta ^ {*\top} (\bSigma_{\cS ^ *\cS ^ *} - \bSigma_{\cS ^ *\cS_1} \bSigma_{\cS_1\cS_1} ^ {-1}\bSigma_{\cS_1 \cS ^ *})\bbeta ^ * - \nu'_1. 
	\] 
	Now we are in position to present the concentration bounds for ${\bmu^*}^{\top}\bP_{\cS^{\ddagger}|\cS}\bmu^*$ and ${\bmu^*}^{\top}\bP_{\cS|\cS^{\ddagger}}\bmu^*$. 

\begin{lemma}
     \label{lem:dist_project}
     For any two size-$s$ sets $\cS \in \AA(s)$ and $\cS ^ {\ddagger} \in \AA ^ *(s)$ such that $\cS \cap \cS ^ * \subset \cS ^ {\ddagger}$ and any $\xi  > 0$,  we have 
	\[
		\begin{aligned}
			\PP\biggl[\Bigl|{\bmu ^ *} ^ {\top}&  \bP_{\cS^{\ddagger} | \cS }\bmu ^ * - \{(n - s + t)\nu_1 + t \nu_2\}\Bigr|>  \\
			& \xi\Bigl\{3\nu_1(n - s + t)^{1 / 2} + 6(\nu_1\nu_2) ^ {1 / 2}(n - s + t) ^ {1 / 2}  + 3\nu_2t ^ {1 /2 }\Bigr\}  \biggr]   \le 6e ^ {-c\min(\xi ^ 2, \xi)} ~\text{ and }\\
			\PP\biggl[\Bigl|{\bmu ^ *} ^ {\top}&  \bP_{\cS | \cS^{\ddagger} }\bmu ^ * - \{(n - s + t)\nu'_1 + t \nu'_2\}\Bigr|>  \\
			& \xi\Bigl\{3\nu'_1(n - s + t)^{1 / 2} + 6(\nu'_1\nu'_2) ^ {1 / 2}(n - s + t) ^ {1 / 2}  + 3\nu'_2t ^ {1 /2 }\Bigr\}  \biggr] \le 6e ^ {-c\min(\xi ^ 2, \xi)},
		\end{aligned}
	\]
	where $t=|\cS^{\ddagger}\setminus\cS|$, and where $c$ is a universal constant.
\end{lemma}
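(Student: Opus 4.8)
The plan is to prove the first inequality and obtain the second by symmetry (detailed at the end). Set $\bQ:=\bI-\bP_{\bX_{\cS_1}}$. Because $\cS_1=\cS\cap\cS^{\ddagger}\subset\cS^{\ddagger}$ and $\cS_2=\cS^{\ddagger}\setminus\cS_1$, the marginal projection collapses to $\bP_{\cS^{\ddagger}|\cS}=\bP_{\bX_{\cS^{\ddagger}}}-\bP_{\bX_{\cS_1}}=\bP_{\bQ\bX_{\cS_2}}$, the projection onto the residualized block $\wt\bX_{\cS_2}:=\bQ\bX_{\cS_2}$. Conditioning on $\bX_{\cS_1}$, Gaussianity of the i.i.d.\ rows supplies the regression decompositions $\bX_{\cS_2}=\bX_{\cS_1}\bSigma_{\cS_1\cS_1}^{-1}\bSigma_{\cS_1\cS_2}+\bfm{E}$ and $\bmu^*=\bX_{\cS_1}\bSigma_{\cS_1\cS_1}^{-1}\bSigma_{\cS_1\cS^*}\bbeta^*+\bg$, where the rows of $\bfm{E}$ are i.i.d.\ $\cN(\bzero,\bH)$, the entries of $\bg$ are i.i.d.\ $\cN(0,\nu_1+\nu_2)$, and $(\bfm{E},\bg)$ is independent of $\bX_{\cS_1}$. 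The two leading terms lie in $\col(\bX_{\cS_1})$, so $\bQ$ annihilates them; hence $\wt\bX_{\cS_2}=\bQ\bfm{E}$, $\bQ\bmu^*=\bQ\bg$, and ${\bmu^*}^{\top}\bP_{\cS^{\ddagger}|\cS}\bmu^*=(\bQ\bg)^{\top}\bP_{\wt\bX_{\cS_2}}(\bQ\bg)$.

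Next I would separate the signal residual along the design residual. The conditional regression of $\mu^*$ on $\bx_{\cS_2}$ given $\bx_{\cS_1}$ has slope $\bH^{-1}\bh$ and residual variance $\nu_2$, so $\bg=\bfm{E}\bH^{-1}\bh+\bw$ with $\bw$ having i.i.d.\ $\cN(0,\nu_2)$ entries and $\bw\perp\bfm{E}$. Using $\bP_{\wt\bX_{\cS_2}}\bQ=\bP_{\wt\bX_{\cS_2}}$ (as $\col(\wt\bX_{\cS_2})\perp\col(\bX_{\cS_1})$) and $\wt\bX_{\cS_2}\bH^{-1}\bh\in\col(\wt\bX_{\cS_2})$, I expand $\bP_{\wt\bX_{\cS_2}}(\bQ\bg)=\wt\bX_{\cS_2}\bH^{-1}\bh+\bP_{\wt\bX_{\cS_2}}\bw$ to get
\[
{\bmu^*}^{\top}\bP_{\cS^{\ddagger}|\cS}\bmu^*=\underbrace{\|\wt\bX_{\cS_2}\bH^{-1}\bh\|_2^2}_{T_1}+\underbrace{2(\wt\bX_{\cS_2}\bH^{-1}\bh)^{\top}\bw}_{T_2}+\underbrace{\bw^{\top}\bP_{\wt\bX_{\cS_2}}\bw}_{T_3}.
\]
Writing $\bQ=\bU\bU^{\top}$ with $\bU\in\RR^{n\times r}$, $r:=n-s+t$, having orthonormal columns, the vectors $\bv:=\bU^{\top}\bfm{E}\bH^{-1}\bh$ and $\bw':=\bU^{\top}\bw$ satisfy $\bv\sim\cN(\bzero,\nu_1\bI_r)$, $\bw'\sim\cN(\bzero,\nu_2\bI_r)$, and $\bv\perp\bw'$, with all laws free of $\bX_{\cS_1}$, so the conditioning drops out. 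This identifies $T_1=\|\bv\|_2^2=\nu_1\chi_r^2$, $T_2=2\bv^{\top}\bw'$ a bilinear Gaussian chaos, and $T_3=\nu_2\chi_t^2$ (a rank-$t$ Gaussian quadratic form), with $\E T_1+\E T_3=r\nu_1+t\nu_2=(n-s+t)\nu_1+t\nu_2$, the claimed center.

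Then I would concentrate the three pieces. For $T_1$, the Hanson--Wright bound (Theorem 1.1 of \cite{rudelson2013hanson}) gives $\PP(|T_1-r\nu_1|>\nu_1\sqrt{r}\,\xi)\le 2e^{-c\min(\xi^2,\sqrt{r}\xi)}\le 2e^{-c\min(\xi^2,\xi)}$, and the same argument at scale $\nu_2\sqrt{t}$ controls $T_3$. For $T_2=2\sqrt{\nu_1\nu_2}\sum_{j=1}^r z_jz_j'$ with $z_j,z_j'$ independent standard normals, each product $z_jz_j'$ is sub-exponential with $\psi_1$-norm $O(1)$, so Bernstein's inequality yields $\PP(|T_2|>(\nu_1\nu_2)^{1/2}\sqrt{r}\,\xi)\le 2e^{-c\min(\xi^2,\xi)}$. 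A union bound over these six one-sided events, after enlarging constants to the stated coefficients $3\nu_1(n-s+t)^{1/2}$, $6(\nu_1\nu_2)^{1/2}(n-s+t)^{1/2}$ and $3\nu_2 t^{1/2}$, produces the deviation bound with probability at least $1-6e^{-c\min(\xi^2,\xi)}$. The rank facts $\rank(\bX_{\cS_1})=s-t$ and $\rank(\wt\bX_{\cS_2})=t$, valid almost surely for continuous design under $s\le s^*<n$, are what make $T_1$ and $T_3$ exactly $\chi^2$ with $r$ and $t$ degrees of freedom.

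The second bound then follows verbatim with $\cS_3=\cS\setminus\cS_1$ replacing $\cS_2$, since $\bP_{\cS|\cS^{\ddagger}}=\bP_{\bX_\cS}-\bP_{\bX_{\cS_1}}=\bP_{\bQ\bX_{\cS_3}}$ and $(\bh',\bH',\nu_1',\nu_2')$ play exactly the roles of $(\bh,\bH,\nu_1,\nu_2)$. The main delicacy I anticipate is the bookkeeping across the second and third steps: verifying that, although $\bQ$ and $\bU$ depend on $\bX_{\cS_1}$, the independence $(\bfm{E},\bw)\perp\bX_{\cS_1}$ renders the laws of $\bv,\bw',T_1,T_2,T_3$ free of $\bX_{\cS_1}$ so the conditioning cleanly disappears; and aligning the three distinct fluctuation scales $\nu_1\sqrt{r}$, $(\nu_1\nu_2)^{1/2}\sqrt{r}$, $\nu_2\sqrt{t}$ with the single tail $e^{-c\min(\xi^2,\xi)}$ uniformly in $\xi$, which is precisely where the sub-exponential (rather than sub-Gaussian) nature of the $\chi^2$ and chaos tails is essential.
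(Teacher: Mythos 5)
Your proposal is correct and follows essentially the same route as the paper: reduce $\bP_{\cS^{\ddagger}|\cS}$ to the projection onto the $\bX_{\cS_1}$-residualized columns of $\bX_{\cS_2}$, condition on $\bX_{\cS_1}$ so that Gaussianity makes the residualized design and signal i.i.d.\ and independent of the conditioning, split the quadratic form into the predictor-norm, cross, and projected-residual pieces with means $(n-s+t)\nu_1$, $0$, $t\nu_2$, and concentrate each with sub-exponential (Bernstein/Hanson--Wright) tails. The only difference is organizational: the paper packages your $T_1+T_2+T_3$ decomposition and its concentration into the standalone Lemma \ref{lem:projected_gaussian}, applied after rotating by the eigenbasis $\bV$ of $\bI-\bP_{\bX_{\cS_1}}$, whereas you carry out the identical computation inline.
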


\begin{proof}[Proof of Lemma \ref{lem:dist_project}]

Recall that $\AA_t(s):=\{\cS\in\AA(s):|\cS\setminus\cS^{*}|=t \}$ for $ t \in [s]$. For any $\cS\in\AA_t(s)$, we start with analyzing $\|\bP_{\cS^{\ddagger}|\cS}\bmu^*\|^2_2$. 	Note that $|\cS_2|=t$ and $|\cS_1|=s-t$.
	For each $j \notin \cS_1$, regressing $\bX_j$ on $\bX_{\cS_1}$ yields that
	\[
		\bX_{j} = \bX_{\cS_1} \bSigma_{\cS_1 \cS_1} ^ {-1} \bSigma_{\cS_1 j} + (\bX_j - \bX_{\cS_1} ^ {\top} \bSigma_{\cS_1 \cS_1} ^ {-1} \bSigma_{\cS_1 j}) =: \bX_{\cS_1}\btheta_j + \bGamma_j. 
	\]	
	Consider the conditional distribution of $\|\bP_{\cS^{\ddagger}|\cS}\bmu ^ *\|_2^2$ given $\bX_{\cS_1}$. 
	Let 
	\[
	\bI - \bP_{\bX_{\cS_1}} = \bV\bV ^ {\top} = \sum_{j = 1} ^ {n - (s - t)} \bv_j\bv_j ^ {\top}
	\]
	 be an eigen-decomposition of $\bI - \bP_{\bX_{\cS_1}}$.
	Note that $\bV$ is independent of $\bGamma_{\cS_1 ^ c}$, because $\bX_{\cS_1} $ is independent of $\bGamma_{\cS_1 ^ c}$. Then we have 
	\[
	    \begin{aligned}
			{\bmu ^ *} ^ {\top} \bP_{\cS^{\ddagger}| \cS}\bmu ^ * & =  {\bmu ^ *} ^ {\top}\bV\bV ^ {\top}\bX_{\cS_2}(\bX_{\cS_2} ^ {\top}\bV\bV ^ {\top}\bX_{\cS_2}) ^ {-1} \bX_{\cS_2} ^ {\top} \bV \bV ^{\top} \bmu ^ * \\
			& = {\bmu ^ *} ^ {\top} \bV\bV ^ {\top}\bGamma_{\cS_2}(\bGamma_{\cS_2} ^ {\top}\bV\bV ^ {\top}\bGamma_{\cS_2}) ^ {-1}\bGamma_{\cS_2} ^ {\top}\bV \bV ^{\top} \bmu ^ * \\
			& = {\tilde\bmu} ^ {* \top} \tilde \bGamma_{\cS_2}(\tilde \bGamma_{\cS_2} ^ {\top} \tilde \bGamma_{\cS_2}) ^ {-1} \tilde \bGamma_{\cS_2} ^ {\top} \tilde \bmu ^ *, 
		\end{aligned}
	\]
	where $\tilde\bGamma_{\cS_1 ^ c} := \bV ^ {\top} \bGamma_{\cS_1 ^ c}$ and $\tilde \bmu ^ * = \bV ^{\top} \bmu ^ *$. Besides, conditional on $\bX_{\cS_1}$, $\tilde\bGamma_{\cS_1 ^ c}$ and $\tilde \bmu ^ *$ have $(n - s + t)$ independent rows because of Gaussianity of $\bGamma_{\cS_1 ^ c}$ and $\bmu ^ *$ and orthogonality of $\bV$. Applying Lemma \ref{lem:projected_gaussian},  we obtain that for any $\xi  > 0$, 
	\beq
	\label{equ:applied_b1}
		\begin{aligned}
			\PP\biggl[\Bigl|{\bmu ^ *} ^ {\top} & \bP_{\cS ^ {\ddagger} | \cS }\bmu ^ * - \{(n - s + t)\nu_1 + t \nu_2\}\Bigr| \\
			& > \xi\Bigl\{3\nu_1(n - s + t)^{1 / 2} + 6(\nu_1\nu_2) ^ {1 / 2}(n - s + t) ^ {1 / 2}  + 3\nu_2t ^ {1 /2 }\Bigr\} ~\bigg|~ \bX_{\cS_1}\biggr] \le 6e ^ {-c\min(\xi ^ 2, \xi)}.  
		\end{aligned}
	\eeq
	Taking expectation with respect to $\bX_{\cS_1}$ on both sides of (\ref{equ:applied_b1}), we deduce that 
	\beq
		\label{eq:psm_bound}
			\begin{aligned}
			\PP\biggl[\Bigl|{\bmu ^ *} ^ {\top} & \bP_{\cS ^ {\ddagger} | \cS }\bmu ^ * - \{(n - s + t)\nu_1 + t \nu_2\}\Bigr| \\
			& > \xi\Bigl\{3\nu_1(n - s + t)^{1 / 2} + 6(\nu_1\nu_2) ^ {1 / 2}(n - s + t) ^ {1 / 2}  + 3\nu_2t ^ {1 /2 }\Bigr\} \biggr] \le 6e ^ {-c\min(\xi ^ 2, \xi)}.  
		\end{aligned}
	\eeq	
	For $\|\bP_{\cS|\cS^{\ddagger}}\bmu^*\|^2_2$, we can reach the conclusion by simply replacing $\cS_2$ with $\cS_3$.
	
\end{proof}

%%%%%%%%%%%%%%%%%%%%
%%%%%%%%%%%%%%%%%%%%
%%%%%%%%%%%%%%%%%%%%
%%%%%%%%%%%%%%%%%%%%
%%%%%%%%%%%%%%%%%%%%

\subsection{Proof of Theorem 2.4}
For any $\cS\in\AA_t(s)$, we first consider any $\cS^\ddagger\in\AA^*(s)$ such that $\cS\cap\cS^* \subset \cS^\ddagger$.
Note that $\bSigma = \bI_p$, simple algebra yields that $\nu_1 = t\beta^2$, $\nu_2=(s^*-s)\beta^{2}$, $\nu_1{'} = 0$ and $\nu_2{'} = (s^*-s+t)\beta^2$. Applying Lemma \ref{lem:dist_project} yields that for any $\xi>0$, we have
\[
		\begin{aligned}
			\PP\biggl[\Bigl|{\bmu ^ *} ^ {\top}&  \bP_{\cS^{\ddagger} | \cS }\bmu ^ * - \{(n - s + t)t\beta^2 + t (s^*-s)\beta^2\}\Bigr|>  \\
			& \xi\Bigl\{3\nu_1(n - s + t)^{1 / 2} + 6(\nu_1\nu_2) ^ {1 / 2}(n - s + t) ^ {1 / 2}  + 3\nu_2t ^ {1 /2 }\Bigr\}  \biggr]   \le 6e ^ {-c\min(\xi ^ 2, \xi)} ~\text{ and }\\
			\PP\biggl[\Bigl|{\bmu ^ *} ^ {\top}&  \bP_{\cS | \cS^{\ddagger} }\bmu ^ * - t (s^*-s+t)\beta^2\}\Bigr|>  \xi\Bigl\{3(s^*-s+t)t ^ {1 /2 }\Bigr\}  \biggr] \le 6e ^ {-c\min(\xi ^ 2, \xi)},
		\end{aligned}
	\]
	where $c$ is a universal constant.

       For simplicity, let $\Delta:= 3\nu_1(n - s + t)^{1 / 2} + 6(\nu_1\nu_2) ^ {1 / 2}(n - s + t) ^ {1 / 2}  + 3\nu_2t ^ {1 /2 }$ and $\Delta' := 3(s^*-s+t)t ^ {1 /2 }$. Whenever $n > s ^ *$, there exists a universal constant $C_0 > 0$ such that $\max\{\Delta , \Delta'\}  \le C_0(ns ^ * t) ^ {1 / 2}\beta ^ 2 =: M\beta ^ 2$. 
       Writing $A = \{t(n+s^*-2s+t) - \xi M\} ^{1/2}|\beta|$ and $B = \{t(s^*-s+t) + \xi M\}^{1/2}|\beta|$, we deduce from the previous two bounds that
        \[
                 \PP\left(  \frac{ \|\bP_{\cS^\ddagger|\cS}\bmu^*\|_2 -   \|\bP_{\cS|\cS^\ddagger}\bmu^*\|_2}{t^{1/2}} < \frac{A-B}{t^{1/2}} \right)\le 12e ^ {-c\min(\xi ^ 2, \xi)}.
        \]
       Now we derive a lower bound on $(A-B)/t^{1/2}$. We have
       \[
	       \begin{aligned}
	               & \frac{A-B}{t^{1/2}} \ge \frac{A^2-B^2}{2t^{1/2}A} = \frac{\{(n-s)t - 2\xi M\}|\beta|}{2t^{1/2}\{t(n+s^*-2s+t) + \xi M \}^{1/2}}.
	       \end{aligned}
       \]
       If we have $n\ge 2s$, choosing $\xi = \xi_0 := (n - s)t / (4M)$ then yields that
       \[
       		\frac{A-B}{t^{1/2}} \ge \frac{n ^ {1 / 2}|\beta|}{24}.
       \]
       Therefore,
       \[
               \PP\left(  \frac{ \|\bP_{\cS^\ddagger|\cS}\bmu^*\|_2 -   \|\bP_{\cS|\cS^\ddagger}\bmu^*\|_2}{t^{1/2}} < \frac{n^{1/2}|\beta|}{24} \right)\le 12e ^ {-c\min(\xi_0 ^ 2, \xi_0)}.
       \]
        Define $\mathbb{F}(\cS):=\{\cS^{\ddagger} \in\AA^*(s): \cS\cap\cS^* \subset \cS^{\ddagger} \}$. Applying a union bound over $\mathbb{F}(\cS)$ yields that
        \[
               \PP\left(\psm(\cS)< \frac{n^{1/2}|\beta|}{24} \right)\le 12|\mathbb{F}(\cS)|e ^ {-c\min(\xi_0 ^ 2, \xi_0)}.
       \]
       According to Stirling's formula and the ultra-high dimension assumption, we have
       \[
       	|\mathbb{F}(\cS)|=\binom{s^*-s+t}{t} \lesssim \exp s^* \lesssim p
       \]
       and
       \[
       |\AA_t(s)| = \binom{p-s^*}{t}\binom{s^*}{s-t} \lesssim p^{t+1}.
       \]
A union bound over $\AA_t(s)$ yields that there exists a universal constant $C_1>0$ such that
       \[
	       \begin{aligned}
    	         \PP & \biggl( \exists \cS\in\AA_t(s)~\text{ s.t. }~ \psm(\cS) < \frac{n^{1/2}}{24}|\beta| \biggr) \le 12|\AA_t(s)|\cdot|\mathbb{F}(\cS)|e ^ {-c\min(\xi_0 ^ 2, \xi_0)} \\
	         & \le12C_1p^{t+2}e^{-c\min(\xi_0, \xi_0 ^ 2)}   \le 12C_1p^{3t}e^{-c\min(\xi_0, \xi_0 ^ 2)} = 12 C_1 e^{-\{c\min(\xi_0,\xi_0^2) - 3t\log p\}}.
    	   \end{aligned}
       \]
       Note that 
       \[
            \xi_0 = \frac{(n-s)t}{4M} \ge \frac{nt}{8M} = \frac{(nt)^{1/2}}{8C_0{s ^ * } ^ {1 / 2}}. 
       \]
       Assume that  $ n \ge \kappa s^* s (\log p)^2$ for some $\kappa >64C_0^2$. Then we have $\xi_0 \ge \kappa^{1/2}(8 C_0)^{-1}t \log p > 1$ and 
         \[
	       \begin{aligned}
    	         \PP & \biggl( \exists \cS\in\AA_t(s) ~ \text{ s.t. }  ~ \psm(\cS) < \frac{n^{1/2}}{24}|\beta| \biggr)\le 12 C_1 p^{-(c\kappa^{1/2}(8C_0)^{-1} - 3)t}.
    	   \end{aligned}
       \]
       Further applying a union bound over $t\in[s]$ yields that
       \[
              \PP \biggl(\psm_*(s) \ge \frac{n^{1/2}}{24}|\beta| \biggr)\ge 1 -  12 C_1 sp^{-(c\kappa^{1/2}C_0^{-1} - 3)}.
       \]
       Finally, let $C = \max\{12C_1, 8C_0/3, 8C_0/c, 2 ^ {1 / 2} / 3\}$. Then the conclusion follows for any $\kappa > 9 C^{2}$.

\subsection{Proof of Theorem \ref{thm:scrbss}}
By Theorem \ref{thm1}, we know that whenever
	\beq
		\psm_*(s) \ge \frac{8 \xi \sigma (\log p)^{1/2}}{1-\eta}, 
	\eeq	
	it holds that 
	\beq
		\PP\Bigl\{\fdp(\wh \cS) = 0,~\forall \wh\mS \in \mathbb{S}(s,\eta) \Bigr\} \ge 1-Csp^{-2(C^{-2}\xi^2-1)}.
	\eeq
	Consider the event $E_1 = \{ \fdp(\wh \cS) = 0,~\forall \wh\mS \in \mathbb{S}(s,\eta)\} =  \{\cS \subset \cS^*, \forall\cS \in \SS(s,\eta)\} $. The event indicates that $\cL_*$ is obtained within $\cS^*$. Consider the sure screening event $\cE = \{\cS^*\subset \wt{\cS}\}$.   If $E_1\cap \cE$ holds, we have $\wt{\cL}_* = \cL_*$ and thus
	\[
	     \wt{\SS}(s,\eta,\wt{\cS}) = \SS(s,\eta) ~~~\text{and} ~~~ \cS\subset \cS^*, \forall \cS\in \wt{\SS}(s,\eta,\wt{\cS}).
	\]
	So
	\[
	\begin{aligned}
	   \PP\bigl\{\fdp(\wh{\cS}   ) = 0 , \forall \wh{\cS} \in \wt{\SS}(s,\eta,\wt{\cS})
		\bigr\} & \ge \PP (E_1\cap \cE ) 
		 = 1 - \PP(E_1^c \cup \cE^c) \\
		& \ge 1 - \PP(E_1^c) - \PP(\cE^c)  = \PP(E_1) - \PP(\cE^c) \\
		& \ge  1-Csp^{-2(C^{-2}\xi^2-1)} - \PP(\cE^c).
	\end{aligned}
	\]

\section{Technical lemmas}
\begin{lemma}
	\label{lem:projected_gaussian}
	Consider $n$ independent and identically distributed observations $(Y_i, \bx_i)_{i \in [n]}$ of $(Y, \bx) \sim \cN(\bzero, \bSigma)$, where $Y$ is valued in $\RR$ and $\bx$ is valued in $\RR ^ p$. Write $\bX = (\bx_1, \ldots, \bx_n) ^ {\top}$ and $\by = (y_1, \ldots, y_n) ^ {\top}$. Define $\nu_1 := \bSigma_{Y\bx}\bSigma_{\bx\bx} ^ {-1} \bSigma_{\bx Y}$ and $\nu_2 := \Sigma_{YY} - \nu_1$. Then we have that
	\[
		\begin{aligned}
			\PP\biggl[\bigl|\by ^ {\top}\bP_{\bX}\by - \{n\nu_1 + p \nu_2\}\bigr| > \bigl\{3\nu_1n^{1 / 2} + 6(\nu_1\nu_2n) ^ {1 / 2} & + 3\nu_2p ^ {1 /2 }\bigr\}\xi\biggr] \le 6e ^ {-c\min(\xi ^ 2, \xi)}, 
		\end{aligned}
	\]
	where $c$ is a universal constant. 
\end{lemma}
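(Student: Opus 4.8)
The plan is to exploit the Gaussian regression decomposition of $Y$ on $\bx$ so that $\by^\top\bP_{\bX}\by$ splits into three pieces, each of which concentrates by a standard inequality. Set $\bbeta_0 := \bSigma_{\bx\bx}^{-1}\bSigma_{\bx Y}$ and $\epsilon := Y - \bx^\top\bbeta_0$. Joint Gaussianity of $(Y,\bx)$ makes $\epsilon$ Gaussian with $\E\epsilon = 0$, $\var(\epsilon) = \Sigma_{YY} - \bSigma_{Y\bx}\bSigma_{\bx\bx}^{-1}\bSigma_{\bx Y} = \nu_2$, and $\cov(\epsilon,\bx) = \bSigma_{Y\bx} - \bbeta_0^\top\bSigma_{\bx\bx} = \bzero$, so $\epsilon \perp \bx$; moreover $\var(\bx^\top\bbeta_0) = \bbeta_0^\top\bSigma_{\bx\bx}\bbeta_0 = \nu_1$. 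In matrix form this reads $\by = \bX\bbeta_0 + \bepsilon$ with $\bepsilon \sim \cN(\bzero,\nu_2\bI_n)$ independent of $\bX$. Using $\bP_{\bX}\bX = \bX$, I would expand
\[
\by^\top\bP_{\bX}\by = \ltwonorm{\bX\bbeta_0}^2 + 2\bbeta_0^\top\bX^\top\bepsilon + \bepsilon^\top\bP_{\bX}\bepsilon,
\]
and analyze the three summands in turn.

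For the centering, $\E\ltwonorm{\bX\bbeta_0}^2 = n\,\bbeta_0^\top\bSigma_{\bx\bx}\bbeta_0 = n\nu_1$, the cross term has mean $0$ by independence of $\bX$ and $\bepsilon$, and—assuming $n\ge p$ and $\bSigma_{\bx\bx}\succ 0$, so that $\bP_{\bX}$ is a rank-$p$ projection almost surely (as holds in the applications, where $p$ plays the role of $t\le s<n$)—we have $\E[\bepsilon^\top\bP_{\bX}\bepsilon\mid\bX] = \nu_2\Tr(\bP_{\bX}) = p\nu_2$. Summing recovers the asserted center $n\nu_1 + p\nu_2$.

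Next I would establish concentration of each piece. First, $\ltwonorm{\bX\bbeta_0}^2 = \nu_1\sum_{i=1}^n(\bx_i^\top\bbeta_0/\nu_1^{1/2})^2$ is $\nu_1$ times a $\chi^2_n$ variate, so the sub-exponential $\chi^2$ tail gives $\PP(|\ltwonorm{\bX\bbeta_0}^2 - n\nu_1| > 3\nu_1 n^{1/2}\xi) \le 2e^{-c\min(\xi^2,n^{1/2}\xi)}$. Second, conditionally on $\bX$, $\bepsilon^\top\bP_{\bX}\bepsilon$ equals $\nu_2\chi^2_p$ in distribution, and Hanson--Wright (Theorem 1.1 of \cite{rudelson2013hanson}, with $\|\bP_{\bX}\|_{\mathrm{F}}^2 = p$ and $\opnorm{\bP_{\bX}} = 1$) yields deviation $3\nu_2 p^{1/2}\xi$ with probability $\le 2e^{-c\min(\xi^2,p^{1/2}\xi)}$; crucially this bound is free of the realization of $\bX$ and survives integration. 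Third, the cross term is $2\sum_{i=1}^n a_i\epsilon_i$ with $a_i := \bx_i^\top\bbeta_0$, where the $2n$ variables $\{a_i\}\cup\{\epsilon_i\}$ are mutually independent centered Gaussians with $\|a_i\|_{\psi_2}\lesssim\nu_1^{1/2}$ and $\|\epsilon_i\|_{\psi_2}\lesssim\nu_2^{1/2}$, so each product is centered sub-exponential with $\|a_i\epsilon_i\|_{\psi_1}\lesssim(\nu_1\nu_2)^{1/2}$, and Bernstein's inequality gives deviation $6(\nu_1\nu_2 n)^{1/2}\xi$ with probability $\le 2e^{-c\min(\xi^2,n^{1/2}\xi)}$. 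Since $\min(\xi^2,N^{1/2}\xi)\ge\min(\xi^2,\xi)$ for $N\ge 1$, each tail weakens uniformly to $2e^{-c\min(\xi^2,\xi)}$, and a union bound over the three events produces the factor $6$ and the claimed inequality.

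The three tail estimates are routine; the points requiring care are the variance bookkeeping that pins the centering exactly to $n\nu_1+p\nu_2$ and the verification $\epsilon\perp\bx$. The one genuine obstacle is the cross term: conditioning on $\bX$ would couple its Gaussian tail to the fluctuating $\ltwonorm{\bX\bbeta_0}$ and force an awkward intersection with the event controlling $\ltwonorm{\bX\bbeta_0}^2$. I would avoid this by treating the cross term directly as a sum of independent Gaussian products, so that a single Bernstein bound applies cleanly and decouples it from the first summand.
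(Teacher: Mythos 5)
Your proposal is correct and follows essentially the same route as the paper's proof: the same Gaussian regression decomposition $\by = \bX\bbeta_0 + \bepsilon$ with $\bepsilon$ independent of $\bX$, the same three-term expansion of $\by^\top\bP_{\bX}\by$, sub-exponential (Bernstein/$\chi^2$/Hanson--Wright) concentration for each term, and a union bound yielding the factor $6$. The only cosmetic differences are that the paper invokes Bernstein's inequality for the quadratic term $\bepsilon^\top\bP_{\bX}\bepsilon$ where you cite Hanson--Wright, and your explicit remarks on the rank-$p$ assumption and on why the cross term should not be conditioned on $\bX$ make the bookkeeping slightly more careful than the paper's.
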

\begin{proof}[Proof of Lemma \ref{lem:projected_gaussian}]
	Regressing $Y$ on $\bx$ yields that 
	\[
		Y = \bx ^ {\top}\btheta + Z, 
	\]
	where $\btheta := \bSigma_{\bx\bx} ^ {-1}\bSigma_{\bx Y}$, and where $Z$ is independent of $\bx$. Some algebra yields that 
	\[
		\nu_1 := \var(\bx ^ \top \btheta) = \bSigma_{Y\bx}\bSigma_{\bx\bx} ^ {-1} \bSigma_{\bx Y}
	\]
	and that 
	\[
		\nu_2 := \var(Z) = \bSigma_{YY} - \bSigma_{Y\bx}\bSigma_{\bx\bx} ^ {-1} \bSigma_{\bx Y}. 
	\]
	Now consider
	\[
		\by ^ {\top}\bP_{\bX}\by = \btheta ^ {\top}\bX ^ {\top}\bX\btheta + 2\bz ^ {\top}\bX\btheta + \bz ^ {\top} \bP_{\bX} \bz. 
	\]
	We bound the three terms on the right-hand side one by one. Note that 
	\[
		\|(\btheta ^ {\top} \bx) ^ 2\|_{\psi_1} \le 3\nu_1, ~\|(\ba ^ \top \bz) ^ 2\|_{\psi_1} \le 3 \nu_2, \forall \ba \in \cS ^ {n - 1}~\text{and}~\|Z(\btheta ^{\top} \bx)\|_{\psi_1} \le 6(\nu_1\nu_2) ^ {1 / 2}. 
	\]
	Given that $\bz$ is independent of $\bX$, applying Bernstein's inequality yields that for any $\xi > 0$, 
	\[
		\PP\biggl(\biggl|\frac{\ltwonorm{\bX\btheta} ^ 2}{n} - \nu_1 \biggr| \ge \xi \biggr) \le 2\exp\bigl[-cn\min((3\nu_1) ^ {-2}\xi ^ 2, (3\nu_1) ^ {-1}\xi)\bigr], 
	\]
	\[
		\PP\biggl(\biggl|\frac{\bz ^ {\top} \bX\btheta}{n} \biggr| \ge \xi \biggr) \le 2\exp\biggl[-cn\min\biggl\{\frac{\xi ^ 2}{36 \nu_1\nu_2}, \frac{\xi}{6(\nu_1\nu_2) ^ {1 /2 }}\biggr\}\biggr]
	\]
	and
	\[
		\PP\biggl(\biggl|\frac{\ltwonorm{\bz ^ {\top} \bP_{\bX} \bz} ^ 2}{ p} - \nu_2 \biggr| \ge \xi \biggr) \le 2\exp\bigl[-c p\min\{(3\nu_2) ^ {-2}\xi ^ 2, (3\nu_2) ^ {-1}\xi)\}\bigr], 
	\]	
	where $c$ is a universal constant. Combining the three bounds above yields the conclusion. 	
\end{proof}

\begin{lemma}
	\label{lem:var_of_max}
	Given two random variables $X_1$ and $X_2$ valued in $\RR$, $\var\{\max(X_1, X_2)\} \leq \var(X_1)+\var(X_2)$.
\end{lemma}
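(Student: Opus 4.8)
The plan is to exploit the symmetrization identity for variance together with the fact that the map $(a_1,a_2)\mapsto\max(a_1,a_2)$ is $1$-Lipschitz in the sup-norm. We may assume that both $\var(X_1)$ and $\var(X_2)$ are finite, since otherwise the right-hand side is infinite and the inequality holds trivially; note that finiteness of the two variances forces $\max(X_1,X_2)\in L^2$ because $|\max(X_1,X_2)|\le|X_1|+|X_2|$.

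First I would recall the standard symmetrization identity: if $X'$ denotes an independent copy of a square-integrable random variable $X$, then $\var(X)=\tfrac12\E[(X-X')^2]$, which follows by expanding the square and using $\E[XX']=(\E X)^2$. The key move is to apply this not to $X_1$ and $X_2$ separately but to the pair jointly: let $(X_1',X_2')$ be an independent copy of $(X_1,X_2)$, and write $M:=\max(X_1,X_2)$ and $M':=\max(X_1',X_2')$, so that $\var(M)=\tfrac12\E[(M-M')^2]$.

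Next I would establish the pointwise Lipschitz bound $|\max(a_1,a_2)-\max(b_1,b_2)|\le\max(|a_1-b_1|,|a_2-b_2|)$ valid for all reals. This is a one-line argument: if $\max(a_1,a_2)$ is the larger of the two maxima and equals $a_i$, then $0\le\max(a_1,a_2)-\max(b_1,b_2)\le a_i-b_i\le|a_i-b_i|$, and the symmetric case is identical. Squaring and bounding the maximum of two nonnegative numbers by their sum yields the pointwise inequality $(M-M')^2\le(X_1-X_1')^2+(X_2-X_2')^2$.

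Finally I would take expectations and apply the symmetrization identity coordinatewise: $\E[(M-M')^2]\le\E[(X_1-X_1')^2]+\E[(X_2-X_2')^2]=2\var(X_1)+2\var(X_2)$, so that $\var(M)=\tfrac12\E[(M-M')^2]\le\var(X_1)+\var(X_2)$. There is no serious obstacle; the only point requiring care is to couple through the \emph{joint} distribution of $(X_1,X_2)$ rather than through each marginal separately, so that the dependence induced by $\max$ is respected. The Lipschitz inequality is precisely what converts this joint coupling into the two separate coordinate contributions.
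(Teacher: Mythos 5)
Your proof is correct, but it takes a genuinely different route from the paper's. The paper's argument is a two-line algebraic computation: it writes $\max(X_1,X_2)=(X_1+X_2)/2+|X_1-X_2|/2$, applies the elementary bound $\var(U+V)\le 2\var(U)+2\var(V)$, and uses $\var(|Z|)\le\var(Z)$ to conclude $\var\{\max(X_1,X_2)\}\le\tfrac12\var(X_1+X_2)+\tfrac12\var(X_1-X_2)=\var(X_1)+\var(X_2)$. You instead use the symmetrization identity $\var(M)=\tfrac12\E[(M-M')^2]$ with an independent copy of the \emph{joint} pair, combined with the coordinatewise Lipschitz bound $|\max(a_1,a_2)-\max(b_1,b_2)|\le\max(|a_1-b_1|,|a_2-b_2|)$ and $\max(u,v)^2\le u^2+v^2$. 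Every step you give checks out, including the correct insistence on coupling through the joint law rather than the marginals. What your approach buys is generality: the same argument gives $\var(\max_{i\le k}X_i)\le\sum_{i\le k}\var(X_i)$ for any $k$, and indeed works for any functional that is $1$-Lipschitz in each coordinate in the sense you use (this is the Efron--Stein flavor of argument). What the paper's approach buys is brevity and self-containedness --- it needs no independent copy and no Lipschitz lemma, only the identity for $\max$ of two reals and two standard variance inequalities. For the purposes of this paper (the lemma is stated only for two variables and is used as a minor technical tool), the paper's route is the more economical one, but yours is equally valid.
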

\begin{proof}
	$\var\{\max(X_1, X_2)\} =\var\{(X_1+X_2)/2+|X_1-X_2|/2 \}\leq \frac{1}{2}\var(X_1+X_2)+\frac{1}{2}\var(X_1-X_2)=\var(X_1)+\var(X_2)$.
\end{proof}

\begin{lemma}
	\label{lem:var_sup_process}
(\cite{van2014probability}, Lemma 6.12). Let $\{X_t \}_{t\in T}$ be a separable Gaussian process. Then sup$_{t\in T}X_t$ is sup$_{t\in T}$Var$(X_t)$-subgaussian.
\end{lemma}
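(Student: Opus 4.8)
The plan is to reduce the statement to the Gaussian concentration inequality for Lipschitz functions of a standard Gaussian vector, which is the real engine behind the bound. Throughout write $\sigma^2 := \sup_{t\in T}\var(X_t)$ and recall that ``$\sigma^2$-subgaussian'' here refers to the \emph{centered} variable $\sup_{t\in T}X_t - \E\sup_{t\in T}X_t$; the content of the lemma is precisely that the fluctuations of the supremum are governed by the largest \emph{single} variance $\sigma^2$, rather than by the (typically much larger) variance of the supremum itself. I also take as implicit in the hypothesis that $\sup_{t\in T}X_t$ is integrable, which for a separable Gaussian process is equivalent to its a.s.\ finiteness and without which the conclusion is vacuous.

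First I would pass from the general index set to a finite one. By separability there is a countable dense $T_0=\{t_1,t_2,\dots\}\subset T$ with $\sup_{t\in T}X_t=\sup_{i\ge 1}X_{t_i}$ almost surely, so the partial maxima $M_k:=\max_{1\le i\le k}X_{t_i}$ satisfy $M_k\uparrow \sup_{t\in T}X_t$. Since the variance proxy $\sigma^2$ does not depend on $k$, it suffices to prove that each $M_k$ is $\sigma^2$-subgaussian and then pass to the limit: granting $\E\sup_{t}X_t<\infty$, monotone convergence gives $\E M_k\to \E\sup_t X_t$, and the moment generating function bounds $\E\exp\{\lambda(M_k-\E M_k)\}\le \exp(\lambda^2\sigma^2/2)$ transfer to the limit by monotone convergence (for $\lambda>0$) and dominated convergence against $\exp(\lambda M_1)$ (for $\lambda<0$). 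Hence I may assume $T=\{t_1,\dots,t_n\}$ is finite.

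For finite $T$ I would give the Gaussian vector $(X_{t_1},\dots,X_{t_n})$ a linear representation. Writing its covariance as $\Sigma=AA^\top$ with rows $\ba_i^\top$ of $A$, we have $X_{t_i}=\mu_i+\ba_i^\top\bZ$ for $\bZ\sim\cN(\bzero,\bI_m)$, where $\ltwonorm{\ba_i}^2=\var(X_{t_i})\le\sigma^2$. Then $\max_i X_{t_i}=f(\bZ)$ with $f(\bz):=\max_i(\mu_i+\ba_i^\top\bz)$, and $f$ is $\sigma$-Lipschitz since $|f(\bz)-f(\bz')|\le \max_i|\ba_i^\top(\bz-\bz')|\le \sigma\ltwonorm{\bz-\bz'}$. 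Invoking the Gaussian concentration (Borell--Sudakov--Tsirelson) inequality --- every $L$-Lipschitz $f$ obeys $\E\exp\{\lambda(f(\bZ)-\E f(\bZ))\}\le \exp(\lambda^2L^2/2)$ --- with $L=\sigma$ then yields the finite-case claim and, via the previous paragraph, the full statement.

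The main obstacle is the Gaussian concentration inequality in the last step, which carries all the genuine difficulty; the two reductions are routine. If one wishes to avoid citing it, it can be established self-contained either through the Gaussian logarithmic Sobolev inequality combined with Herbst's argument (which delivers the sharp proxy $L^2$), or through a Gaussian interpolation/smart-path computation that bounds $\E\exp\{\lambda(f(\bZ)-f(\tilde\bZ))\}$ for an independent copy $\tilde\bZ$ and controls the interpolated derivative using $\ltwonorm{\nabla f}\le L$. A minor secondary point is that $f$ is Lipschitz but not smooth; this is handled by mollifying $f$ before differentiating, since the resulting bound is stable under uniform approximation.
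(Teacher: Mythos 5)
Your proof is correct. Note first that the paper does not actually prove this lemma---it is quoted verbatim from van Handel's lecture notes (Lemma~6.12 there, i.e.\ the Borell--Tsirelson--Ibragimov--Sudakov inequality) and used as a black box in the proofs of Theorems~2.2 and~2.3---so there is no in-paper argument to compare against. What you have written is the standard proof of that cited result: the separability reduction to finite index sets is handled properly (including the correct split into monotone convergence for $\lambda>0$ and domination by $\exp(\lambda M_1)$ for $\lambda<0$), the factorization $\Sigma=AA^\top$ with $\ltwonorm{\ba_i}^2=\var(X_{t_i})\le\sigma^2$ correctly exhibits the finite maximum as a $\sigma$-Lipschitz function of a standard Gaussian vector, and the final appeal to Gaussian concentration for Lipschitz functions delivers the claimed variance proxy $\sup_t\var(X_t)$ rather than the variance of the supremum. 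Your two caveats are also the right ones to flag: the implicit integrability of $\sup_t X_t$ (without which the centered statement is vacuous) and the non-smoothness of the max, which mollification or a direct argument for maxima of affine functions handles. The only residual dependence is on the Gaussian concentration inequality itself, which you correctly identify as carrying all the difficulty and for which you indicate two legitimate self-contained derivations (log-Sobolev plus Herbst, or Gaussian interpolation); either would close the argument completely.
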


\end{appendix}

\end{document}